\providecommand{\U}[1]{\protect\rule{.1in}{.1in}}
\newtheorem{theorem}{Theorem}[section]
\newtheorem*{acknowledgement*}{Acknowledgement}
\newtheorem{corollary}[theorem]{Corollary}
\newtheorem{definition}[theorem]{Definition}
\newtheorem{example}[theorem]{Example}
\newtheorem{lemma}[theorem]{Lemma}
\newtheorem{proposition}[theorem]{Proposition}
\newtheorem{remark}[theorem]{Remark}
\newtheorem*{thmA}{Theorem \ref{LfStabfMin}}
\newtheorem*{thmB}{Theorem \ref{fIndFinfMin}}
\newtheorem*{thmC}{Theorem \ref{mainThTopo}}
\newtheorem*{coroD}{Corollary \ref{mainCoroTopo}}
\begin{document}
\title[$f$--minimal hypersurfaces]{Stability properties and topology at infinity of $f$--minimal hypersurfaces}

\begin{abstract}
We study stability properties of $f$--minimal hypersurfaces isometrically immersed in weighted manifolds with 
non--negative Bakry--\'Emery Ricci curvature under volume growth conditions. Moreover, exploiting a weighted version 
of a finiteness result and the adaptation to this setting of Li--Tam theory, we investigate the topology at 
infinity of $f$--minimal hypersurfaces. On the way, we prove a new comparison result in weighted geometry and we 
provide a general weighted $L^1$--Sobolev inequality for hypersurfaces in Cartan--Hadamard weighted manifolds, 
satisfying suitable restrictions on the weight function. 
\end{abstract}

\subjclass[2010]{53C42, 53C21}
\keywords{$f$-minimal hypersurfaces, weighted manifolds, stability, finite index, topology at infinity}

\date{\today}

\author {Debora Impera}
\address{Dipartimento di Matematica e Applicazioni\\
Universit\`a degli Studi di Milano Bicocca\\
via Cozzi 55\\
I-20125 Milano, ITALY}
\email{debora.impera@gmail.com}
\author {Michele Rimoldi}
\address{Dipartimento di Matematica e Applicazioni\\
Universit\`a degli Studi di Milano Bicocca\\
via Cozzi 55\\
I-20125 Milano, ITALY}
\email{michele.rimoldi@gmail.com}
\maketitle
\tableofcontents

\section{Introduction}

Many problems in geometric analysis lead to consider Riemannian manifolds endowed with a measure that has a smooth 
positive density with respect to the Riemannian one. This turns out to be compatible with the metric structure of 
the manifold and the resulting spaces take the name of weighted manifolds, also known in the literature as manifolds 
with density. Weighted manifolds first arose in the study of diffusion processes on manifolds in works of D. Bakry 
and M. \'Emery, \cite{BE}, and were intensively studied in recent years; see e.g. the seminal works of F. Morgan, 
\cite{Mo-notices}, and G. Wei, W. Wylie, \cite{WW}. A weighted manifold is a triple 
$M^m_f=(M^m, \left\langle \,,\,\right\rangle, e^{-f}d\mathrm{vol}_M)$, where $(M^m, \left\langle \,,\,\right\rangle)$ 
is a complete $m$--dimensional Riemannian manifold, $f\in C^{\infty}(M)$ and $d\mathrm{vol}_M$ denotes the canonical 
Riemannian volume form on $M$. The geometry of weighted manifolds is visible in the weighted metric structure, i.e., in the weighted 
measure of (intrinsinc) metric objects, and it is controlled by suitable concepts of curvature adapted to the density of the measure. In \cite{BE} (see also \cite{Lic}), it was introduced an important generalization of Ricci curvature in this setting, known as  Bakry--\'Emery Ricci tensor and defined as
\[
\ \mathrm{Ric}_f=\mathrm{Ric}+\mathrm{Hess}(f).
\]
Following M. Gromov, \cite{Gro}, if we consider an isometrically immersed orientable hypersurface 
$\Sigma^m$ in the weighted manifold $M_f$, we can also define a generalization of the mean curvature vector 
field as
\[
\ \mathbf{H}_f=\mathbf{H}+(\overline{\nabla}f)^{\bot}.
\]
Here we have denoted by $\mathbf{H}$ the mean curvature vector field of the immersion, by $\overline{\nabla}$ the 
Levi--Civita connection on $M$, and by $(\cdot)^{\bot}$ the projection on the normal bundle of $\Sigma$.

It is a well--known fact that minimal hypersurfaces arise as critical points of the area functional. Since the weighted structure on $M$ induces a weighted structure on $\Sigma$ we can consider the variational problem for the weighted area functional
\[
\ \mathrm{vol}_{f}(\Sigma)=\int_{\Sigma}e^{-f}d\mathrm{vol}_{\Sigma}. 
\]
From variational formulae, \cite{Bay}, one can see that $\Sigma$ is $f$--minimal, namely a critical point of the 
weighted area functional, if and only if $\mathbf{H}_f$ vanishes identically.

Clearly, minimal hypersurfaces are a particular case of $f$--minimal hypersurfaces, corresponding to the case $f\equiv const.$
Moreover, as we shall see more in details in Section \ref{DefEx}, self--shrinkers of the mean curvature flow are important examples of $f$--minimal hypersurfaces in the Euclidean space with the Gaussian density $e^{-|x|^2/2}$. This, on one hand, gives a motivation to the study of $f$--minimal hypersurfaces and, on the other hand, strongly suggests to study self--shrinkers in the realm of weighted manifolds; this is the point of view adopted in \cite{Rim}, \cite{PR2}.

The research on $f$--minimal hypersurfaces has just started and it has been already approached by many authors; 
see e.g. \cite{Fan}, \cite{Ho}, \cite{Si}, \cite{ChMeZh1}, \cite{ChMeZh2}, \cite{L}, \cite{Esp}, \cite{SYu}. Much effort has 
been devoted to the study of the stability properties. As we will see later on, the stability properties of 
$f$--minimal hypersurfaces are taken into account by spectral properties of the following weighted Jacobi operator
\[
\ L_f=-\Delta_f-\left(|\mathbf{A}|^2+\overline{\mathrm{Ric}}_{f}(\nu,\nu)\right),
\]
where $\mathbf{A}$ denotes the second fundamental form of the immersion, $\overline{\mathrm{Ric}}_{f}$ denotes the Bakry--\'Emery Ricci tensor of the ambient space, and $\Delta_f=\Delta-\left\langle \nabla f, \nabla\,\cdot\right\rangle$ is the $f$--laplacian on $\Sigma_f$. Roughly speaking (for more details see Section \ref{SectStab} below) we say that an $f$--minimal hypersurface is $L_f$--stable if it minimizes the weighted area functional. The most up to date result, proved by X. Cheng, T. Mejia, and D. Zhou, \cite{ChMeZh1}, states that there exist no $L_f$--stable complete $f$-- minimal hypersurfaces $\Sigma$ immersed in a complete weighted manifold $M_f$ with $\overline{\mathrm{Ric}}_f\geq k>0$, provided $\mathrm{vol}_f(\Sigma)<+\infty$. Note that, by the equivalences obtained in \cite{ChZh}, in the special case of self--shrinkers this conclusion was originally pointed out by T. Colding and W. Minicozzi in \cite{CoMi}.

In the first part of the paper, we are able to generalize the result in \cite{ChMeZh1}, considering 
progressively weaker growth conditions on the intrinsic weighted volume growth of geodesic balls. Recall that, if $B_{r}\left(  o\right)  $ and $\partial
B_{r}\left(  o\right)  $ denote respectively the metric ball and the metric sphere of $\Sigma$ of radius $r>0$ and centered
at $o\in \Sigma$, we define%
\[
\mathrm{vol}_{f}\left(  B_{r}\left(  o\right)  \right)  =\int_{B_{r}\left(
o\right)  }e^{-f}d\mathrm{vol}_\Sigma,\qquad\mathrm{vol}_{f}\left(  \partial
B_{r}\left(  o\right)  \right)  =\int_{\partial B_{r}\left(  o\right)  }%
e^{-f}d\mathrm{vol}_{m-1},
\]
where $d\mathrm{vol}_{m-1}$ stands for the $\left(  m-1\right)$--Hausdorff measure. We then prove the following theorem.
\begin{thmA}
Let $M_f$ be a complete weighted manifold with $\overline{\mathrm{Ric}}_f\geq k>0$. Then there is no 
$L_f$--stable complete non--compact $f$--minimal hypersurface $\Sigma$ immersed in $M_f$ provided $\mathrm{vol}_f(B_r(o))=O(\mathrm{e}^{\alpha r})$ as $r\rightarrow+\infty$, with $\alpha<2\sqrt{k}$.
\end{thmA}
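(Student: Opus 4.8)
The plan is to combine a lower bound for the bottom of the weighted spectrum of $\Sigma$, forced by $L_f$--stability, with an incompatible upper bound coming from the volume growth hypothesis, and to observe that the threshold $\alpha<2\sqrt{k}$ is exactly what makes the two bounds collide.

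First I would translate $L_f$--stability into a weighted Poincar\'e inequality. By definition, $L_f$--stability means
\[
\int_\Sigma\varphi\,L_f\varphi\,e^{-f}\,d\mathrm{vol}_\Sigma\ge 0\qquad\text{for all }\varphi\in C^\infty_c(\Sigma),
\]
and integrating by parts against the weight (using that $\Delta_f$ is self--adjoint on $L^2(e^{-f}d\mathrm{vol}_\Sigma)$, so that $\int_\Sigma\varphi(-\Delta_f\varphi)e^{-f}=\int_\Sigma|\nabla\varphi|^2e^{-f}$) this reads
\[
\int_\Sigma\bigl(|\mathbf{A}|^2+\overline{\mathrm{Ric}}_f(\nu,\nu)\bigr)\varphi^2\,e^{-f}\le\int_\Sigma|\nabla\varphi|^2\,e^{-f}.
\]
Discarding $|\mathbf{A}|^2\ge0$ and using $\overline{\mathrm{Ric}}_f(\nu,\nu)\ge k$, I obtain $k\int_\Sigma\varphi^2e^{-f}\le\int_\Sigma|\nabla\varphi|^2e^{-f}$ for every $\varphi\in C^\infty_c(\Sigma)$; that is, the bottom $\lambda_1^f(\Sigma)$ of the spectrum of $\Delta_f$ satisfies $\lambda_1^f(\Sigma)\ge k$.

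Next I would prove a weighted Brooks--type bound $\lambda_1^f(\Sigma)\le\alpha^2/4$. Let $r$ denote the intrinsic distance from a fixed $o\in\Sigma$, set $V_f(t)=\mathrm{vol}_f(B_t(o))$, and for $b>\alpha$ consider the Lipschitz test functions $\varphi_R=\eta_R\,e^{-br/2}$, where $\eta_R$ is a cut--off equal to $1$ on $B_R(o)$, vanishing outside $B_{2R}(o)$, with $|\nabla\eta_R|\le C/R$. Using $|\nabla r|=1$ a.e.\ and the layer--cake identity $\int_\Sigma g(r)\,e^{-f}=\int_0^\infty g(t)\,dV_f(t)$, the leading term of $\int_\Sigma|\nabla\varphi_R|^2e^{-f}$ is $\tfrac{b^2}{4}\int_\Sigma\eta_R^2\,e^{-br}e^{-f}$, while the denominator is $\int_\Sigma\eta_R^2\,e^{-br}e^{-f}$. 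The crucial point is that the growth hypothesis yields, by integration by parts (the boundary term $e^{-bt}V_f(t)=O(e^{(\alpha-b)t})\to0$), the finiteness $J:=\int_\Sigma e^{-br}e^{-f}<\infty$ for every $b>\alpha$, and clearly $J>0$. Hence $\int_\Sigma\eta_R^2e^{-br}e^{-f}\to J$, whereas the cross term and the $|\nabla\eta_R|^2$ term are supported in the shell $B_{2R}(o)\setminus B_R(o)$ and are bounded by $\tfrac{C}{R}\int_{B_{2R}(o)\setminus B_R(o)}e^{-br}e^{-f}$, a tail of the convergent integral $J$, hence $o(1)$ as $R\to\infty$. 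Therefore the Rayleigh quotient of $\varphi_R$ converges to $b^2/4$, so $\lambda_1^f(\Sigma)\le b^2/4$ for every $b>\alpha$, and letting $b\downarrow\alpha$ gives $\lambda_1^f(\Sigma)\le\alpha^2/4$.

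Combining the two steps yields $k\le\lambda_1^f(\Sigma)\le\alpha^2/4$, i.e.\ $\alpha\ge2\sqrt{k}$, contradicting the hypothesis $\alpha<2\sqrt{k}$; hence no such $\Sigma$ exists. I expect the genuine work to be concentrated in the second step: the exponential weight $e^{-br/2}$ is what produces the sharp constant $\tfrac14$, and thus the threshold $2\sqrt{k}$, while the delicate points are deducing the finiteness of $J$ from the mere $O(e^{\alpha r})$ control, checking that the cut--off errors are negligible for all $b>\alpha$ simultaneously, and handling the non--smoothness of $r$ (working with Lipschitz test functions and the a.e.\ identity $|\nabla r|=1$, or mollifying away from the cut locus). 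It is worth noting that this unified argument requires neither a separate treatment of the finite-- and infinite--weighted--volume cases nor any sign assumption on $f$, since only $0<J<\infty$ is used.
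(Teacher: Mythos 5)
Your proposal is correct and follows essentially the same route as the paper: both arguments pit the lower bound $\lambda_1^f(\Sigma)\geq k$ forced by $L_f$--stability against the upper bound $\lambda_1^f(\Sigma)\leq\alpha^2/4$ coming from the weighted volume growth. The only difference is that the paper simply cites the weighted Brooks inequality $\lambda_1^f(\Sigma)\leq\frac14 h_f^2(\Sigma)$ for the second bound, whereas you reprove it from scratch with the exponential test functions $\eta_R e^{-br/2}$ — a correct, self-contained substitute for the citation.
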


Furthermore, in the instability case, exploiting the oscillatory behaviour of solutions of some 
ODEs that naturally arise in this setting, we investigate general geometric restrictions for the 
finiteness of the weighted index of the $f$--minimal hypersurface, that is, the maximum dimension of 
the linear space of compactly supported deformations that decrease the weighted area up to second 
order. 

\begin{thmB}
Let $M_f$ be a complete weighted manifold with $\overline{\mathrm{Ric}}_f\geq k>0$. Then there is no complete $f$--minimal hypersurface $\Sigma$ immersed in $M_f$ with  $Ind_f(\Sigma)<+\infty$ provided one of the following conditions hold
 \begin{enumerate}
  \item $\mathrm{vol}_f(\Sigma)=+\infty$ and $\mathrm{vol}_f(B_r(o))\leq C r^a$ for any $r\geq r_0$ and some positive constants $C$, $r_0$ and $a$;
  \item  $\mathrm{vol}_f(\partial B_r)^{-1}\notin L^{1}(+\infty)$ and $|A|\notin L^{2}(\Sigma, e^{-f}d\mathrm{vol}_\Sigma)$.
\end{enumerate}

\end{thmB}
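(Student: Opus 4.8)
The plan is to prove the non-existence of finite weighted index $f$-minimal hypersurfaces under either hypothesis by a contradiction argument based on the standard fact that finite index forces $L_f$-stability outside a compact set. Indeed, if $\mathrm{Ind}_f(\Sigma)<+\infty$, then there exists a compact set $K\subset\Sigma$ such that $\Sigma\setminus K$ is $L_f$-stable, meaning that for every compactly supported Lipschitz function $\varphi$ with $\mathrm{supp}\,\varphi\subset\Sigma\setminus K$ one has the weighted stability inequality
\[
\int_\Sigma\left(|\mathbf{A}|^2+\overline{\mathrm{Ric}}_f(\nu,\nu)\right)\varphi^2\,e^{-f}d\mathrm{vol}_\Sigma\leq\int_\Sigma|\nabla\varphi|^2\,e^{-f}d\mathrm{vol}_\Sigma.
\]
Since $\overline{\mathrm{Ric}}_f\geq k>0$ and $|\mathbf{A}|^2\geq 0$, the left-hand side is bounded below by $k\int_\Sigma\varphi^2\,e^{-f}d\mathrm{vol}_\Sigma$, so the heart of the matter reduces to the purely analytic statement that no complete weighted manifold satisfying the given volume growth (case (1)) can support such an inequality with a positive constant $k$ on the complement of a compact set. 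The goal is therefore to construct, under each hypothesis, a sequence of admissible test functions $\varphi_j$ supported off $K$ that violates the inequality, i.e.\ for which the Rayleigh-type quotient $\int|\nabla\varphi_j|^2e^{-f}/\int\varphi_j^2e^{-f}$ stays below $k$.

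For case (1), where $\mathrm{vol}_f(\Sigma)=+\infty$ and $\mathrm{vol}_f(B_r(o))\leq Cr^a$, I would use the classical logarithmic cutoff trick adapted to the weighted measure. First I would fix a large $R_0$ so that $B_{R_0}\supset K$ and work on the annular regions $B_{2^{k+1}}\setminus B_{2^k}$. Using radial cutoffs $\varphi_j$ that equal $1$ on an intermediate annulus and decay logarithmically, the gradient term accumulates like $\sum_k \big(\log 2\big)^{-2}\,\mathrm{vol}_f(B_{2^{k+1}}\setminus B_{2^k})/(2^k)^2$-type contributions, and polynomial growth $Cr^a$ forces the ratio $\int|\nabla\varphi_j|^2e^{-f}/\int\varphi_j^2e^{-f}\to 0$ as $j\to\infty$; in particular it eventually drops below $k>0$. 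Here the infinite weighted volume is exactly what guarantees the denominator diverges, defeating the positive spectral gap. This contradicts $L_f$-stability at infinity and hence the finiteness of the index.

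For case (2), the hypotheses $\mathrm{vol}_f(\partial B_r)^{-1}\notin L^1(+\infty)$ and $|\mathbf{A}|\notin L^2(\Sigma,e^{-f}d\mathrm{vol}_\Sigma)$ suggest a different construction exploiting the oscillatory ODE behaviour mentioned in the paragraph preceding the theorem. The non-integrability of $\mathrm{vol}_f(\partial B_r)^{-1}$ is a weighted parabolicity-type condition that lets me build cutoffs along the level sets of the distance function whose Dirichlet energy can be made small relative to their mass; combined with the failure of $|\mathbf{A}|$ to be $L^2$, the term $\int|\mathbf{A}|^2\varphi^2e^{-f}$ can be driven to accumulate arbitrarily much weight, so that one can directly violate the stability inequality by concentrating $\varphi$ where $|\mathbf{A}|^2$ is large while keeping $\int|\nabla\varphi|^2e^{-f}$ controlled. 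The cleanest route is to test instability directly against $|\mathbf{A}|^2+\overline{\mathrm{Ric}}_f\geq|\mathbf{A}|^2+k$ rather than against $k$ alone, so that the $L^2$-unboundedness of $|\mathbf{A}|$ is what produces the contradiction.

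The main obstacle I anticipate is the careful bookkeeping of the weighted cutoff estimates: unlike the unweighted case, the measure $e^{-f}d\mathrm{vol}$ does not commute naively with the radial structure, so controlling $\int|\nabla\varphi_j|^2e^{-f}$ requires the weighted co-area formula and the precise growth hypotheses on $\mathrm{vol}_f(\partial B_r)$ and $\mathrm{vol}_f(B_r)$ rather than their Riemannian counterparts. The delicate point in case (2) is to reconcile the two hypotheses into a single test-function family for which the energy stays finite while the mass against $|\mathbf{A}|^2$ blows up; I expect this to rely on the oscillation argument for the associated radial ODE $u''+\tfrac{\mathrm{vol}_f(\partial B_r)'}{\mathrm{vol}_f(\partial B_r)}u'+ku=0$, whose solutions oscillate precisely because of the volume condition, forcing sign changes of any purported positive solution of $L_f u=0$ at infinity and thereby contradicting finite index via the standard equivalence between finite index and existence of a positive Jacobi field outside a compact set.
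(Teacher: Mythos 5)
Your overall strategy coincides with the paper's: reduce via the finite--index criterion (Devyver's weighted Fischer--Colbrie--Schoen, Proposition \ref{Baptiste}) to $\lambda_1^{L_f}(\Sigma\setminus\Omega)\geq 0$ for some compact $\Omega$, and then contradict this by producing test functions supported off $\Omega$ whose Rayleigh quotient for $-\Delta_f$ drops below $k$ (case (1)), respectively which make the full quadratic form of $L_f$ negative (case (2)). However, both of your constructions have genuine gaps in the execution. In case (1), the logarithmic cutoff on dyadic annuli does not work for arbitrary polynomial degree $a$: the energy of a log cutoff spread over $N$ dyadic annuli is of order $N^{-2}\sum_k \mathrm{vol}_f(B_{2^{k+1}}\setminus B_{2^k})4^{-k}$, and under the bound $\mathrm{vol}_f(B_r)\leq Cr^a$ with $a>2$ this sum can grow like $2^{N(a-2)}$, while the mass $\int\varphi_j^2e^{-f}$ is only guaranteed to tend to infinity (possibly very slowly, e.g.\ logarithmically). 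So the quotient need not tend to $0$, and in fact the inequality you assert is exactly the nontrivial content here. The paper instead invokes the do Carmo--Zhou oscillation theorem for the radial ODE $(v(t)x'(t))'+\lambda v(t)x(t)=0$, $v(t)=\mathrm{vol}_f(\partial B_t)$, which is oscillatory for every $\lambda>0$ precisely under ``infinite weighted volume plus polynomial growth''; the test functions are the oscillatory solutions restricted to annuli between consecutive zeros, and integration by parts gives Rayleigh quotient exactly $\lambda$. (Your sketch could be repaired by a pigeonhole selection of radii $R_j$ with $\mathrm{vol}_f(B_{2R_j})\leq c\,\mathrm{vol}_f(B_{R_j})$ and a single linear ramp, but that argument is not what you wrote.)

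In case (2), the decisive slip is the ODE you propose at the end, $u''+\tfrac{v'}{v}u'+ku=0$, whose potential is the constant $k$. Its oscillation would require $\int^{\infty} k\,v(t)\,dt=k\,\mathrm{vol}_f(\Sigma)=+\infty$ in the Leighton--Wintner criterion, but hypothesis (2) is specifically designed to cover the case $\mathrm{vol}_f(\Sigma)<+\infty$ (which automatically gives $\mathrm{vol}_f(\partial B_r)^{-1}\notin L^1(+\infty)$), where this fails. The paper's Proposition \ref{fIndBMR} uses instead the potential $Q(t)=v(t)^{-1}\int_{\partial B_t}q\,e^{-f}$ with $q=|\mathbf{A}|^2+\overline{\mathrm{Ric}}_f(\nu,\nu)$; the hypothesis $|\mathbf{A}|\notin L^2(\Sigma,e^{-f}d\mathrm{vol}_\Sigma)$ guarantees $\int Q\,v=\int_\Sigma q\,e^{-f}=+\infty$, which together with $1/v\notin L^1(+\infty)$ yields oscillation, and the co--area formula converts the resulting radial function into a test function on an annulus on which the quadratic form of $L_f$ vanishes, whence $\lambda_1^{L_f}(\Sigma\setminus\Omega)<0$ by strict domain monotonicity. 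You do state the right idea (``test against $|\mathbf{A}|^2+k$ rather than $k$ alone''), but the spherical averaging of $q$ into the ODE potential is the step that actually reconciles the two hypotheses, and it is missing from your write--up.
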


Note that this last research direction is significant also in the special case of self--shrinkers. 
We are not aware of any result in this direction up to now.

The second aim of this paper is to obtain information on the topology at infinity of $f$--minimal hypersurfaces immersed in suitable ambient spaces. We recall that, in the non--weighted setting, there is a well--known connection, developed by P. Li and L.--F. Tam and collaborators (see e.g. \cite{LT}), between the dimension of the space of $L^2$--harmonic forms, the number of non--parabolic ends, and the Morse index of the operator $-\Delta-a(x)$, where $-a(x)$ is the smallest eigenvalue of the Ricci tensor at $x$. Furthermore, following H. D. Cao, Y. Shen, S. Zhu, \cite{CSZ}, and P. Li and J. Wang, \cite{LW1}, one shows that if the manifold supports a $L^1$--Sobolev inequality outside some compact set, then all ends are non--parabolic. According to D. Hoffman and J. Spruck, \cite{HoSp}, this in particular applies to minimal submanifolds of Cartan--Hadamard manifolds.

In this order of ideas, by adapting the Li--Tam theory to the weighted setting and by means of a weighted version of an abstract finiteness result from \cite{PRS_Progress}, we are able to obtain the finiteness of the number of non--$f$--parabolic ends of a weighted manifold $M_f$, assuming the finiteness of the Morse index of the operator $-\Delta_f-a(x)$, where $-a(x)$ is now the smallest eigenvalue of $\mathrm{Ric}_f$ at $x$.

Using then the technique adopted in \cite{MiSi}, \cite{HoSp}, we are able to guarantee the validity of a weighted $L^1$--Sobolev inequality outside some compact set on $f$--minimal hypersurfaces with finite weighted index, under suitable assumptions on $f$ and on the curvature of the ambient weighted manifold. On the way we prove a comparison theorem in weighted geometry assuming an upper bound on the sectional curvature. An adaptation to the weighted setting of the results in \cite{CSZ}, \cite{LW1} finally provides the following topological result.

\begin{thmC}
Let $\Sigma^m$ be a complete $f$--minimal hypersurface isometrically immersed with $Ind_f(\Sigma)<+\infty$ in a complete weighted manifold $M_{f}^{m+1}$ with $\overline{Sect}\leq0$ and $\overline{\mathrm{Ric}}_{f}\geq k\geq0$. Suppose furthermore that $f^{*}=\sup_\Sigma f<+\infty$ and $|\overline{\nabla} f|\in L^{m}(\Sigma_f)$. Then $\Sigma$ has finitely many ends.
\end{thmC}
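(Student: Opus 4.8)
The plan is to prove the statement by combining two independent finiteness phenomena for the ends of $\Sigma$: on one hand, the finiteness of the weighted index forces the number of non--$f$--parabolic ends to be finite; on the other hand, the geometric hypotheses force \emph{every} end to be non--$f$--parabolic. Since, once parabolic ends are excluded, the total number of ends coincides with the number of non--$f$--parabolic ends, the conclusion follows at once. I would therefore organize the argument in the two halves described below and close with this elementary combination.

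For the first half I would transplant the hypothesis $Ind_f(\Sigma)<+\infty$ onto the intrinsic Schr\"odinger operator $-\Delta_f-a(x)$, where $-a(x)$ is the lowest eigenvalue of the intrinsic Bakry--Emery Ricci tensor $\mathrm{Ric}_f^{\Sigma}$. The bridge is the traced Gauss equation: writing $A$ for the shape operator and using $f$--minimality in the form $H=\langle\overline\nabla f,\nu\rangle$, one obtains for every unit $X\in T\Sigma$ an identity of the form $\mathrm{Ric}_f^{\Sigma}(X,X)=\overline{\mathrm{Ric}}_f(X,X)-\overline{\mathrm{sec}}(X,\nu)+2H\langle AX,X\rangle-|AX|^2$. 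Under $\overline{Sect}\le0$ and $\overline{\mathrm{Ric}}_f\ge k\ge0$ the first two terms are nonnegative, so $a(x)$ is controlled by $|A|^2$ together with a term comparable to $|\overline\nabla f|^2$. Feeding this control into the stability inequality satisfied by $L_f$ outside a compact set (the analytic meaning of finite index), I would deduce that $-\Delta_f-a(x)$ also has finite Morse index, the quantitative matching of the potentials being the delicate point discussed below. At this stage the weighted Li--Tam theory together with the weighted abstract finiteness result adapted from \cite{PRS_Progress}, applied to the Bochner/refined--Kato inequality satisfied by $|\nabla u|$ for $\Delta_f$--harmonic $u$, yields that $\Sigma$ possesses only finitely many non--$f$--parabolic ends.

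For the second half I would establish a weighted $L^1$--Sobolev inequality outside a compact set and then invoke the weighted analogue of \cite{CSZ,LW1}. Starting from the Michael--Simon/Hoffman--Spruck inequality \cite{MiSi,HoSp} on the Cartan--Hadamard ambient ($\overline{Sect}\le0$), which for a general immersion carries the mean--curvature term $\int\phi|\mathbf{H}|$, I would use $f$--minimality to replace $|\mathbf{H}|$ by $|\overline\nabla f|$, test with $\phi=\psi\,e^{-\frac{m-1}{m}f}$ to turn the unweighted inequality into a weighted one, and control the resulting factor $e^{f/m}$ by the bound $f\le f^{*}$. This produces $\bigl(\int\psi^{\frac{m}{m-1}}e^{-f}\bigr)^{\frac{m-1}{m}}\le C\int\bigl(|\nabla\psi|+\psi|\overline\nabla f|\bigr)e^{-f}$; a H\"older step against $|\overline\nabla f|\in L^{m}(\Sigma,e^{-f}d\mathrm{vol}_\Sigma)$ then lets me absorb the last term on the left, \emph{provided} $\mathrm{supp}\,\psi$ lies outside a compact set $K$ so large that $\int_{\Sigma\setminus K}|\overline\nabla f|^{m}e^{-f}$ falls below the absorption threshold. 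This is where the weighted comparison theorem proved earlier enters, securing the correct weighted volume normalization in the Hoffman--Spruck constant. The weighted $L^1$--Sobolev inequality on $\Sigma\setminus K$ then forces a weighted volume lower bound on each end, whence the weighted adaptation of \cite{CSZ,LW1} gives that every end is non--$f$--parabolic.

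I expect the main obstacle to be the construction of the weighted $L^1$--Sobolev inequality, because of the genuine mismatch between the density $e^{-f}$ dictated by the weighted area and the density $e^{-\frac{m-1}{m}f}$ naturally produced by the substitution; reconciling them is exactly what the one--sided bound $f\le f^{*}$ is for, and obtaining constants sharp enough to \emph{absorb} the mean--curvature/$\overline\nabla f$ contribution requires the precise $L^m$--integrability hypothesis rather than mere finiteness. A secondary delicate point is the index transplantation of the first half: the Gauss identity only bounds $a(x)$ by $|A|^2$ up to the factor $2$ and the $|\overline\nabla f|^2$ term, so finiteness of the Morse index of $-\Delta_f-a(x)$ must be extracted through the stability formulation, using if needed the already--established weighted Sobolev inequality to absorb the lower--order potential, rather than by a naive monotonicity of the index in the potential.
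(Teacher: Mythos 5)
Your overall architecture coincides with the paper's: finiteness of $\mathrm{Ind}_f(\Sigma)$ is transplanted via the Gauss equation to the operator $-\Delta_f-a(x)$ with $-a(x)$ the lowest eigenvalue of $\mathrm{Ric}_f$, the weighted finiteness theorem plus weighted Li--Tam theory bound the number of non--$f$--parabolic ends, and a weighted $L^1$--Sobolev inequality outside a compact set (obtained by absorbing the $|\overline\nabla f|$ term through H\"older against the $L^m$ hypothesis) combined with the weighted Cao--Shen--Zhu lemma shows every end is non--$f$--parabolic. Where you genuinely diverge is in how you reach the weighted Sobolev inequality: you propose to substitute $h=\psi\,e^{-\frac{m-1}{m}f}$ into the \emph{unweighted} Hoffman--Spruck inequality and control the leftover factor $e^{f/m}$ by $f\le f^*$; the paper instead reruns the whole Michael--Simon covering argument directly with the measure $e^{-f}d\mathrm{vol}_\Sigma$ (Theorem \ref{ThWeightSobL1}), using $f\le f^*$ only to verify the density condition \eqref{smallweightgeom}. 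Your substitution is shorter and, after estimating $|\nabla h|\le(|\nabla\psi|+\frac{m-1}{m}\psi|\nabla f|)e^{-\frac{m-1}{m}f}$ and $|\mathbf{H}|\le|\overline\nabla f|$ from $f$--minimality, does land on the same inequality \eqref{WeightSobL1}; the paper's route buys a statement valid under the weaker hypothesis \eqref{smallweightgeom} alone. (Your remark that the weighted Hessian comparison theorem of Section \ref{Compar} secures the Sobolev constant is not needed in either route: the paper's proof uses only the classical Hessian comparison for $\overline{\mathrm{Hess}}(r)$ under $\overline{\mathrm{Sect}}\le0$.)

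Two points in your first half need correcting, though neither is fatal. First, once $\mathbf{H}_f=0$ is used, the weighted Gauss equation closes up exactly as in \eqref{Ricfrelation}, $\mathrm{Ric}_f(X,X)=\overline{\mathrm{Ric}}_f(X,X)-\overline{\mathrm{Sect}}(X,\nu)|X|^2-\langle A^2X,X\rangle$, with no surviving $H\langle AX,X\rangle$ or $|\overline\nabla f|^2$ term: the normal part of $\overline{\mathrm{Hess}}(f)$ cancels the mean--curvature term. Hence $a(x)=|\mathbf{A}|^2-k$ is dominated pointwise by the potential $|\mathbf{A}|^2+\overline{\mathrm{Ric}}_f(\nu,\nu)$ of $L_f$, and the positive solution of $L_f\varphi=0$ outside a compact set furnished by Proposition \ref{Baptiste} already solves \eqref{ineq2} with $H=1$; the ``delicate quantitative matching'' you anticipate does not arise. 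Second, you should not invoke a refined Kato inequality for $f$--harmonic $1$--forms: as the paper notes (following \cite{RV}), no refined Kato inequality is available in the weighted setting, and the plain Kato inequality (i.e.\ $A=0$ in Theorem \ref{f-finiteness}) is what must be, and suffices to be, used.
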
 

As a consequence, adapting ideas in \cite{LiYauCMP},  we are able to obtain the following result, in which we replace the finiteness of the weighted index with the finiteness of the weighted total curvature of the $f$--minimal hypersurface.

\begin{coroD}
Let $\Sigma^m$ be a complete $f$--minimal hypersurface isometrically immersed in a complete weighted manifold $M_{f}^{m+1}$ with $\overline{Sect}\leq0$ and $\overline{\mathrm{Ric}}_{f}\geq k\geq0$. Assume that $|\mathbf{A}|\in L^{m}(\Sigma_f)$. Suppose furthermore that $f\leq f^{*}<+\infty$ and $|\overline{\nabla} f|\in L^{m}(\Sigma_f)$. Then $\Sigma$ has finitely many ends.
\end{coroD}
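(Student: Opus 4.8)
The plan is to derive Corollary D from Theorem C. The statement of Theorem C requires $\mathrm{Ind}_f(\Sigma)<+\infty$, whereas Corollary D instead assumes $|\mathbf{A}|\in L^m(\Sigma,e^{-f}d\mathrm{vol}_\Sigma)$ (finite weighted total curvature). So the content of the corollary is really the implication: finite weighted total curvature $\Longrightarrow$ finite weighted index. Once that is in hand, the topological conclusion (finitely many ends) follows immediately by invoking Theorem C, since all the other hypotheses---$\overline{Sect}\leq0$, $\overline{\mathrm{Ric}}_f\geq k\geq0$, $f\leq f^*$, and $|\overline{\nabla}f|\in L^m$---are shared verbatim between the two statements.

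Thus the heart of the argument is to show that $|\mathbf{A}|\in L^m(\Sigma,e^{-f}d\mathrm{vol}_\Sigma)$ forces $\mathrm{Ind}_f(\Sigma)<+\infty$. The weighted index is the Morse index of the weighted Jacobi operator $L_f=-\Delta_f-(|\mathbf{A}|^2+\overline{\mathrm{Ric}}_f(\nu,\nu))$, i.e.\ the dimension of the space of compactly supported sections on which the associated quadratic form is negative. Following the idea in \cite{LiYauCMP}, I would control the negative part of this form by the potential $q(x):=|\mathbf{A}|^2+\overline{\mathrm{Ric}}_f(\nu,\nu)$. First I would bound the ambient curvature term: the hypothesis $\overline{\mathrm{Ric}}_f\geq k\geq0$ and $\overline{Sect}\leq 0$ together with $f\leq f^*$ let me estimate $\overline{\mathrm{Ric}}_f(\nu,\nu)$ in terms of $|\overline{\nabla}f|^2$ and the geometry, so that $q$ is dominated (up to constants) by $|\mathbf{A}|^2+|\overline{\nabla}f|^2$ plus a bounded term. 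By hypothesis both $|\mathbf{A}|$ and $|\overline{\nabla}f|$ lie in $L^m$, so the relevant $L^{m/2}$ norm of $q_+$ over $\Sigma$ is finite.

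The key step is then to exploit the weighted $L^1$--Sobolev inequality established earlier in the paper (the same one used to prove Theorem C, valid outside a compact set under exactly the present curvature and weight assumptions). From the $L^1$--Sobolev inequality one obtains, by the standard iteration, a weighted $L^2$--Sobolev inequality with exponent $m/(m-2)$, which in turn controls the Rayleigh quotient of $L_f$ from below. Concretely, for a compactly supported $\varphi$ supported outside the compact set one writes
\[
\int_\Sigma q_+\,\varphi^2\,e^{-f}d\mathrm{vol}_\Sigma
\leq \Big(\int_{\mathrm{supp}\,\varphi} q_+^{m/2}\,e^{-f}d\mathrm{vol}_\Sigma\Big)^{2/m}
\Big(\int_\Sigma \varphi^{\frac{2m}{m-2}}\,e^{-f}d\mathrm{vol}_\Sigma\Big)^{(m-2)/m},
\]
and the Sobolev inequality bounds the last factor by $\int_\Sigma|\nabla\varphi|^2 e^{-f}d\mathrm{vol}_\Sigma$. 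Since the weighted total curvature is finite, one may choose the compact set so large that the first factor on the right is smaller than the Sobolev constant; hence the quadratic form of $L_f$ is nonnegative on all compactly supported functions supported in the exterior region. This shows the index is concentrated on a compact piece of $\Sigma$, whence $\mathrm{Ind}_f(\Sigma)<+\infty$.

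The main obstacle I anticipate is establishing cleanly that the curvature potential $\overline{\mathrm{Ric}}_f(\nu,\nu)$ is genuinely controlled in $L^{m/2}$ by the given integrability hypotheses: the Bakry--Emery term contains $\overline{\mathrm{Hess}}(f)(\nu,\nu)$, which is \emph{not} directly among the $L^m$ assumptions (only $|\overline{\nabla}f|$ is). I would need either to absorb the Hessian contribution using the sign conditions $\overline{Sect}\leq0$ and $\overline{\mathrm{Ric}}_f\geq k\geq0$---so that $\overline{\mathrm{Ric}}_f(\nu,\nu)$ is bounded below and the unfavorable sign in the Rayleigh quotient disappears---or to restrict attention to the negative part $q_+$, for which the lower bound $\overline{\mathrm{Ric}}_f\geq0$ means only $|\mathbf{A}|^2$ can make $q$ positive. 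In fact, since $\overline{\mathrm{Ric}}_f(\nu,\nu)\geq k\geq0$, one has $q\leq |\mathbf{A}|^2+C$ is \emph{false} in the wrong direction; the correct observation is that the \emph{positive} part of the potential in the unstable form is controlled purely by $|\mathbf{A}|^2$ once the nonnegative curvature is discarded, and then the $L^{m/2}$ finiteness follows solely from $|\mathbf{A}|\in L^m$. Handling this sign bookkeeping carefully, together with verifying the exact form of the iterated Sobolev inequality in the weighted setting, is where the real care is required; the passage from finite index to finitely many ends is then automatic via Theorem C.
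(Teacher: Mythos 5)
There is a genuine gap, and it sits exactly where you flagged your ``main obstacle''. You propose to prove the intermediate claim $|\mathbf{A}|\in L^{m}\Rightarrow \mathrm{Ind}_f(\Sigma)<+\infty$ and then quote Theorem \ref{mainThTopo}. But $\mathrm{Ind}_f$ is the Morse index of the Jacobi operator $L_f=-\Delta_f-q$ with $q=|\mathbf{A}|^2+\overline{\mathrm{Ric}}_f(\nu,\nu)$, and under the hypothesis $\overline{\mathrm{Ric}}_f\geq k\geq0$ this potential satisfies $q\geq|\mathbf{A}|^2$ with \emph{no upper bound} in terms of the data: nothing in the assumptions controls $\overline{\mathrm{Ric}}(\nu,\nu)+\overline{\mathrm{Hess}}(f)(\nu,\nu)$ in $L^{m/2}$. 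Your suggested fix --- ``discard the nonnegative curvature term'' --- goes in the wrong direction: for a Schr\"odinger operator $-\Delta_f-q$, enlarging the potential can only create more negative eigenvalues, so bounding the index of $-\Delta_f-|\mathbf{A}|^2$ gives a \emph{lower} bound for $\mathrm{Ind}_f(\Sigma)$, not an upper one. Consequently the Hölder--Sobolev absorption step fails for $L_f$ (you would need $\int_{\Sigma\setminus K}q^{m/2}e^{-f}$ small, and $q$ is not even known to be integrable), and the implication you rely on is not established.

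The paper avoids this entirely: the finiteness of $\mathrm{Ind}_f$ enters the proof of Theorem \ref{mainThTopo} only as a device to produce, via Proposition \ref{Baptiste}, a positive solution of $\Delta_f\varphi+a(x)\varphi\leq0$ outside a compact set for some $a\geq-\mathrm{Ric}_f$, which is what Corollary \ref{finnonfparends} actually needs. By the Gauss equation \eqref{Ricfrelation} and the sign hypotheses, $\mathrm{Ric}_f\geq k-|\mathbf{A}|^2\geq-|\mathbf{A}|^2$, so for Corollary \ref{mainCoroTopo} it suffices to bound the Morse index of $L_{\mathbf{A}}=-\Delta_f-|\mathbf{A}|^2$, whose potential \emph{is} controlled by the total curvature. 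The paper does this by a weighted Li--Yau eigenvalue estimate (Lemma \ref{LemFI} and Proposition \ref{PropFI}), giving $\mathrm{Ind}^{L_{\mathbf{A}}}(\Sigma)\leq\tilde{C}(m)\int_{\Sigma}|\mathbf{A}|^{m}e^{-f}d\mathrm{vol}_{\Sigma}<+\infty$, with the requisite weighted $L^2$--Sobolev inequality obtained from Theorem \ref{ThWeightSobL1} after absorbing the $|\overline{\nabla}f|$ term exactly as in the proof of Theorem \ref{mainThTopo}. Your Hölder--plus--Sobolev absorption argument is a perfectly reasonable alternative to the Li--Yau counting, but it must be applied to $L_{\mathbf{A}}$, not to $L_f$; once you make that substitution (and then run the Li--Tam and Cao--Shen--Zhu steps directly, rather than citing Theorem \ref{mainThTopo} as a black box), the proof closes.
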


The paper is organized as follows. In Section \ref{DefEx} we introduce some notation and provide some examples of $f$--minimal hypersurfaces. Section \ref{SectStab} is devoted to the study of stability properties of $f$--minimal hypersurfaces. Namely we analyze geometric conditions for the instability and infiniteness of the weighted index of these objects. In Section \ref{Fin&Li-Tam} we present a weighted version of an abstract finiteness result, recently obtained in \cite{PRS-RevMatIb}, and state the adapted Li--Tam theory in the weighted setting. In Section \ref{Compar} we prove a new comparison result in weighted geometry. In Section \ref{SobWeight} a proof of the weighted $L^1$--Sobolev inequality for hypersurfaces in Cartan--Hadamard manifolds is provided. We end the paper with Section \ref{TopoRes}, where we finally prove the topological Theorem \ref{mainThTopo} and Corollary \ref{mainCoroTopo}.

\section{Definitions and some examples}\label{DefEx}
Let $M_f^{m+1}=(M^{m+1},\langle\cdot\,,\,\cdot \rangle, \mathrm{e}^{-f}d\mathrm{vol}_M)$ be a weighted manifold and 
let $\Sigma^m$ be an isometrically immersed orientable hypersurface. We will denote by $\mathbf{A}$ the second 
fundamental form of the immersion $x:\Sigma^m\rightarrow M_f^{m+1}$, that is
\[
\mathbf{A}(X,Y)=(\overline{\nabla}_{X}Y)^{\bot}, 
\]
where $\overline{\nabla}$ denotes the Levi-Civita connection on $M$ and $(\cdot)^{\bot}$ denotes the projection on the normal bundle of $\Sigma$.
Denote by $\mathbf{H}=\mathrm{tr}_{\Sigma}\mathbf{A}$ the mean curvature vector field of the immersion. We define the $f$--\textit{mean 
curvature vector field} of $\Sigma$ as
\[
\mathbf{H}_f=\mathbf{H}+(\overline{\nabla}f)^{\bot}. 
\]
Hence, denoting by $\nu$ be the unit normal we define the \emph{$f$--mean curvature} $H_f$ of $\Sigma$ by $\mathbf{H}_f:=H_f\nu$
\begin{definition}
Let $x:\Sigma^m\rightarrow M_f^{m+1}$ be a connected isometrically immersed hypersurface. We say that $\Sigma$ is $f$--\textit{minimal} if $\mathbf{H}_f\equiv0$.
\end{definition}
\begin{remark}
\rm{Note that, when $f$ is constant $\mathbf{H}_f=\mathbf{H}$ and we recover the usual definition of a minimal hypersurface.}
\end{remark}

\begin{example}(Self--shrinkers)
\rm{Let $\Sigma^m$ be a complete $m$--dimensional Riemannian manifold without boundary smoothly 
immersed by $x_0:\Sigma^m\to\mathbb{R}^{m+1}$ as an hypersurface in the Euclidean 
space $\mathbb{R}^{m+1}$. We say that $\Sigma_0=x_0(\Sigma^m)$ is moved along its  mean curvature 
vector if there  is a $1$--parameter family of smooth immersions $x:\Sigma^m\times[t_{0},T)\to\mathbb{R}^{m+1}$, with  corresponding 
hypersurfaces $\Sigma_t=x(\cdot\,,\,t)(\Sigma^m)$, such that it satisfies the following mean curvature flow initial value  problem
\begin{equation}\label{MCF}
\begin{cases}
\frac{\partial}{\partial t}x(p,t)=\mathbf{H}(p,t)&\\
x(\cdot,\,t_0)=x_0,&
\end{cases} 
\end{equation}
for any $p\in \Sigma^m$, $t\in[t_{0}, T)$.
Here $\mathbf{H}(p,t)$ is the mean curvature vector field 
of the hypersurface $M_t$ at $x(p,t)$. The short time existence and uniqueness of a solution of 
\eqref{MCF} was investigated in classical works on quasilinear parabolic equations.
  
A MCF $\{\Sigma_t\}_{t<0}$ is called a  self--shrinking solution if it satifies
\[
\ \Sigma_t=\sqrt{-2t}\Sigma_{-\frac12}
\]
for all $t<0$.
For an  overview on  the role that such solutions play in the study of MCF  see e.g. the 
introduction   in \cite{CoMi}. An hypersurface is said to be a self--shrinker if it is the time $t=-\frac{1}{2}$ slice of a self--shrinking solution. Equivalently, by a self shrinker  (based at $0\in\mathbb{R}^{m+1}$) we  mean  a connected, isometrically  immersed hypersurface
$x:\Sigma^{m}\rightarrow\mathbb{R}^{m+1}$ whose mean curvature vector field  satisfies the equation 
\begin{equation}\label{SS}
x^{\bot}=-\mathbf{H} .
\end{equation}

Let $f=\frac{|x|^2}{2}$ and consider the Gaussian space $\mathbb{R}^{m+1}_f$, 
which is the Euclidean space endowed with the 
canonical metric and the measure $\mathrm{e}^{-|x|^2/2}d\mathrm{vol}_{\mathbb{R}^{m+1}}$.
A simple computation shows that
\[
\overline{\nabla}f=x, 
\]
We hence obtain that $f$--minimal hypersurfaces $\Sigma$ in the Gaussian space $\mathbb{R}_f^{m+1}$ satisfy
\[
\mathbf{H}+ x^{\bot}=0, 
\]
and thus are exactly the self--shrinkers of mean curvature flow.
}
\end{example}

\begin{example}\label{es2}(Slices of warped products of the form $P\times_{e^{-f}}\mathbb{R}$)
\rm{Let $M^{m+1}=P^m\times_{e^{-f}}\mathbb{R}$, where $P$ is an $m$--dimensional Riemannian manifold, 
$f:P\rightarrow \mathbb{R}_+$ is a smooth function and the product manifold $P\times\mathbb{R}$ is endowed with the 
Riemannian metric
$$
\langle\cdot\,,\,\cdot\rangle=\pi_{P}^*(\langle\cdot\,,\,\cdot\rangle_{P})+e^{-2f(\pi_{P})}\pi_{\mathbb{R}}^{*}(d t\otimes dt).
$$
Here $\pi_{\mathbb{R}}$ and $\pi_{P}$ denote the projections onto the corresponding factors and $\langle\cdot\,,\,\cdot\rangle_{P}$ is the 
Riemannian  metric on $P^m$.
It is  a well--known fact (see for instance \cite{O}) that the distribution on the 
space orthogonal to $T=\partial/\partial t$ provides a foliation of $M$ by means of 
totally geodesic (hence minimal) leaves $P_t=P\times \left\{t\right\} $, $t\in \mathbb{R}$. 
Moreover, since the function $f$ only depends on $P$, it follows that the unit normal $\nu_t=T$, is everywhere 
orthogonal to $\overline{\nabla} f$. Hence the slices $P_t$, $t\in \mathbb{R}$, represent a distinguished family
of $f$--minimal hypersurfaces in $M$. 
}
\end{example}

\section{Stability properties}\label{SectStab}
It is a well--known fact that minimal hypersurfaces arise from a variational problem. Indeed, they 
are critical points of the area functional
\[
\mathrm{vol}(\Sigma)=\int_\Sigma d\mathrm{vol}_{\Sigma}. 
\]
More precisely, letting $x_t$ , $t\in(-\varepsilon,\varepsilon)$,
$x_0 = x$, be a smooth compactly supported variation of immersions and denoting by
$V$ the associated variational vector field along $x$ one gets that
\[
\frac{d}{dt} \mathrm{vol}(x_t(\Sigma))\Big|_{t=0}=-\int_\Sigma \langle \mathbf{H}, V\rangle d\mathrm{vol}_{\Sigma}. 
\]
A similar characterization can be given also for $f$--minimal hypersurfaces, 
(see e.g \cite{Si}, \cite{ChMeZh1}). Indeed, defining the weighted area functional of $\Sigma^m\to M^{m+1}_f$ by
\[
\ \mathrm{vol}_f(\Sigma)=\int_{\Sigma}e^{-f}d\mathrm{vol}_{\Sigma}
\]
we have that
\[
\frac{d}{dt} \mathrm{vol}_f(x_t(\Sigma))\Big|_{t=0}=
-\int_\Sigma \langle \mathbf{H}_f, V\rangle e^{-f}d\mathrm{vol}_{\Sigma}. 
\]

We can now give the following
\begin{definition}
Let $x_t$ , $t\in(-\varepsilon,\varepsilon)$,
$x_0 = x$, be a smooth compactly supported variation of immersions. We say that a $f$--minimal hypersurface $\Sigma$ is 
$L_f$--stable if
\[
\frac{d^2}{dt^2} \mathrm{vol}_f(x_t(\Sigma))\Big|_{t=0}\geq 0.
\]
\end{definition}
Denote by $V$ the variational vector field along $x$ associated to the variation and let $V=u\nu$, $u\in C_{c}^{\infty}$. By a direct computation one can prove the following second variation formula for the weighted area, \cite{Bay},
\begin{align*}
\frac{d^2}{dt^2} \mathrm{vol}_f(x_t(\Sigma))\Big|_{t=0}=&
\int_\Sigma (|\nabla u|^2-u^2(|\mathbf{A}|^2+
\overline{\mathrm{Ric}}_f(\nu,\nu)))\mathrm{e}^{-f}\,d\mathrm{vol}_{\Sigma}\\
=& \int_\Sigma u\,L_f u\,\mathrm{e}^{-f}\,d\mathrm{vol}_{\Sigma},
\end{align*}
where $\overline{\mathrm{Ric}}_{f}$ is the Bakry--\'Emery Ricci tensor of $M_{f}^{m+1}$, and the operator $L_f$ is defined by
\[
L_f u=-\Delta_f u-(|\mathbf{A}|^2+\overline{\mathrm{Ric}}_f(\nu,\nu))u.
\]
Some steps into the study of non--existence results for $L_f$ stable $f$--minimal hypersurfaces were moved in \cite{Fan}, \cite{ChMeZh1}, \cite{L}.

\begin{proposition}[\cite{Fan}, Corollary 1.4, \cite{L}, Theorem 1]\label{NoLfStabCpt}
Let $M_f$ be a complete weighted manifold with $\overline{\mathrm{Ric}}_f\geq k$
and let $\Sigma$ be a compact $f$--minimal hypersurface immersed in $M_f$ . 
\begin{itemize}
\item[(a)] If $k>0$ then $\Sigma$ cannot be $L_f$--stable;
\item[(b)] If $k=0$ and $\Sigma$ is $L_f$--stable, then it has to be 
totally geodesic and $\overline{\mathrm{Ric}}_f(\nu,\nu)=0$.  
\end{itemize}
\end{proposition}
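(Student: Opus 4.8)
The plan is to exploit the second variation formula recorded just above, together with the fact that $\Sigma$ is compact, so that the constant function becomes an admissible test function. First I would recall that for a variation with normal component $V=u\nu$, $u\in C^\infty_c(\Sigma)$, $L_f$--stability requires
\[
\int_\Sigma \bigl(|\nabla u|^2-u^2(|\mathbf{A}|^2+\overline{\mathrm{Ric}}_f(\nu,\nu))\bigr)\mathrm{e}^{-f}\,d\mathrm{vol}_\Sigma\geq 0
\]
for every such $u$. Since $\Sigma$ is compact, every smooth function has compact support, so I may insert the particularly convenient choice $u\equiv 1$. The gradient term then drops out and the stability inequality collapses to
\[
\int_\Sigma \bigl(|\mathbf{A}|^2+\overline{\mathrm{Ric}}_f(\nu,\nu)\bigr)\mathrm{e}^{-f}\,d\mathrm{vol}_\Sigma\leq 0 .
\]

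Next I would bound the integrand from below. Because $\nu$ is a unit vector and $\overline{\mathrm{Ric}}_f\geq k$, one has $|\mathbf{A}|^2+\overline{\mathrm{Ric}}_f(\nu,\nu)\geq |\mathbf{A}|^2+k\geq k$. For part (a), with $k>0$, this yields
\[
0\geq\int_\Sigma \bigl(|\mathbf{A}|^2+\overline{\mathrm{Ric}}_f(\nu,\nu)\bigr)\mathrm{e}^{-f}\,d\mathrm{vol}_\Sigma\geq k\,\mathrm{vol}_f(\Sigma)>0,
\]
a contradiction, so no compact $f$--minimal $\Sigma$ can be $L_f$--stable. For part (b), with $k=0$, the integrand is pointwise non--negative while its weighted integral is $\leq 0$; since the weight $\mathrm{e}^{-f}$ is strictly positive, the integrand must vanish identically. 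As $|\mathbf{A}|^2\geq0$ and $\overline{\mathrm{Ric}}_f(\nu,\nu)\geq k=0$ are separately non--negative, I conclude $|\mathbf{A}|^2\equiv0$, i.e. $\Sigma$ is totally geodesic, and $\overline{\mathrm{Ric}}_f(\nu,\nu)\equiv0$.

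I do not expect any real technical obstacle: the entire argument rests on the admissibility of the constant test function, which is guaranteed precisely by the compactness hypothesis. The only point I would state with care is the separation of the two non--negative contributions in case (b), which is what simultaneously pins down both geometric conclusions. The genuine difficulty, and indeed the subject of the paper's main theorems, emerges only in the non--compact setting, where $u\equiv1$ is no longer compactly supported and one must replace it by cut--off functions whose energy is controlled by the weighted volume growth.
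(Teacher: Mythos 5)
Your argument is correct: on a closed $\Sigma$ the constant function $u\equiv 1$ is an admissible test function, the stability inequality then reads $\int_\Sigma\bigl(|\mathbf{A}|^2+\overline{\mathrm{Ric}}_f(\nu,\nu)\bigr)\mathrm{e}^{-f}\,d\mathrm{vol}_\Sigma\leq 0$, and the pointwise lower bound $|\mathbf{A}|^2+\overline{\mathrm{Ric}}_f(\nu,\nu)\geq k$ yields the contradiction in case (a) and forces both non--negative summands to vanish identically in case (b). Note, however, that the paper does not prove this proposition itself (it is quoted from the references), and its own treatment of the subsuming statement, Proposition \ref{unstabfpar}, runs along a genuinely different route: there, $L_f$--stability is first converted, via the weighted Fischer--Colbrie--Schoen equivalence (Proposition \ref{weightedFCS}), into the existence of a positive solution $u$ of $\Delta_f u+(|\mathbf{A}|^2+\overline{\mathrm{Ric}}_f(\nu,\nu))u=0$; this $u$ is then $f$--superharmonic and bounded below, hence constant by $f$--parabolicity, which again gives $|\mathbf{A}|^2+\overline{\mathrm{Ric}}_f(\nu,\nu)\equiv 0$. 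Your test--function argument is more elementary and entirely self--contained in the compact case, but it relies on $u\equiv 1$ being compactly supported; the paper's route through positive Jacobi fields and parabolicity is exactly what lets the same conclusion survive for complete non--compact $f$--parabolic hypersurfaces (and, with the volume--growth refinements of Theorem \ref{LfStabfMin}, beyond), where your approach would need to be supplemented by cut--off functions with controlled weighted energy.
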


\begin{proposition}[\cite{ChMeZh1}, Theorem 5]\label{NoLfStabfVolFin}
Let $M_f$ be a complete weighted manifold with $\overline{\mathrm{Ric}}_f\geq k>0$. Then there exists no 
$L_f$--stable complete $f$--minimal hypersurface $\Sigma$ immersed in $M_f$ without boundary and with $\mathrm{vol}_f(\Sigma)<+\infty$.
\end{proposition}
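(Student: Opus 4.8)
The plan is to extract from $L_f$--stability a weighted Poincar\'e inequality with a strictly positive spectral gap, and then contradict it by exploiting the finiteness of the weighted volume through a standard family of cut-off test functions.

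First I would record the stability inequality. Since $\Sigma$ is $L_f$--stable, the second variation formula recalled above gives, for every $u\in C_c^\infty(\Sigma)$,
\[
\int_\Sigma u^2\bigl(|\mathbf{A}|^2+\overline{\mathrm{Ric}}_f(\nu,\nu)\bigr)\,e^{-f}\,d\mathrm{vol}_\Sigma\le\int_\Sigma|\nabla u|^2\,e^{-f}\,d\mathrm{vol}_\Sigma.
\]
Discarding the nonnegative term $|\mathbf{A}|^2$ and using the curvature hypothesis $\overline{\mathrm{Ric}}_f(\nu,\nu)\ge k>0$, this yields the weighted Poincar\'e-type inequality
\[
k\int_\Sigma u^2\,e^{-f}\,d\mathrm{vol}_\Sigma\le\int_\Sigma|\nabla u|^2\,e^{-f}\,d\mathrm{vol}_\Sigma,\qquad\forall\,u\in C_c^\infty(\Sigma).
\]
Equivalently, the bottom of the spectrum of the drift Laplacian $-\Delta_f$ on $\Sigma$ is at least $k>0$, a fact incompatible with the presence of an approximate constant eigenfunction, which finite volume will provide.

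Next, if $\Sigma$ is compact the constant $u\equiv1$ is admissible and the inequality forces $k\,\mathrm{vol}_f(\Sigma)\le0$, a contradiction; this is already Proposition \ref{NoLfStabCpt}(a), so I may assume $\Sigma$ non-compact. By completeness of $\Sigma$ (Hopf--Rinow) the intrinsic geodesic balls $B_r(o)$ about a fixed $o\in\Sigma$ are relatively compact and exhaust $\Sigma$. I would then take the standard Lipschitz cut-offs $\phi_R$ with $\phi_R\equiv1$ on $B_R(o)$, $\phi_R\equiv0$ outside $B_{2R}(o)$, and $|\nabla\phi_R|\le 1/R$ a.e.; these are compactly supported, and the inequality extends to them by the usual density argument. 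Choosing $u=\phi_R$ gives
\[
k\int_{B_R(o)}e^{-f}\,d\mathrm{vol}_\Sigma\le k\int_\Sigma\phi_R^2\,e^{-f}\,d\mathrm{vol}_\Sigma\le\frac{1}{R^2}\,\mathrm{vol}_f\bigl(B_{2R}(o)\setminus B_R(o)\bigr)\le\frac{1}{R^2}\,\mathrm{vol}_f(\Sigma).
\]
Letting $R\to+\infty$, the right-hand side tends to $0$ because $\mathrm{vol}_f(\Sigma)<+\infty$, while the left-hand side tends to $k\,\mathrm{vol}_f(\Sigma)$ by monotone convergence. Since $e^{-f}>0$ and $\Sigma\neq\emptyset$ we have $\mathrm{vol}_f(\Sigma)>0$, so $k\,\mathrm{vol}_f(\Sigma)\le0$ contradicts $k>0$, proving non-existence.

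The computation is routine; the only load-bearing ingredient is that finiteness of the weighted volume kills the gradient term $R^{-2}\mathrm{vol}_f(B_{2R}\setminus B_R)$ in the limit while the zeroth-order term survives. The sole technical points requiring care are the passage from smooth to Lipschitz test functions and the limiting procedure, both standard under completeness and finite volume. I would also stress that this crude cut-off wastes almost all of the information—it only needs $\mathrm{vol}_f(B_{2R})=o(R^2)$—and that the genuine difficulty, addressed later in Theorem \ref{LfStabfMin}, is to replace these cut-offs by test functions calibrated to the exponential rate $\sqrt{k}$ in order to reach the sharp threshold $\mathrm{vol}_f(B_r(o))=O(e^{\alpha r})$ with $\alpha<2\sqrt{k}$.
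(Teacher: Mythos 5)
Your argument is correct, but it follows a different route from the one the paper uses to recover this statement. The paper itself does not reprove the proposition directly (it is cited from Cheng--Mejia--Zhou); instead it subsumes it into Proposition \ref{unstabfpar} together with Remark \ref{rem_fvolumegrowth}: finite weighted volume forces $\Sigma$ to be $f$--parabolic, the weighted Fischer--Colbrie--Schoen equivalence (Proposition \ref{weightedFCS}) converts $L_f$--stability into a positive solution $u$ of $\Delta_f u+(|\mathbf{A}|^2+\overline{\mathrm{Ric}}_f(\nu,\nu))u=0$, and $f$--parabolicity forces $u$ to be constant, whence $|\mathbf{A}|^2+\overline{\mathrm{Ric}}_f(\nu,\nu)=0$, contradicting $k>0$. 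You instead test the stability quadratic form directly against the cut-offs $\phi_R$ and let finite volume kill the gradient term; this is essentially the classical cut-off proof that quadratic volume growth implies parabolicity, carried out inside the second variation rather than quoted as a black box. Your version is more elementary and self-contained (no need for Proposition \ref{weightedFCS} or the notion of $f$--parabolicity), and, as you note, it only uses $\mathrm{vol}_f(B_{2R}(o))=o(R^2)$. What the paper's structural route buys in exchange is the borderline case: the positive-Jacobi-solution argument also yields the rigidity statement for $k=0$ (totally geodesic with $\overline{\mathrm{Ric}}_f(\nu,\nu)=0$), which your contradiction scheme does not see, and it slots directly into the later index-theoretic machinery. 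Your closing remark correctly identifies that the sharp exponential threshold of Theorem \ref{LfStabfMin} requires the Brooks-type entropy bound $\lambda_1^f\leq\frac14 h_f^2$ rather than these crude cut-offs. The only technical points in your write-up---extending the stability inequality to compactly supported Lipschitz functions and the monotone convergence passage---are standard and you flag them appropriately.
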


\begin{remark}
\rm{When $\Sigma$ is a complete self--shrinker with $\mathrm{vol}_f(\Sigma)<+\infty$, the conclusion in Proposition 
\ref{NoLfStabfVolFin} was originally pointed out by T. Colding an W. Minicozzi in \cite{CoMi}. This follows by the 
equivalences obtained in \cite{ChZh}. Note also that it was conjectured by H. D. Cao that the weighted volume of 
complete self--shrinkers is always finite. On the other hand there is still no real evidence for this conjecture; 
see the very recent \cite{PR2} where some steps in this direction are made.}
\end{remark}

Following classical terminology in linear potential theory recall that
a weighted manifold $M_f=\left(M,\left\langle \cdot\,,\,\cdot\right\rangle
,e^{-f}d\mathrm{vol}_{M}\right)$ is said to be $f$--parabolic if every solution of
$\Delta_{f}u\geq0$ satisfying $u^{\ast}=\sup_{M}u<+\infty$ must be identically constant.

Moreover, the positivity of a Schr\"{o}dinger operator can be formulated in term of the existence of positive 
solutions of the associated linear equation. Indeed the following equivalence holds, that is  
a weighted version of 
a classical result by D. Fischer--Colbrie and R. Schoen, \cite{FiCoSh}, and W. F. Moss 
and J. Piepenbrink, \cite{MP}.

\begin{proposition}[\cite{Sa}]\label{weightedFCS}
Let $M_f$ be a weighted manifold, and $\Omega\subset M$ be a domain in $M$ and let $L=-\Delta_f+q\left(x\right)$, $q\left(x\right)\in L^\infty_{loc}\left(\Omega\right)$. Denote by $\lambda_{1}^{L}(\Omega)$ the bottom of the spectrum of $L$ on $\Omega$. The following facts are equivalent.
\begin{enumerate}
	\item [(i)]There exist $\omega\in C^{1,\alpha}_{loc}\left(\Omega\right)$, $\omega>0$, weak solution of
	\[
	\ \Delta_f\omega-q\left(x\right)\omega=0\quad\textrm{on}\quad\Omega.
	\] 
	\item [(ii)]There exist $\varphi\in W^{1,2}_{loc}\left(\Omega\right)$, $\varphi>0$, weak solution of
	\[
	\ \Delta_f\varphi -q\left(x\right)\varphi\leq0\quad\textrm{on}\quad\Omega.
	\]
	\item [(iii)]$\lambda_1^{L}\left(\Omega\right)\geq 0$.
\end{enumerate}
\end{proposition}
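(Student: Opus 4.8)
The plan is to establish the cyclic chain of implications $(i)\Rightarrow(ii)\Rightarrow(iii)\Rightarrow(i)$, adapting to the weighted setting the classical arguments of Fischer--Colbrie--Schoen and Moss--Piepenbrink. The starting observation is that $\Delta_f=e^{f}\,\mathrm{div}(e^{-f}\nabla\,\cdot\,)$ is a divergence--form operator with locally smooth coefficients (since $f\in C^\infty(M)$) which is self--adjoint with respect to $e^{-f}d\mathrm{vol}$; hence the weighted versions of the maximum principle, of Harnack's inequality, and of interior elliptic estimates hold verbatim. Throughout I would use that the bottom of the spectrum is given by the Rayleigh quotient
\[
\lambda_1^L(\Omega)=\inf_{\psi\in C_c^\infty(\Omega)\setminus\{0\}}\frac{\int_\Omega\bigl(|\nabla\psi|^2+q\psi^2\bigr)e^{-f}\,d\mathrm{vol}}{\int_\Omega\psi^2\,e^{-f}\,d\mathrm{vol}}.
\]
The implication $(i)\Rightarrow(ii)$ is immediate, since a positive weak solution of $\Delta_f\omega-q\omega=0$ is in particular a positive weak supersolution and $\omega\in C^{1,\alpha}_{loc}(\Omega)\subset W^{1,2}_{loc}(\Omega)$.

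For $(ii)\Rightarrow(iii)$ I would perform the ground state substitution. Given a positive supersolution $\varphi$ and an arbitrary $\psi\in C_c^\infty(\Omega)$, write $\psi=\varphi u$ with $u=\psi/\varphi$, expand $|\nabla\psi|^2$, and integrate the mixed term by parts against the weight using $\mathrm{div}_f(\varphi\nabla\varphi)=\varphi\,\Delta_f\varphi+|\nabla\varphi|^2$. This yields the Picone--type identity
\begin{align*}
\int_\Omega\bigl(|\nabla\psi|^2+q\psi^2\bigr)e^{-f}\,d\mathrm{vol}
&=\int_\Omega\varphi^2|\nabla u|^2\,e^{-f}\,d\mathrm{vol}\\
&\quad+\int_\Omega u^2\varphi\bigl(q\varphi-\Delta_f\varphi\bigr)e^{-f}\,d\mathrm{vol}.
\end{align*}
Because $q\varphi-\Delta_f\varphi\geq0$ and $u^2\varphi\geq0$, both integrals on the right are nonnegative, so the Rayleigh quotient is nonnegative and $(iii)$ follows. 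The delicate point is that $\varphi$ is only assumed to lie in $W^{1,2}_{loc}$ and the inequality holds merely weakly; I would bypass the formal integration by parts by testing the weak formulation of $\Delta_f\varphi-q\varphi\le0$ directly against $\psi^2/\varphi_\varepsilon$ with $\varphi_\varepsilon=\max\{\varphi,\varepsilon\}$, and then letting $\varepsilon\to0^+$ by monotone and dominated convergence.

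The substantial step is $(iii)\Rightarrow(i)$. I would fix an exhaustion $\Omega_1\Subset\Omega_2\Subset\cdots$ of $\Omega$ by smooth relatively compact domains and a basepoint $o\in\Omega_1$. Domain monotonicity of the first Dirichlet eigenvalue gives $\lambda_1^L(\Omega_k)>\lambda_1^L(\Omega)\geq0$, so on each $\Omega_k$ the operator $L$ is coercive, the maximum principle is available, and the Dirichlet problem $L\omega_k=0$ in $\Omega_k$ with $\omega_k\equiv1$ on $\partial\Omega_k$ has a unique solution, which is strictly positive. After normalizing $\omega_k\mapsto\omega_k/\omega_k(o)$ so that $\omega_k(o)=1$, the weighted Harnack inequality controls the normalized sequence from above and away from zero on each fixed compact subset, and the interior $C^{1,\alpha}$ estimates (valid since $q\in L^\infty_{loc}$) produce, via a diagonal argument, a subsequence converging in $C^{1,\alpha}_{loc}(\Omega)$ to a limit $\omega>0$ solving $\Delta_f\omega-q\omega=0$, which is exactly $(i)$.

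I expect the main obstacle to be this final step: the normalized solutions must be prevented from degenerating in the limit, which is precisely what Harnack guarantees, and the local regularity theory must be assembled carefully so that the diagonal cluster point is a genuine positive solution of the equation rather than a merely weak or possibly vanishing limit. The weak--formulation bookkeeping in $(ii)\Rightarrow(iii)$, caused by the low regularity of $\varphi$, is the secondary technical hurdle.
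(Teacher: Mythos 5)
Your argument is correct, but it is not the route the paper takes. The paper disposes of the proposition in a few lines: it observes that $L=-\Delta_f+q(x)$ is unitarily equivalent to the ordinary Schr\"odinger operator $S=-\Delta+\bigl(p(x)+q(x)\bigr)$, with $p=\tfrac14|\nabla f|^2-\tfrac12\Delta f$, under the multiplication map $T=M_{e^{-f/2}}\colon L^2(M,e^{-f}d\mathrm{vol})\to L^2(M,d\mathrm{vol})$, and then simply quotes the classical (unweighted) Fischer--Colbrie--Schoen / Moss--Piepenbrink equivalence (Lemma 3.10 in \cite{PRS_Progress}). Since $T$ is unitary the bottoms of the spectra of $L$ and $S$ coincide, and since $e^{-f/2}>0$ the correspondence $\omega\mapsto e^{-f/2}\omega$ carries positive (super)solutions of $\Delta_f\omega-q\omega=0$ (resp.\ $\leq 0$) to positive (super)solutions of $\Delta v-(p+q)v=0$ (resp.\ $\leq 0$) and back, so all three conditions transfer verbatim. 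You instead re-run the entire classical proof in the weighted setting: the ground-state/Picone substitution for $(ii)\Rightarrow(iii)$ (your identity is correct, and the $\varphi_\varepsilon=\max\{\varphi,\varepsilon\}$ truncation is the right way to handle the $W^{1,2}_{loc}$ regularity), and the exhaustion--Dirichlet-problem--Harnack--diagonal argument for $(iii)\Rightarrow(i)$, which is legitimate because $\Delta_f=e^f\mathrm{div}(e^{-f}\nabla\cdot)$ is locally a uniformly elliptic divergence-form operator with smooth coefficients, so the local Harnack inequality, generalized maximum principle and interior Schauder/De Giorgi--Nash--Moser estimates apply. The trade-off is clear: the paper's conjugation trick is essentially free but uses that $f$ is smooth enough for $p\in L^\infty_{loc}$ (here $f\in C^\infty$, so this is harmless); your direct proof is longer and must re-verify the elliptic machinery, but it is self-contained and never differentiates $f$ twice, so it would survive under weaker regularity of the weight. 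Both are complete proofs of the statement.
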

The proof of the previous proposition is straightforward once one looks at Lemma 3.10 in \cite{PRS_Progress} and 
observes that $L=-\Delta_f+q\left(x\right)$ is unitarly equivalent to the Schr\"odinger operator 
\[
\ S=-\Delta+\left[\left(1/4\left\langle \nabla f,\nabla f\right\rangle-1/2\Delta f\right)+q\left(x\right)\right]=-\Delta+\left(p\left(x\right)+q\left(x\right)\right)
\]
under the multiplication map $T=M_{e^{-f/2}}:L^2\left(M, e^{-f}d\mathrm{vol}_{M}\right)\rightarrow L^2\left(M, d\mathrm{vol}_{M}\right)$.

\begin{example}($L_f$--stable $f$--minimal hypersurfaces in warped products)
\rm{
Let $M^{m+1}=P^m\times_{e^{-f}}\mathbb{R}$, as in Example \ref{es2}, and let $\Sigma^m$ be an $f$--minimal
hypersurface isometrically immersed in $M^{m+1}$.
Setting $Y=e^{-f}T$ it is not hard to prove that the function $u=\langle Y, \nu\rangle$ 
satisfies
\[
\Delta_f u+(|\mathbf{A}|^2+\overline{\mathrm{Ric}}_f(\nu,\nu))u=0.
\]
Hence, using the previous proposition, we can see that every $f$--minimal hypersurface $\Sigma$ isometrically immersed in $M$ satisfying $0<u$ is $L_f$--stable.
}
\end{example}

We can now obtain the following generalization of Proposition \ref{NoLfStabCpt} and Proposition \ref{NoLfStabfVolFin}. Note that point (b) was already obtained in the very recent Theorem 7.3 in \cite{Esp}.

\begin{proposition}\label{unstabfpar}
Let $M_f$ be a complete weighted manifold with $\overline{\mathrm{Ric}}_f\geq k\geq0$
and let $\Sigma$ be a $f$--parabolic, complete, $f$--minimal hypersurface immersed in $M_f$.  
\begin{itemize}
\item[(a)] If $k>0$ then $\Sigma$ cannot be $L_f$--stable.
\item[(b)] if $k=0$ and $\Sigma$ is $L_f$--stable, then it has to be totally geodesic and
$\overline{\mathrm{Ric}}_f(\nu,\nu)=0$.  
\end{itemize}
\end{proposition}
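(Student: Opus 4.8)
The plan is to read the $L_f$--stability hypothesis as the validity of the \emph{stability inequality}
\[
\int_{\Sigma}|\nabla u|^{2}\,\mathrm{e}^{-f}\,d\mathrm{vol}_{\Sigma}\ \geq\ \int_{\Sigma}u^{2}\bigl(|\mathbf{A}|^{2}+\overline{\mathrm{Ric}}_{f}(\nu,\nu)\bigr)\,\mathrm{e}^{-f}\,d\mathrm{vol}_{\Sigma}\qquad\forall\,u\in C_{c}^{\infty}(\Sigma),
\]
which is exactly the assertion that the second variation $\int_{\Sigma}u\,L_f u\,\mathrm{e}^{-f}\,d\mathrm{vol}_{\Sigma}$ is nonnegative. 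The crucial sign observation is that, since $\overline{\mathrm{Ric}}_{f}\geq k\geq0$, the potential $q:=|\mathbf{A}|^{2}+\overline{\mathrm{Ric}}_{f}(\nu,\nu)$ satisfies $q\geq k\geq0$ pointwise on $\Sigma$. (Alternatively one could first invoke Proposition \ref{weightedFCS} to produce a positive solution $\omega$ of $\Delta_f\omega+q\,\omega=0$ and run the same computation on $u=\log\omega$ after an integration by parts; the direct route via the stability inequality is shorter and I would prefer it.)

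Next I would feed into this inequality the cutoff functions provided by $f$--parabolicity. The key analytic input is the classical equivalence, in its weighted form (see \cite{PRS_Progress}), between $f$--parabolicity of $\Sigma$ and the existence of a sequence $\{\phi_{j}\}\subset C_{c}^{\infty}(\Sigma)$ with $0\leq\phi_{j}\leq1$, $\phi_{j}\to1$ pointwise, and $\int_{\Sigma}|\nabla\phi_{j}|^{2}\,\mathrm{e}^{-f}\,d\mathrm{vol}_{\Sigma}\to0$. Choosing $u=\phi_{j}$ in the stability inequality gives
\[
\int_{\Sigma}\phi_{j}^{2}\,q\,\mathrm{e}^{-f}\,d\mathrm{vol}_{\Sigma}\ \leq\ \int_{\Sigma}|\nabla\phi_{j}|^{2}\,\mathrm{e}^{-f}\,d\mathrm{vol}_{\Sigma}.
\]
Since $q\geq0$ and $\phi_{j}^{2}\to1$, Fatou's lemma applied to the left-hand side yields, letting $j\to+\infty$,
\[
\int_{\Sigma}\bigl(|\mathbf{A}|^{2}+\overline{\mathrm{Ric}}_{f}(\nu,\nu)\bigr)\,\mathrm{e}^{-f}\,d\mathrm{vol}_{\Sigma}\ \leq\ 0.
\]

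From this single integral inequality, combined with $q\geq k\geq0$, both conclusions follow at once. For (a), if $k>0$ then $q\geq k>0$ forces $\int_{\Sigma}q\,\mathrm{e}^{-f}\,d\mathrm{vol}_{\Sigma}\geq k\,\mathrm{vol}_{f}(\Sigma)>0$ (the weighted area of a nonempty hypersurface is positive, whether finite or infinite), contradicting the displayed inequality; hence $\Sigma$ cannot be $L_f$--stable. For (b), with $k=0$ the integrand $q=|\mathbf{A}|^{2}+\overline{\mathrm{Ric}}_{f}(\nu,\nu)$ is nonnegative yet integrates to a nonpositive number, so it must vanish identically; therefore $|\mathbf{A}|\equiv0$, i.e.\ $\Sigma$ is totally geodesic, and $\overline{\mathrm{Ric}}_{f}(\nu,\nu)\equiv0$.

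The step I expect to be the main obstacle is the second one: making rigorous the passage from the \emph{definition} of $f$--parabolicity adopted here (every $f$--subharmonic function bounded above is constant) to the \emph{capacitary} characterization yielding cutoff functions $\phi_{j}\to1$ with vanishing weighted Dirichlet energy $\int_{\Sigma}|\nabla\phi_{j}|^{2}\,\mathrm{e}^{-f}\,d\mathrm{vol}_{\Sigma}\to0$. This equivalence is classical in the unweighted case and transfers to $\Delta_f$ because $\Delta_f$ is self-adjoint with respect to $\mathrm{e}^{-f}d\mathrm{vol}_{\Sigma}$, with associated Dirichlet form $\int_{\Sigma}|\nabla\phi|^{2}\,\mathrm{e}^{-f}\,d\mathrm{vol}_{\Sigma}$; everything else reduces to a one-line integration by parts and an application of Fatou's lemma, so the entire argument hinges on having this potential-theoretic equivalence available in the weighted setting.
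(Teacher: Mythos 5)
Your argument is correct, but it follows a genuinely different route from the paper's. The paper's proof is essentially the parenthetical alternative you set aside: it invokes Proposition \ref{weightedFCS} to produce a positive weak solution $u$ of $\Delta_f u+\bigl(|\mathbf{A}|^2+\overline{\mathrm{Ric}}_f(\nu,\nu)\bigr)u=0$, observes that $\overline{\mathrm{Ric}}_f\geq k\geq 0$ makes $u$ a positive (hence bounded below) $f$--superharmonic function, and then applies the Liouville--type \emph{definition} of $f$--parabolicity directly to conclude that $u$ is constant, whence the potential vanishes identically. That route uses parabolicity exactly as defined in the paper and needs no further potential-theoretic input. Your route instead tests the stability quadratic form against an evanescent sequence $\{\phi_j\}$ with $\int_\Sigma|\nabla\phi_j|^2 e^{-f}d\mathrm{vol}_\Sigma\to 0$ and concludes via Fatou; this is sound, and you correctly identify its one nontrivial prerequisite, namely the equivalence between the Liouville characterization and the null-capacity (cutoff) characterization of $f$--parabolicity. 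That equivalence does hold in the weighted setting, since the whole theory is that of the Dirichlet form $\int|\nabla\cdot|^2e^{-f}d\mathrm{vol}$, but it is not stated anywhere in the paper and must be imported from the literature (and one should note that the cutoffs it produces are compactly supported Lipschitz functions, so the stability inequality must first be extended from $C_c^\infty$ by density). The trade-off: your argument avoids Proposition \ref{weightedFCS} entirely and works directly with the second variation, which is conceptually cleaner, at the cost of a heavier potential-theoretic lemma; the paper's argument is shorter given the tools it has already set up. Both yield the same conclusion in cases (a) and (b).
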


\begin{proof}
Assume that $\Sigma$ is a $L_f$--stable complete $f$--minimal hypersurface immersed in $M_f$ 
which is $f$--parabolic. Since $\Sigma$ is $L_f$--stable, it follows by Proposition 
\ref{weightedFCS} that there exists 
a nonconstant function $u\in W^{1,2}_{loc}\left(\Sigma_f\right)$, $u>0$, weak solution of
\[
\Delta_f u+(|\mathbf{A}|^2+\overline{\mathrm{Ric}}_f(\nu,\nu))u=0.
\]
Since $\overline{\mathrm{Ric}}_f$ is bounded below by a positive constant $k$, this also implies that $u$ is a weak solution of
\[
\Delta_f u\leq -(k+|\mathbf{A}|^2)u\leq0.
\]
Hence $u$ is a $f$--superharmonic function bounded from below and, since $\Sigma$ is $f$--parabolic,
it must be constant. In particular,
\[
|\mathbf{A}|^2+\overline{\mathrm{Ric}}_f(\nu,\nu)=0. 
\]
and the conclusion follows immediately.
\end{proof}

\begin{remark}
\label{rem_fvolumegrowth}
{\rm 
It can be shown that a
sufficient condition for $\Sigma$ to be $f$--parabolic is that it is geodesically complete
and%
\begin{equation}\label{fpargeospheres}
\mathrm{vol}_{f}\left(  \partial B_{r}\right)  ^{-1}\notin L^{1}\left(
+\infty\right)  . 
\end{equation}
This fact can be easily established adapting to the diffusion operator
$\Delta_{f}$ standard proofs for the Laplace--Beltrami operator; see \cite{Gr},
\cite{RS}.\\
Moreover, note that it is not difficult to prove that $f$--parabolicity is also guaranteed if we assume the stronger 
condition
\begin{equation}\label{fpargeoballs}
\mathrm{vol}_{f}\left(B_{r}\right)=O(r^2),\quad \mathrm{as\ } r\rightarrow+\infty.
\end{equation}
The previous formula shows also that $f$--parabolicity holds if the manifold $\Sigma$ has finite $f$--volume. Hence, in particular, the conclusion in Proposition \ref{unstabfpar} can be obtained if we either assume $\mathrm{vol}_f(\Sigma)<+\infty$ or $\mathrm{vol}_f(B_{r}(o))=O(r^{2})$ as $r\to+\infty$.
}
\end{remark} 

In the following result we show that one can do even better, assuming a weaker growth condition on the weighted volume of geodesic balls.

\begin{theorem}\label{LfStabfMin}
Let $M_f$ be a complete weighted manifold with $\overline{\mathrm{Ric}}_f\geq k>0$. Then there is no 
$L_f$--stable complete non--compact $f$--minimal hypersurface $\Sigma$ immersed in $M_f$ provided 
$\mathrm{vol}_f(B_r(o))=O(\mathrm{e}^{\alpha r})$ as $r\rightarrow+\infty$, with $\alpha<2\sqrt{k}$.
\end{theorem}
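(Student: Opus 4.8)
The plan is to argue by contradiction, combining the weighted Fischer--Colbrie--Schoen criterion (Proposition \ref{weightedFCS}) with a weighted analogue of Brooks' estimate relating the bottom of the $L^2_f$--spectrum of $-\Delta_f$ to the exponential weighted volume growth. Suppose $\Sigma$ were an $L_f$--stable complete non--compact $f$--minimal hypersurface immersed in $M_f$ and satisfying the stated growth. Exactly as in the proof of Proposition \ref{unstabfpar}, $L_f$--stability together with Proposition \ref{weightedFCS} furnishes $u\in W^{1,2}_{loc}(\Sigma)$, $u>0$, which is a weak solution of
\[
\Delta_f u+(|\mathbf{A}|^2+\overline{\mathrm{Ric}}_f(\nu,\nu))u=0.
\]
Using $\overline{\mathrm{Ric}}_f(\nu,\nu)\geq k$ and $|\mathbf{A}|^2\geq0$ one gets $\Delta_f u+ku\leq0$ with $u>0$. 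Applying Proposition \ref{weightedFCS} in the opposite direction, now to $L=-\Delta_f-k$ (i.e. with $q\equiv-k$), the existence of such a $u$ is precisely condition (ii) and hence forces $\lambda_1^{L}(\Sigma)\geq0$; since shifting the operator by a constant shifts the spectrum, this reads
\[
\lambda_1^{-\Delta_f}(\Sigma)\geq k.
\]

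The heart of the argument is then a weighted Brooks--type inequality: writing $\mu:=\limsup_{r\to+\infty}\frac{\log \mathrm{vol}_f(B_r(o))}{r}$ for the exponential weighted volume growth rate, I claim
\[
\lambda_1^{-\Delta_f}(\Sigma)\leq\frac{\mu^2}{4}.
\]
To prove this I would test the variational characterization of $\lambda_1^{-\Delta_f}$ with the radial functions $g=e^{-b\rho/2}$, where $\rho=\mathrm{dist}_\Sigma(o,\cdot)$ (unbounded by completeness and non--compactness) and $b>\mu$ is arbitrary. Since $\rho$ is Lipschitz with $|\nabla\rho|=1$ almost everywhere, one has $|\nabla g|^2=\tfrac{b^2}{4}g^2$ a.e., and a layer--cake/integration by parts estimate using the bound $\mathrm{vol}_f(B_r)\leq Ce^{b'r}$ for some $b'\in(\mu,b)$ (available precisely because $b>\mu$) shows $\int_\Sigma g^2e^{-f}\,d\mathrm{vol}_\Sigma<+\infty$. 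To make $g$ an admissible compactly supported test function I would cut it off by $\psi_R g$, where $\psi_R$ equals $1$ on $B_R$, is supported in $B_{2R}$, and satisfies $|\nabla\psi_R|\leq C/R$; the Cauchy--Schwarz cross term involving $|\nabla\psi_R|$ is controlled by $CR^{-2}\int_\Sigma g^2e^{-f}\to0$, while $\int\psi_R^2|\nabla g|^2e^{-f}$ and $\int\psi_R^2 g^2e^{-f}$ converge monotonically to the full integrals, so the Rayleigh quotient of $\psi_R g$ tends to $\tfrac{b^2}{4}$. This yields $\lambda_1^{-\Delta_f}(\Sigma)\leq b^2/4$ for every $b>\mu$, hence the claim on letting $b\downarrow\mu$.

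To conclude, the hypothesis $\mathrm{vol}_f(B_r(o))=O(e^{\alpha r})$ gives $\mu\leq\alpha$, so the two inequalities combine into
\[
k\leq\lambda_1^{-\Delta_f}(\Sigma)\leq\frac{\mu^2}{4}\leq\frac{\alpha^2}{4}<\frac{(2\sqrt{k})^2}{4}=k,
\]
a contradiction. I expect the main obstacle to be the rigorous justification of the weighted Brooks inequality: in particular, verifying that $g=e^{-b\rho/2}$ lies in $L^2(\Sigma,e^{-f}d\mathrm{vol}_\Sigma)$ and that the cutoff error term vanishes. This is exactly the step in which the sharp threshold $\alpha<2\sqrt{k}$ enters, and where both the weighted measure and the non--smoothness of $\rho$ along the cut locus must be handled carefully. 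The degenerate case $\mathrm{vol}_f(\Sigma)<+\infty$, for which $\mu=0$, is subsumed by the same computation, consistently with Proposition \ref{NoLfStabfVolFin}.
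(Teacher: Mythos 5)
Your proposal is correct and follows essentially the same route as the paper: both arguments pin $\lambda_1^{-\Delta_f}(\Sigma)$ between the lower bound $k$ coming from $L_f$--stability and the upper bound $\alpha^2/4$ coming from a weighted Brooks--type volume--entropy estimate, and conclude by the contradiction $k\leq\alpha^2/4<k$. The only differences are cosmetic: the paper obtains $\lambda_1^f(\Sigma)\geq k$ directly from the Rayleigh quotient in the stability inequality rather than passing through Proposition \ref{weightedFCS} twice, and it cites the weighted Brooks inequality from the literature rather than re-deriving it with the test functions $e^{-b\rho/2}$ as you (correctly) sketch.
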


\begin{proof}
Define the weighted volume entropy of 
$(\Sigma, \left\langle \cdot\,,\,\cdot\right\rangle_\Sigma, \mathrm{e}^{-f} d\mathrm{vol}_\Sigma)$ as 
\[
h_f(\Sigma) := \limsup_{r\rightarrow+\infty} \frac{\log \mathrm{vol}_f (B_r(o))}{r}.
\]
As observed in \cite{Br}, the following inequality holds true in general for the bottom of the spectrum of the $f$--Laplacian $\lambda_1^{f}$:
\[
\lambda_1^f(\Sigma)\leq \frac14 h_f^2(\Sigma).
\]
Hence, in particular, if we assume that $\mathrm{vol}_f(B_r(o))=O(\mathrm{e}^{\alpha r})$ as
$r\rightarrow+\infty$ we obtain
\[
\lambda_1^f(\Sigma)\leq \frac{\alpha^2}{4}. 
\]
Now assume by contradiction that $\Sigma$ is $L_f$--stable. Then
\begin{align*}
\frac{\alpha^2}{4}\geq\lambda_1^f(\Sigma)= & 
\inf_{0\neq u\in C^{\infty}_{c}(\Sigma)}\frac{\int_\Sigma|\nabla u|^2\mathrm{e}^{-f}dvol_\Sigma}{\int_\Sigma u^2\mathrm{e}^{-f}d\mathrm{vol}_\Sigma}\\  
\geq & \frac{\int_\Sigma(|A|^2+\overline{\mathrm{Ric}}_f(\nu,\nu))u^2\mathrm{e}^{-f}d\mathrm{vol}_\Sigma}
{\int_\Sigma u^2\mathrm{e}^{-f}d\mathrm{vol}_\Sigma}\\  
\geq & k,
\end{align*}
for any $u\in C^\infty_c(\Sigma)$, contradicting the assumption on $\alpha$.
\end{proof}

In order to study $L_f$--unstable $f$--minimal hypersurfaces we introduce the $f$--index of $\Sigma$ as 
the generalized Morse index of $L_f$ on $\Sigma$. Namely, let $x:\Sigma^m\rightarrow M_f^{m+1}$ be an 
isometrically immersed complete orientable $f$--minimal hypersurface. 
Given a bounded domain $\Omega\subset\Sigma$ we define 
\[
Ind^{L_f} (\Omega)=\# \{\mathrm{negative\ eigenvalues\ of\ } L_f \mathrm{\ on\ } \mathcal{C}_{0}^{\infty}(\Omega) \}.
\]
The $f$--\textit{index} of $\Sigma$ is then defined as
\[
Ind_f(\Sigma):=Ind^{L_f}(\Sigma)=\sup_{\Omega\subset\subset\Sigma}Ind^{L_f} (\Omega). 
\]
Geometrically, the $f$--index of $\Sigma$ can be described as the maximum dimension of the linear space of 
compactly supported deformations that decrease the weighted area up to second order.

The following result, due to B. Devyver, \cite{De}, permits to interpret the finiteness of the Morse index of a Schr\"{o}dinger operator in terms of the existence of a positive solution of the associated linear equation outside a compact set (also in the weighted setting).
\begin{proposition}\label{Baptiste}
Let $\Sigma_f$ be a complete weighted manifold, and let $L=-\Delta_f-q(x)$, $q(x)\in L^{\infty}_{loc}(\Sigma)$. The following facts are equivalent
\begin{enumerate}
	\item[(i)] $L$ has finite Morse index.
	\item[(ii)] There exists a positive smooth function $\varphi\in W_{loc}^{1,2}$ which satisfies $L\,\varphi=0$ outside a compact set.
	\item[(iii)]$\lambda_1^{L}(\Sigma\setminus\Omega)\geq0$, for some $\Omega\subset\subset\Sigma$.
\end{enumerate}
 
\end{proposition}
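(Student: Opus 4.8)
The plan is to reduce the statement to the corresponding result for a \emph{standard} Schr\"odinger operator on the Riemannian manifold $\left(\Sigma,\langle\cdot\,,\,\cdot\rangle\right)$, where it is precisely the theorem of B. Devyver in \cite{De}, and then to transport the three conditions through the unitary conjugation already exploited in the proof of Proposition \ref{weightedFCS}. Concretely, consider the unitary map $T=M_{e^{-f/2}}:L^2\left(\Sigma,e^{-f}d\mathrm{vol}_\Sigma\right)\to L^2\left(\Sigma,d\mathrm{vol}_\Sigma\right)$. A direct computation gives $e^{-f/2}\Delta_f\left(e^{f/2}\psi\right)=\Delta\psi-p(x)\psi$, with $p(x)=\tfrac14\langle\nabla f,\nabla f\rangle-\tfrac12\Delta f$, so that $T$ conjugates $L=-\Delta_f-q(x)$ into the Schr\"odinger operator
\[
S=TLT^{-1}=-\Delta+\left(p(x)-q(x)\right),
\]
and since $f\in C^{\infty}(\Sigma)$ we have $p-q\in L^{\infty}_{loc}(\Sigma)$, exactly as required to apply \cite{De} to $S$.

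Next I would check that each of (i), (ii), (iii) is invariant under this conjugation. For (iii) this is immediate, since a unitary equivalence preserves spectra and hence $\lambda_1^{L}(\Sigma\setminus\Omega)=\lambda_1^{S}(\Sigma\setminus\Omega)$ for every $\Omega\subset\subset\Sigma$. For (ii), a function $\varphi$ solves $L\varphi=0$ on $\Sigma\setminus\Omega$ if and only if $\psi=e^{-f/2}\varphi$ solves $S\psi=0$ there; moreover $e^{-f/2}>0$ and smooth, so positivity and the local regularity class are preserved, and the correspondence respects the requirement ``outside a compact set''. For (i), the key observation is that $T$ maps $C_c^{\infty}(\Omega)$ bijectively onto itself for every bounded domain $\Omega$ and intertwines the index forms,
\[
\int_\Sigma u\,L u\,e^{-f}d\mathrm{vol}_\Sigma=\int_\Sigma\psi\,S\psi\,d\mathrm{vol}_\Sigma,\qquad\psi=e^{-f/2}u,
\]
so that the quadratic forms of $L$ and $S$ on $C_c^{\infty}(\Omega)$ have the same signature; consequently $\mathrm{ind}^{L}(\Omega)=\mathrm{ind}^{S}(\Omega)$ for every $\Omega$, and taking the supremum shows that $L$ has finite Morse index if and only if $S$ does.

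With these three invariances in hand the proposition follows at once: applying \cite{De} to $S$ yields the equivalence (i) $\Leftrightarrow$ (ii) $\Leftrightarrow$ (iii) for $S$, and the conjugation transports it verbatim to $L$. The bulk of the argument is genuinely routine once the unitary equivalence is set up; the only point requiring some care is the invariance of the generalized Morse index in (i), since the index is a finer spectral quantity than the bottom of the spectrum. This is handled by noting that the equality of index forms holds domain by domain and that $T$ preserves the spaces of compactly supported test functions, so that no spectral information is lost in passing from the weighted measure to the Riemannian one.
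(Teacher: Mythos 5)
Your proposal is correct and coincides with the route the paper itself intends: the result is quoted from \cite{De} for the unweighted operator, and the extension to the weighted setting is exactly the conjugation by $T=M_{e^{-f/2}}$ that the authors already set up after Proposition \ref{weightedFCS}. You merely make explicit the (routine but worthwhile) verification that the conjugation preserves not only the bottom of the spectrum but also the generalized Morse index and the local solvability outside a compact set, which is precisely what the paper's parenthetical ``also in the weighted setting'' is implicitly relying on.
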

Let $v(t)=\mathrm{vol}_f(\partial B_t(o))$, where 
$\partial B_t(o)$ are the geodesic spheres of radius $t$ in $\Sigma$. Note that by the co-area formula we have that
\begin{equation}\label{co-area}
\mathrm{vol}_f(B_r(o))=\int_{0}^rv(t)dt.
\end{equation}
We obtain the following
\begin{proposition}\label{fIndDCZ}
Let $\Sigma_f$ be a complete noncompact weighted manifold with $\mathrm{vol}_f(\Sigma)=+\infty$ and let $\Omega$ be an arbitrary compact
subset of $\Sigma$. Then
\begin{enumerate}
 \item If $\mathrm{vol}_f(B_r(o))\leq C r^a$ for any $r\geq r_0$ and some positive constants $C$, $r_0$ and $a$, then
 $\lambda_1^f(\Sigma\backslash \Omega)=0$.
 \item If $\mathrm{vol}_f(B_r(o))\leq C \mathrm{e}^{\alpha r}$ for any $r\geq 0$ and some positive constants $C$ and $\alpha$, then
 $\lambda_1^f(\Sigma\backslash \Omega)\leq \frac{\alpha^2}{4}$.
\end{enumerate}

 \end{proposition}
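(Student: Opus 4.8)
The plan is to read both conclusions off the variational characterization of the bottom of the spectrum,
\[
\lambda_1^f(\Sigma\setminus\Omega)=\inf\left\{\frac{\int_\Sigma|\nabla\phi|^2\mathrm{e}^{-f}d\mathrm{vol}_\Sigma}{\int_\Sigma\phi^2\mathrm{e}^{-f}d\mathrm{vol}_\Sigma}\ :\ \phi\in C_c^\infty(\Sigma\setminus\Omega),\ \phi\not\equiv0\right\},
\]
by exhibiting explicit \emph{radial} test functions whose Rayleigh quotient is controlled by the prescribed volume growth. Since $\Omega$ is compact I would fix $\rho$ with $\Omega\subset B_\rho(o)$ and use functions of the form $\phi=g(r(x))$ with $g$ Lipschitz and supported in $(\rho,+\infty)$, which are admissible; because $|\nabla r|=1$ almost everywhere, the co--area formula \eqref{co-area} reduces their Rayleigh quotient to the one--dimensional expression $\int_\rho^{\infty}g'(t)^2v(t)\,dt\big/\int_\rho^{\infty}g(t)^2v(t)\,dt$, with $v(t)=\mathrm{vol}_f(\partial B_t)$. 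The decisive structural point is that such test functions may be placed in an arbitrarily distant annulus, so that $\Omega$ plays no role and the estimate is governed only by the behaviour of $v$ at infinity; in this sense what is needed is exactly the weighted Brooks inequality $\lambda_1^f\leq\tfrac14 h_f^2$ already used in the proof of Theorem \ref{LfStabfMin} (see \cite{Br}), but localized at the end rather than on all of $\Sigma$.

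For part (2) I would take $g(t)=\mathrm{e}^{-\alpha t/2}\chi(t)$, where $\chi$ is a slowly varying cut--off equal to $1$ on a long plateau $[\rho_1+L,\rho_1+2L]$ and tapering linearly to $0$ on the two flanking intervals of length $L$, so that $|\chi'|\leq L^{-1}$. On the plateau the ratio of $g'^2$ to $g^2$ is identically $\alpha^2/4$, which is the bulk contribution; the remaining terms involve $\chi'$ and, after Cauchy--Schwarz, are dominated by $\int\mathrm{e}^{-\alpha t}\chi'^2v\big/\int\mathrm{e}^{-\alpha t}\chi^2v$. Here the hypothesis $\mathrm{vol}_f(B_r)\leq C\mathrm{e}^{\alpha r}$ is precisely what is required: integrating by parts it forces $\int_\rho^{R}\mathrm{e}^{-\alpha t}v(t)\,dt$ to grow at most linearly in $R$, so that on a suitable window the flank integrals are comparable to the plateau integral and the factor $L^{-2}$ drives the error term to zero as $L\to+\infty$. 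Letting $L\to+\infty$ then yields $\lambda_1^f(\Sigma\setminus\Omega)\leq\alpha^2/4$.

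Part (1) I would treat as the degenerate case $\alpha=0$ of the same scheme. Under $\mathrm{vol}_f(B_r)\leq Cr^a$ one has $h_f(\Sigma)=\limsup_{r\to\infty}r^{-1}\log\mathrm{vol}_f(B_r(o))=0$, so the same family of slowly varying cut--offs, now without the exponential factor (for instance logarithmic cut--offs), produces Rayleigh quotients tending to $0$. This gives $\lambda_1^f(\Sigma\setminus\Omega)\leq\tfrac14 h_f^2=0$, and since $\lambda_1^f\geq0$ always holds, equality follows.

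I expect the main obstacle to be the honest control of the cut--off error, since there is no pointwise lower bound on $v(t)$ and the weighted volume may accumulate irregularly along the end. The way out is the interplay of the two standing hypotheses: the upper bound $\mathrm{vol}_f(B_r)\leq C\mathrm{e}^{\alpha r}$ (respectively $Cr^a$) caps the growth of the weighted integrals entering the error term, while $\mathrm{vol}_f(\Sigma)=+\infty$ guarantees that arbitrarily distant windows carrying a non--negligible plateau integral exist, so that a sufficiently wide cut--off makes the error genuinely lower order. Once this quantitative balance is arranged, both statements follow by sending the width of the window to infinity.
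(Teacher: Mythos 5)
Your reduction to radial test functions via the co--area formula \eqref{co-area} is exactly the skeleton of the paper's argument, but the step you yourself flag as ``the main obstacle'' is left as an assertion, and it is the entire content of the proposition. Concretely: writing $w(t)=\mathrm{e}^{-\alpha t}v(t)$ with $v(t)=\mathrm{vol}_f(\partial B_t(o))$, your error term is bounded by $L^{-2}$ times the ratio of the flank integral of $w$ to the plateau integral of $w$, and you need to exhibit windows, located beyond any prescribed radius, on which this ratio is $o(L^{2})$. The hypothesis $\mathrm{vol}_f(B_r)\leq C\mathrm{e}^{\alpha r}$ does give $\int_0^R w\leq C(1+\alpha R)$ by integration by parts, but $\mathrm{vol}_f(\Sigma)=+\infty$ does \emph{not} by itself guarantee a usable lower bound on the plateau integral: $\int^{\infty}w$ can converge even when $\int^{\infty}v=+\infty$ (e.g.\ $v(t)\sim \mathrm{e}^{\alpha t}/(1+t^2)$), and the $w$--mass may be distributed in sparse, decaying bursts, so that the plateau integral tends to $0$ as the window recedes. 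One then has to run a genuine selection argument balancing the position and width of the window against the decay of the tail of $w$; ``the interplay of the two standing hypotheses'' is precisely this argument, not bookkeeping. The same gap recurs in part (1): logarithmic cut--offs handle $\mathrm{vol}_f(B_r)\leq Cr^{a}$ only for small $a$ (for $a\geq 2$ the numerator $\int_R^{R^2}v(t)t^{-2}(\log R)^{-2}dt$ need not stay bounded), so ``for instance logarithmic cut--offs'' does not cover the stated generality of arbitrary $a>0$.

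The paper closes exactly this gap by importing the oscillation criterion of Do Carmo--Zhou (Theorem 2.1 in \cite{DCZhou}) for the reduced ODE \eqref{ODEOsc}: under infinite weighted volume together with either growth hypothesis, the equation $(v(t)x')'+\lambda v(t)x=0$ is oscillatory for every $\lambda>0$ (resp.\ every $\lambda>\alpha^2/4$). A solution then has nodal intervals $(T_1^{\lambda},T_2^{\lambda})$ arbitrarily far out, past any $T_0$ with $\Omega\subset B_{T_0}(o)$, and the radial test function $x_\lambda(r(x))$ supported on the corresponding annulus has Rayleigh quotient \emph{exactly} $\lambda$ after one integration by parts against the ODE --- no cut--off error to control at all. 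So your approach diverges from the paper's only at the key lemma, and that is precisely the lemma you have not supplied; to complete your route you would either have to prove the window--selection estimate directly (essentially re--deriving the oscillation theorem) or cite it, as the paper does.
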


\begin{proof}
Since $\Omega$ is compact we can find a constant $T_0$ such that $\Omega\subset B_{T_0}(o)$. We reason now as in \cite{DCZhou}, \cite{BMR}, and exploit the oscillatory behaviour under our assumptions of solutions of the ODE
\begin{equation}\label{ODEOsc}
\begin{cases}
(v(t)x^{\prime}(t))^{\prime}+\lambda v(t)x(t)=0,\quad \textrm{a.e. on}\quad (T_0,+\infty),\\
x(T_0)=x_0,
\end{cases}
\end{equation} 
where $v(t)$ is a positive continuous function on $[T_0, +\infty)$ and $\lambda$ is a positive constant. Choosing $v(t)=\mathrm{vol}_f(\partial B_t(o))$ it then follows from Theorem 2.1 in \cite{DCZhou} that equation \eqref{ODEOsc} is oscillatory provided $\Sigma$ has infinite $f$--volume and either the assumption in (1), or $\lambda>\frac{\alpha^2}{4}$ and the assumption in (2), hold true. Now the proof proceeds with slight modifications as in Theorem 3.1 in \cite{DCZhou}, but we report it here for the sake of completeness. Let us first assume that $\mathrm{vol}_f(B_r(o))\leq Cr^a$ for any $r\geq r_0$ and some positive constants $C$, $r_{0}$, $a$. Then for any $\lambda>0$ there exists some nontrivial oscillatory solution $x_\lambda(t)$ of \eqref{ODEOsc} a.e. on $[T_0,+\infty)$, i.e., there exist $T_1^{\lambda}$ and $T_2^{\lambda}$ in $[T_0,+\infty)$ such that $T_1^{\lambda}<T_2^{\lambda}$, $x_{\lambda}(T_1^{\lambda})=x_{\lambda}(T_2^{\lambda})=0$, and $x_{\lambda}(t)\neq 0$ for any $t\in(T_1^{\lambda}, T_2^{
\lambda})$. 
Let $\varphi_\lambda(x)=x_\lambda(r(x))$ and $\Omega_\lambda=B_{T_2^\lambda}(o)\setminus B_{T_1^{\lambda}}(o)$. It follows that
\begin{eqnarray*}
0\leq\lambda_1^f(\Sigma\setminus\Omega)&\leq&\lambda_1^{f}(\Omega_\lambda)\\
&\leq&\frac{\int_{\Omega_\lambda}|\nabla \varphi_\lambda|^2e^{-f}d\mathrm{vol}_{\Sigma}}{\int_{\Omega_\lambda}|\varphi_\lambda|^2e^{-f}d\mathrm{vol}_{\Sigma}}\\
&=&\frac{\int_{T_1^{\lambda}}^{T_2^{\lambda}}(x^{\prime}_{\lambda}(r))^2v(r)dr}{\int_{T_1^{\lambda}}^{T_2^{\lambda}}(x_{\lambda}(r))^2v(r)dr}\\
&=&-\frac{\int_{T_1^{\lambda}}^{T_2^{\lambda}}(v(r)x^{\prime}_{\lambda}(r))^{\prime}x_\lambda(r)dr}{\int_{T_1^{\lambda}}^{T_2^{\lambda}}(x_{\lambda}(r))^2v(r)dr}\\
&=&\lambda.
\end{eqnarray*}
Since $\lambda$ is an arbitrary positive constant, we obtain that $\lambda^f_1(\Sigma\setminus\Omega)=0$.

On the other hand, suppose that the assumption in (2) is satisfied. Then, for any $\lambda>\frac{\alpha^2}{4}$ there exists again a nontrivial oscillatory solution $x_\lambda(t)$ of \eqref{ODEOsc} on $[T_0,+\infty)$. Proceeding as above, we get that $\lambda_1^f(\Sigma\setminus\Omega)\leq\lambda$. The conclusion is thus straightforward since $\lambda$ is an arbitrary positive constant larger than $\frac{\alpha^2}{4}$.
\end{proof}
Adapting arguments in \cite{BMR} we obtain also the following

\begin{proposition}\label{fIndBMR}
Let $\Sigma_f$ be a complete non--compact weighted manifold and let $L$ be the Schr\"{o}dinger operator defined by
\[
\ Lu=-\Delta_fu-q(x)u,
\]
where $q(x)$ is a continuous nonnegative function on $\Sigma$. Assume that 
\begin{enumerate}
	\item[(i)] $\mathrm{vol}_f(\partial B_r(o))^{-1}\notin L^1(+\infty)$;
	\item[(ii)] $q\notin L^1(\Sigma, e^{-f}d\mathrm{vol}_{\Sigma})$.  
\end{enumerate}
Then, for an arbitrary compact subset $\Omega\subset\Sigma$ we have that the bottom of the spectrum of 
$L$ on $\Sigma\setminus\Omega$ satisfies $\lambda_1^{L}(\Sigma\setminus\Omega)<0$.
\end{proposition}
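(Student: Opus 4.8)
The plan is to argue by contradiction, reducing the statement to a single integral inequality that hypotheses (i) and (ii) then violate. So suppose $\lambda_1^{L}(\Sigma\setminus\Omega)\geq 0$. Since $\Omega$ is compact, fix $T_0$ with $\Omega\subset B_{T_0}(o)$. Applying the weighted Fischer--Colbrie--Schoen equivalence of Proposition \ref{weightedFCS} on the open set $D=\Sigma\setminus\Omega$ (with potential $-q$ in the notation there), the assumption $\lambda_1^L(D)\geq0$ produces a function $\varphi\in C^{1,\alpha}_{loc}(D)$, $\varphi>0$, which is a weak solution of $\Delta_f\varphi + q\varphi\leq 0$ on $D$; equivalently, for every $0\le\eta\in\mathrm{Lip}_c(D)$ one has $\int_D\langle\nabla\varphi,\nabla\eta\rangle e^{-f}\,d\mathrm{vol}_\Sigma\ge\int_D q\varphi\,\eta\, e^{-f}\,d\mathrm{vol}_\Sigma$.

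The core of the argument is then the estimate that for every Lipschitz cut--off $\phi$ with compact support in $D$,
\[
\int_\Sigma q\,\phi^2\,e^{-f}\,d\mathrm{vol}_\Sigma\ \le\ \int_\Sigma|\nabla\phi|^2\,e^{-f}\,d\mathrm{vol}_\Sigma.
\]
To obtain it I would insert the admissible test function $\eta=\phi^2/\varphi\ge0$ into the weak inequality above. Expanding $\nabla\eta=2\phi\varphi^{-1}\nabla\phi-\phi^2\varphi^{-2}\nabla\varphi$ and estimating, via Young's inequality $2\phi\varphi^{-1}\langle\nabla\varphi,\nabla\phi\rangle\le|\nabla\phi|^2+\phi^2\varphi^{-2}|\nabla\varphi|^2$, the two copies of $\int\phi^2\varphi^{-2}|\nabla\varphi|^2 e^{-f}$ cancel and the displayed estimate drops out. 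Equivalently one sets $w=\log\varphi$ and integrates the pointwise inequality $|\nabla w|^2\le-q-\Delta_f w$ against $\phi^2 e^{-f}$; using $\phi^2/\varphi$ directly sidesteps any regularity issue with $\log\varphi$.

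It remains to build cut--offs that keep the right--hand side bounded while forcing the left--hand side to diverge, which is exactly where the two hypotheses enter. Writing $v(t)=\mathrm{vol}_f(\partial B_t(o))$ and using the co--area formula \eqref{co-area}, for $R>T_0+1$ and $S>R$ I define $\phi$ radially by $\phi\equiv0$ on $B_{T_0}(o)$, $\phi$ rising linearly from $0$ to $1$ on $B_{T_0+1}(o)\setminus B_{T_0}(o)$, $\phi\equiv1$ on $B_R(o)\setminus B_{T_0+1}(o)$, $\phi(x)=\left(\int_{r(x)}^S v(t)^{-1}dt\right)\left(\int_R^S v(t)^{-1}dt\right)^{-1}$ on $B_S(o)\setminus B_R(o)$, and $\phi\equiv0$ outside $B_S(o)$. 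A direct computation using $|\nabla r|=1$ and the co--area formula gives $\int_{B_S\setminus B_R}|\nabla\phi|^2 e^{-f}\,d\mathrm{vol}_\Sigma=\left(\int_R^S v(t)^{-1}dt\right)^{-1}$, while the fixed inner transition contributes at most the constant $C_0=\mathrm{vol}_f(B_{T_0+1}(o)\setminus B_{T_0}(o))$, independent of $R$ and $S$. By hypothesis (i), namely $v^{-1}\notin L^1(+\infty)$ (the $f$--parabolicity condition of Remark \ref{rem_fvolumegrowth}), the annular term tends to $0$ as $S\to+\infty$, so $\int_\Sigma|\nabla\phi|^2 e^{-f}\,d\mathrm{vol}_\Sigma\le C_0+1$ for $S$ large. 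On the other hand $\int_\Sigma q\,\phi^2 e^{-f}\,d\mathrm{vol}_\Sigma\ge\int_{B_R\setminus B_{T_0+1}}q\,e^{-f}\,d\mathrm{vol}_\Sigma$, and since $q$ is continuous it has finite weighted mass on the compact set $B_{T_0+1}(o)$, so hypothesis (ii) forces this lower bound to $+\infty$ as $R\to+\infty$. Choosing first $R$ and then $S$ large enough makes $\int_\Sigma q\phi^2 e^{-f}>C_0+1\ge\int_\Sigma|\nabla\phi|^2 e^{-f}$, contradicting the core estimate, and hence $\lambda_1^L(\Sigma\setminus\Omega)<0$.

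The main obstacle is less a single hard computation than the careful matching of the two hypotheses through the geometry of the cut--off: condition (i) controls only the \emph{outer} logarithmic piece of $\phi$, so one must isolate a fixed inner transition carrying a bounded but nonzero energy $C_0$ and guarantee that the divergent weighted mass of $q$ supplied by (ii) eventually beats $C_0$. A secondary point worth stating cleanly is the admissibility of the test function $\phi^2/\varphi$ in the weak formulation, which is precisely why I prefer the $\phi^2/\varphi$ substitution to the formally equivalent $\log\varphi$ computation.
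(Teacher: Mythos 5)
Your proof is correct, but it follows a genuinely different route from the paper's. The paper reduces the statement to an ODE oscillation result: citing Corollary 2.4 of \cite{BMR}, under (i) and (ii) every solution of $(v(t)x'(t))'+Q(t)v(t)x(t)=0$ with $Q(t)=v(t)^{-1}\int_{\partial B_t(o)}q\,e^{-f}$ is oscillatory; the radial function $x_Q(r(x))$ supported on the annulus between two consecutive zeros is then an admissible test function on which the quadratic form of $L$ vanishes identically (by the co--area formula and integration by parts), and strict domain monotonicity of $\lambda_1$ converts this into the strict inequality $\lambda_1^L(\Sigma\setminus\Omega)<0$. You instead argue by contradiction: assuming $\lambda_1^L(\Sigma\setminus\Omega)\geq 0$ you derive the inequality $\int q\phi^2 e^{-f}\leq\int|\nabla\phi|^2e^{-f}$ for compactly supported Lipschitz $\phi$ in $\Sigma\setminus\Omega$, and then violate it with a logarithmic cut--off whose outer energy is $\left(\int_R^S v^{-1}\right)^{-1}\to 0$ by (i) while (ii) makes the potential term blow up. Both arguments are sound; yours is self--contained (it does not invoke the external oscillation theorem) and obtains the strict inequality directly from the contradiction, without appealing to strict domain monotonicity, while the paper's is uniform with the proof of Proposition \ref{fIndDCZ} and reuses the same ODE machinery. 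Two small remarks: your detour through Proposition \ref{weightedFCS} is unnecessary, since the ``core estimate'' is nothing but the variational characterization of $\lambda_1^L(\Sigma\setminus\Omega)\geq0$ (extended from $C_c^\infty$ to $\mathrm{Lip}_c$ by density), so the positive supersolution $\varphi$ and the test function $\phi^2/\varphi$ can be dispensed with entirely; and the outer energy identity uses $|\nabla r|=1$ almost everywhere together with the weighted co--area formula, which is worth stating explicitly (an inequality $\leq$ would in any case suffice).
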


\begin{proof}
Since $\Omega$ is compact we can find a constant $T_0$ such that $\Omega\subset B_{T_0}(o)$. By Corollary 2.4 in 
\cite{BMR} we have that under our assumptions any solution $x(t)$ of 
\begin{equation}\label{ODEOscA}
\begin{cases}
(v(t)x^{\prime}(t))^{\prime}+Q(t) v(t)x(t)=0,\quad \textrm{a.e. on}\quad (T_0,+\infty),\\
x(T_0)=x_0
\end{cases}
\end{equation}
where $Q(t)=\frac{1}{v(t)}\int_{\partial B_t(o)}qe^{-f}$, is oscillatory.
Choose, as above, $v(t)=\mathrm{vol}_f(\partial B_t(o))$. Then there exists some nontrivial oscillatory solution 
$x_Q(t)$ of \eqref{ODEOscA} a.e. on $[T_0,+\infty)$, i.e., there exist $T_1^{Q}$ and $T_2^{Q}$ in $[T_0,+\infty)$ such that 
$T_1^{Q}<T_2^{Q}$ and $x_{Q}(T_1^{Q})=x_{Q}(T_2^{Q})=0$, and $x_{Q}(t)\neq 0$ for any $t\in(T_1^{Q}, T_2^{Q})$. 
Let $\varphi_Q(x)=x_Q(r(x))$ and $\Omega_Q=B_{T_2^Q}(o)\setminus B_{T_1^{Q}}(o)$. Using the co--area 
formula \eqref{co-area} we get
\begin{eqnarray*}
\int_{\Omega_Q}(|\nabla\varphi_Q|^2-q\varphi_Q^2)e^{-f}d\mathrm{vol}_{\Sigma}&=&
\int_{\Omega_Q}(x_Q^{\prime}(r)^2-qx_Q(r)^2)e^{-f}d\mathrm{vol}_\Sigma\\
&=&\int_{T_1^Q}^{T_2^Q}(x^{\prime}_{Q}(r)^2v(r)-x_Q(r)^2Q(r)v(r))dr\\
&=&-\int_{T_1^Q}^{T_2^Q}x_Q(r)((v(r)x_Q^{\prime}(r))^{\prime}+Q(r) v(r)x_Q(r))dr\\
&=&0.
\end{eqnarray*}
The conclusion follows now by strict domain monotonicity.
\end{proof}

The previous results, applied in the setting of $f$--minimal hypersurfaces allow us to obtain the following
\begin{theorem}\label{fIndFinfMin}
Let $M_f$ be a complete weighted manifold with $\overline{\mathrm{Ric}}_f\geq k>0$. Then there is no complete $f$--minimal hypersurface $\Sigma$ immersed in $M_f$ with  $Ind_f(\Sigma)<+\infty$ provided one of the following conditions holds
 \begin{enumerate}
  \item $\mathrm{vol}_f(\Sigma)=+\infty$ and $\mathrm{vol}_f(B_r(o))\leq C r^a$ for any $r\geq r_0$ and some positive constants $C$, $r_0$ and $a$;
  \item  $\mathrm{vol}_f(\partial B_r)^{-1}\notin L^{1}(+\infty)$ and $|A|\notin L^{2}(\Sigma, e^{-f}d\mathrm{vol}_{\Sigma})$.
\end{enumerate}
\end{theorem}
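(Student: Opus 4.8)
The plan is to argue by contradiction in each of the two cases, using Devyver's characterization (Proposition \ref{Baptiste}) to convert the finiteness of the $f$--index into a lower bound on the bottom of the spectrum of $L_f$ outside a compact set, and then to contradict this bound by the \emph{upper} bounds coming from Propositions \ref{fIndDCZ} and \ref{fIndBMR}. Throughout I set
\[
q(x)=|\mathbf{A}|^2+\overline{\mathrm{Ric}}_f(\nu,\nu),
\]
so that $L_f=-\Delta_f-q$. The elementary observation driving both cases is that the hypothesis $\overline{\mathrm{Ric}}_f\geq k>0$ forces $q\geq|\mathbf{A}|^2+k\geq k>0$ pointwise on $\Sigma$; in particular $q$ is continuous, nonnegative and lies in $L^\infty_{loc}(\Sigma)$.

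Assume then that $\Sigma$ is a complete $f$--minimal hypersurface with $\mathrm{Ind}_f(\Sigma)=\mathrm{Ind}^{L_f}(\Sigma)<+\infty$. First note that in either case the hypotheses force $\Sigma$ to be non--compact: in case (1) this is immediate from $\mathrm{vol}_f(\Sigma)=+\infty$, while in case (2) a compact $\Sigma$ would have $|\mathbf{A}|$ bounded and finite $f$--volume, hence $|A|\in L^2(\Sigma,e^{-f}d\mathrm{vol}_\Sigma)$, against assumption. Since $q\in L^\infty_{loc}(\Sigma)$, Proposition \ref{Baptiste} applies to $L=L_f$, and the finiteness of its Morse index yields, through the equivalence (i)$\Leftrightarrow$(iii), a compact set $\Omega_0\subset\subset\Sigma$ with
\[
\lambda_1^{L_f}(\Sigma\setminus\Omega_0)\geq0.
\]
This is the inequality I intend to contradict.

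In case (1) I would apply Proposition \ref{fIndDCZ}(1) to this very set $\Omega_0$: under the polynomial growth $\mathrm{vol}_f(B_r(o))\leq Cr^a$ together with $\mathrm{vol}_f(\Sigma)=+\infty$, it gives $\lambda_1^f(\Sigma\setminus\Omega_0)=0$ for the plain $f$--Laplacian. Comparing Rayleigh quotients on $\Sigma\setminus\Omega_0$ and using $q\geq k$, one has $\lambda_1^{L_f}(\Sigma\setminus\Omega_0)\leq\lambda_1^f(\Sigma\setminus\Omega_0)-k=-k<0$, contradicting the lower bound above. In case (2) I would instead invoke Proposition \ref{fIndBMR} with the same $q$. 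Its hypothesis (i), namely $\mathrm{vol}_f(\partial B_r)^{-1}\notin L^1(+\infty)$, is assumed directly; its hypothesis (ii), $q\notin L^1(\Sigma,e^{-f}d\mathrm{vol}_\Sigma)$, follows from
\[
\int_\Sigma q\,e^{-f}d\mathrm{vol}_\Sigma\geq\int_\Sigma|\mathbf{A}|^2e^{-f}d\mathrm{vol}_\Sigma=+\infty,
\]
the last equality being exactly the assumption $|A|\notin L^2(\Sigma,e^{-f}d\mathrm{vol}_\Sigma)$. Proposition \ref{fIndBMR} then yields $\lambda_1^{L_f}(\Sigma\setminus\Omega_0)<0$, again contradicting Devyver's lower bound.

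The substance of the argument is carried entirely by the three cited propositions, so the only genuine work left is the spectral comparison in case (1) and the non--integrability check in case (2), both of which reduce to the single pointwise bound $q\geq|\mathbf{A}|^2+k$. I expect the one point to watch to be purely organizational: one must feed into Propositions \ref{fIndDCZ} and \ref{fIndBMR} precisely the compact set $\Omega_0$ produced by Proposition \ref{Baptiste}, which is legitimate exactly because those two results are stated for an \emph{arbitrary} compact subset.
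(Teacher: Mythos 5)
Your argument is correct and follows essentially the same route as the paper: both cases are reduced to the contradiction between the lower bound $\lambda_1^{L_f}(\Sigma\setminus\Omega)\geq 0$ coming from Proposition \ref{Baptiste} and the upper bounds supplied by Proposition \ref{fIndDCZ}(1) (via the Rayleigh-quotient comparison using $q\geq k$) and Proposition \ref{fIndBMR} (via $q\geq|\mathbf{A}|^2$), which is precisely how the paper proceeds, only written there more tersely.
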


\begin{remark}
\rm{Observe that if $\mathrm{vol}_f(\Sigma)<+\infty$ then $\mathrm{vol}_f(\partial B_r)^{-1}\notin L^1(+\infty)$. Indeed,by the Cauchy--Schwartz inequality, we have that for all $R>0$ and $r>R$,
\[
\ \int_{R}^{r}\frac{ds}{\mathrm{vol}_f(\partial B_s)}\int_{R}^{r}\mathrm{vol}_f(\partial B_s)ds\geq (r-R)^2.
\]
Taking now the limit as $r\to\infty$ the conclusion follows. Hence, the case of finite $f$--volume is taken into account in part (2) of the theorem.
}
\end{remark}
\begin{proof}(of Theorem \ref{fIndFinfMin})
Assume that $\mathrm{Ind}_f(\Sigma)<+\infty$, $\mathrm{vol}_f(\Sigma)=+\infty$ and
\[
\mathrm{vol}_f(B_r(o))\leq C r^a 
\]
for any $r\geq r_0$ and some positive constants $C$, $r_0$ and $a$. Then, for all $r\geq r_0$,
\[
0\geq \inf_{\Sigma\backslash B_r(o)}(|A|^2+\overline{\mathrm{Ric}}_f(\nu,\nu)).
\]
This gives a contradiction in case $\overline{\mathrm{Ric}}_f\geq k>0$.

To get the proof of the remaining case we only have to observe that if 
$|A|\notin L^2(\Sigma, e^{-f}d\mathrm{vol}_{\Sigma})$ then 
$q=|A|^2+\overline{\mathrm{Ric}}_f(\nu,\nu)\notin L^1(e^{-f}d\mathrm{vol}_{\Sigma})$. 
Hence under the assumption $\mathrm{vol}_f(\partial B_r(o))^{-1}\notin L^1(+\infty)$ we can apply Proposition 
\ref{fIndBMR} to conclude the proof.
\end{proof}

\section{Finiteness results and weigthed Li--Tam theory}\label{Fin&Li-Tam}

Finiteness results for $L^2$ harmonic sections have been extensively investigated by many authors under different 
assumptions. With respect to this, we quote \cite{LW1}, \cite{LW2}, \cite{Car}, \cite{PRS-RevMatIb}, 
\cite{PRS_Progress}.

The abstract finiteness result we are going to present is an adaptation to the weighted setting of Theorem 1.1 in 
\cite{PRS-RevMatIb}; see also \cite{PRS_Progress}.

\begin{theorem}\label{f-finiteness}
Let $M_f$ be a connected, complete $m$--dimensional weighted manifold and let $E$ be a Riemannian vector bundle of rank $l$ over $M$. Denote by $\Gamma(E)$ the space of its smooth sections. Having fixed 
\[
\ a(x)\in C^{0} (M),\quad A\in\mathbb{R},\quad H\geq p
\]
satisfying the further restrictions
\begin{equation*}
p\geq A+1,\quad p>0,
\end{equation*}
let $V=V(a, f, A, p, H)\subset\Gamma (E)$ be any vector space with the following two properties.\\
\begin{enumerate}
	\item[(i)] Every $\xi\in V$ has the unique continuation property, i.e., $\xi$ is the null section whenever it vanishes on some domain.
	\item[(ii)] For any $\xi\in V$, the locally Lipschitz function $u=|\xi|$ satisfies
\begin{equation*}
\begin{cases}
u(\Delta_f u+ a(x)u)+ A|\nabla u|^2\geq 0\, \mathrm{weakly\,on}\, M\\
u\in L^{2p}(M_f).
\end{cases}
\end{equation*}
\end{enumerate}
If there exists a solution $0<\varphi\in Lip_{loc}$ of the differential inequality
\begin{equation}\label{ineq2}
\Delta_f \varphi+ Ha(x)\varphi\leq0
\end{equation}
weakly outside a compact set $K\subset M$, then
\[
\ \mathrm{dim}\, V<+\infty.
\]
\end{theorem}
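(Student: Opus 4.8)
The plan is to adapt the proof of the unweighted Theorem 1.1 of \cite{PRS-RevMatIb}, replacing $\Delta$ by $\Delta_f$ and $d\mathrm{vol}_M$ by $e^{-f}d\mathrm{vol}_M$ throughout; this is harmless because $\Delta_f$ is self--adjoint with respect to $e^{-f}d\mathrm{vol}_M$, so every integration by parts goes through unchanged. The argument has an analytic core (a uniform Caccioppoli inequality for the elements of $V$) followed by an abstract counting step. I would first reduce the bundle inequality (ii) to a scalar one. Setting $u=|\xi|$ and $w=u^{p}$, the chain rule $\Delta_f w = p u^{p-1}\Delta_f u + p(p-1)u^{p-2}|\nabla u|^2$ combined with (ii) gives, on $\{u>0\}$,
\[
\Delta_f w + p\,a(x)\,w \;\geq\; p(p-1-A)\,u^{p-2}|\nabla u|^2 \;\geq\; 0,
\]
where the last step is \emph{exactly} the assumption $p\geq A+1$. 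A standard approximation (replacing $u$ by $(u^2+\varepsilon)^{1/2}$ and letting $\varepsilon\to0$) promotes this to $\Delta_f w + p\,a(x)\,w\geq 0$ weakly on all of $M$, and $w\in L^{2}(M,e^{-f}d\mathrm{vol}_M)$ since $u\in L^{2p}$.

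Next I would exploit the positive supersolution $\varphi$. Put $\widetilde\varphi=\varphi^{\,p/H}$; because $H\geq p$ we have $p/H\leq1$, so the Kato correction term $\tfrac{p}{H}\bigl(\tfrac{p}{H}-1\bigr)\varphi^{-2}|\nabla\varphi|^2$ is nonpositive and hence $\widetilde\varphi^{-1}\Delta_f\widetilde\varphi\leq \tfrac{p}{H}\varphi^{-1}\Delta_f\varphi\leq -p\,a(x)\leq w^{-1}\Delta_f w$ outside $K$. This yields the pointwise (hence weak) inequality $\widetilde\varphi\,\Delta_f w - w\,\Delta_f\widetilde\varphi\geq0$, which, writing $g=w/\widetilde\varphi$ and using the identity $\widetilde\varphi\,\Delta_f w - w\,\Delta_f\widetilde\varphi = e^{f}\,\mathrm{div}\bigl(e^{-f}\widetilde\varphi^{\,2}\nabla g\bigr)$, means that for $0\le\phi\in C_c^\infty$ one has $\int \widetilde\varphi^{\,2}\langle\nabla g,\nabla\phi\rangle\,e^{-f}\le0$. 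Testing with $\phi=\rho^2 g$ ($\rho$ a cut--off) and using Young's inequality to absorb the cross term gives the weighted Caccioppoli estimate
\[
\int_{M}\rho^2\,\widetilde\varphi^{\,2}\,|\nabla g|^2\,e^{-f}\,d\mathrm{vol}_M \;\leq\; 4\int_{M}|\nabla\rho|^2\,u^{2p}\,e^{-f}\,d\mathrm{vol}_M,
\]
since $\widetilde\varphi^{\,2}g^2=u^{2p}$, with a constant independent of $\xi\in V$. The set $K$, outside which alone $\varphi$ is a supersolution, is absorbed by a separate interior estimate using the local boundedness of $\varphi$ away from $0$ and $\infty$ together with $u\in L^{2p}_{loc}$.

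Finally I would feed this uniform inequality into the abstract, Li--type finite--dimensionality lemma, namely the weighted analogue of the counting argument in \cite{PRS_Progress}. Given any finite--dimensional $W\subset V$ with $\dim W=N$, a pigeonhole argument applied to the quadratic forms $\xi\mapsto\int_{B_R}|\xi|^{2p}e^{-f}$ produces, at a suitable scale $R$, a nonzero $\xi\in W$ whose weighted $L^{2p}$--mass on $B_R$ is an arbitrarily small fraction of its mass on $B_{2R}$; the Caccioppoli inequality then forces a reverse--doubling incompatible with the global finiteness of $\int_M u^{2p}e^{-f}$ unless $N$ is bounded, while the unique continuation property (i) prevents any nonzero element from degenerating to the zero section on a ball. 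Thus $N$ is capped by a constant depending only on $a,A,p,H$ and the geometry, i.e. $\dim V<+\infty$.

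The step I expect to be the main obstacle is the analytic core: establishing the Caccioppoli inequality with a constant \emph{uniform} over $\xi\in V$ while correctly handling the compact set $K$, and verifying that the combined constraints $H\geq p\geq A+1$ make every error term absorbable (the sign of the Kato term requires $p/H\le1$, and the refined Kato reduction requires $p\ge A+1$). Once this uniform inequality is secured, the passage to $\dim V<+\infty$ is a formal, if delicate, application of the abstract lemma.
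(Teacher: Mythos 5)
Your overall architecture (reduce the bundle inequality to a scalar differential inequality for $u^{p}$ using $p\geq A+1$; exploit the supersolution $\varphi$ through the quotient $g=u^{p}\varphi^{-p/H}$, where $H\geq p$ keeps the Kato correction term of $\varphi^{p/H}$ with the right sign; then invoke an abstract dimension bound) matches the paper's strategy, and your Caccioppoli estimate for $g$ is essentially the ingredient the paper uses, following Lemma 2.7 of \cite{PRS-RevMatIb}, to prove the a priori annulus estimate $\int_{A(R,R+1)}\bar\psi^{2p}e^{-f}\leq C''\int_{B_R}\bar\psi^{2p}e^{-f}$. However, there is a genuine gap in your counting step. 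The weighted version of Li's lemma that drives the finite--dimensionality argument produces, for a finite--dimensional $T\subset V$, a section $\bar\xi$ with
\[
(\dim T)^{\min(1,p)}\int_{B_R}\bar\psi^{2p}e^{-f}\leq \mathrm{vol}_f(B_R)\,\min\{l,\dim T\}^{\min(1,p)}\sup_{B_R}\bar\psi^{2p},
\]
i.e. it compares an integral to a \emph{supremum}, with the dimension appearing as the gain. To close the loop one must convert $\sup_{B_R}\bar\psi^{2p}$ back into $\int_{B_{R+1}}\bar\psi^{2p}e^{-f}$, and this requires a local $L^{2p}$ mean value inequality $\sup_{B_\delta}u^{2p}\leq C\int_{B_{2\delta}}u^{2p}e^{-f}$, uniform over $V$. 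Your Caccioppoli inequality alone does not yield this: a bound on $\int\rho^2\widetilde\varphi^{\,2}|\nabla g|^2e^{-f}$ gives no comparison between the mass of $u^{2p}$ on $B_R$ and on $B_{2R}$ without an additional Poincar\'e-- or Moser--type input, so the claimed ``reverse--doubling incompatible with global finiteness'' does not follow as stated.

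The missing ingredient in the paper's proof is precisely this: on every small ball one has $\lambda_1^{L_H}(B_{3\delta}(x))>0$, hence a local positive solution $w$ of $\Delta_f w+Ha(x)w=0$; gauging by $w$ (i.e. passing to $v=u^{p}w^{-p/H}$ and the modified weight $h=-\log w^{2p/H}+f$) turns $u^p$ into a genuinely $\Delta_h$--subharmonic function, to which Moser iteration with the always--available local weighted $L^2$--Sobolev inequality applies, producing the mean value inequality with a constant controlled by $w$ and the local geometry. Note that this gauge is needed \emph{everywhere}, in particular inside $B_R\supset K$ where your global supersolution $\varphi$ is unavailable; your proposal never introduces these local solutions, and a ``separate interior estimate using local boundedness of $\varphi$'' cannot substitute for them there. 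Your scalar reduction and the quotient construction are sound, but without the local sup--estimate the pigeonhole argument cannot be completed, so the proof as proposed does not go through.
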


\begin{proof}[Outline of the proof of Theorem \ref{f-finiteness}] We follow the arguments in Theorem 1.1 in \cite{PRS-RevMatIb}, and we refer to it for more details. Choose $R\gg1$ in such a way that $K\subset B_{R}(o)$ and, therefore, inequality \eqref{ineq2} holds in $M\setminus B_{R}(o)$. Note that, by unique continuation, the restriction map
\begin{align*}
V&\to\Gamma(\left.E\right|_{B_{R}})\\
\xi&\mapsto\left.\xi\right|_{B_{R}}
\end{align*}
is an injective homomorphism. Use the same symbol $V$ to denote the image of $V$ in $\Gamma (\left.E\right|_{B_{R}})$. Easily adapting to the weighted setting the extension, obtained in \cite[Lemma 2.1]{PRS-RevMatIb}, of a classical result by P. Li we obtain that if $T\subset V$ is any finite dimensional subspace, then there exists a (non--zero) section $\bar{\xi}\in T$ such that, setting $\bar{\psi}=|\bar{\xi}|$, it holds
\begin{equation}\label{f-LiLem}
(\mathrm{dim} T)^{\min(1, p)}\int_{B_{R}}\bar{\psi}^{2p}e^{-f}d\mathrm{vol}_{M}\leq\mathrm{vol}_f(B_{R})\min\left\{l, \mathrm{dim} T\right\}^{\min(1, p)}\sup_{B_{R}}\bar{\psi}^{2p}.
\end{equation}
Observe now that, on every sufficiently small closed ball,
\[
\ \lambda_1^{L_{H}}(B_{3\delta}(x))>0,
 \]
where $L_{H}=-\Delta_f-Ha(x)$, and therefore there exists $w>0$ solution on $B_{3\delta}(x)$ of 
\[
\ \Delta_f w+ Ha(x)w=0.
\]
Let $u\geq0$ be a locally Lipschitz, weak solution of
\begin{equation}\label{ineq1}
u(\Delta_f u+ a(x)u)+A|\nabla u|^2\geq 0.
\end{equation} 
Applying the computational Lemma 9 in \cite{Rim} with $\beta=p\geq A+1$, $\alpha=\frac{p}{H}$, setting $h=-\log w^{2\alpha}+f$, we deduce that $$v=u^{\beta}w^{-\alpha}$$ satisfies 
\begin{equation}\label{ineqh}
\Delta_h v\geq 0 \quad \mathrm{weakly\, on \,} B_{3\delta}(x)
\end{equation}
and 
\begin{equation}\label{eqnormhf}
\left\|v\right\|_{L^{2}(M_h)}=\left\|u^p\right\|_{ L^{2}(M_f)}.
\end{equation}
Since, locally, a weighted $L^2$--Sobolev inequality is always available, reasoning as in Section 2 in \cite{PRS-RevMatIb}, we are able to obtain the following local weighted $L^1$--mean value inequality for solutions $v$ of \eqref{ineqh}
\[
\ \sup_{B_{\delta}(x)}v^{2}\leq C\int_{B_{2\delta}(x)}v^2e^{-h}d\mathrm{vol}_{M}
\]
for some constant $C>0$ depending on $\left.w\right|_{\overline{B}_{2\delta}(x)}$ and the geometry of $B_{2\delta}(x)$. Recalling the definition of $v$, we deduce from the previous inequality and \eqref{eqnormhf} the following weighted $L^{p}$--mean value inequality for solutions $u$ of \eqref{ineq2}
\[
\ \sup_{B_{\delta}(x)}u^{2p}\leq C^{\prime}\int_{B_{2\delta}}u^{2p}e^{-f}d\mathrm{vol}_{M},
\]
where 
\[
\ C^{\prime}=\left(\sup_{B_{\delta}(x)}w^{\frac{p}{H}}\right)^2C.
\]
The local inequalities patch together and, in the special case of $\bar{\psi}$, give
\[
\ \sup_{B_{R}(o)}\bar{\psi}^{2p}\leq C^{\prime}\int_{B_{R+1}(o)}\bar{\psi}^{2p}e^{-f}d\mathrm{vol}_{M}.
\]
Inserting into \eqref{f-LiLem} we obtain
\begin{eqnarray}
(\mathrm{dim} T)^{\min(1, p)}&&\int_{B_{R}}\bar{\psi}^{2p}e^{-f}d\mathrm{vol}_{M}\leq C^{\prime}\mathrm{vol}_f(B_{R})\min\left\{l, \mathrm{dim} T\right\}^{\min(1, p)}\nonumber\\
&&\times\left(\int_{B_{R}}\bar{\psi}^{2p}e^{-f}d\mathrm{vol}_{M}+\int_{A(R, R+1)}\bar{\psi}^{2p}e^{-f}d\mathrm{vol}_{M}\right)\label{f-LiLem2}
\end{eqnarray}
where $A(R,R+1)$ is the annulus $B_{R+1}\setminus B_{R}$. Now considering a suitable combination of $u$ and $\varphi$, adapting the proof of Lemma 2.7 in \cite{PRS-RevMatIb} to the weighted setting in a similar way to what we just did, we obtain a weighted integral, a--priori estimate on annuli of the type
\[
\ \int_{A(R,R+1)}\bar{\psi}^{2p}e^{-f}d\mathrm{vol}_{M}\leq C^{\prime\prime}\int_{B_{R}}\bar{\psi}^{2p}e^{-f}d\mathrm{vol}_{M},
\] 
for some constant $C^{\prime\prime}$ independent of $\bar{\psi}$. From this latter and \eqref{f-LiLem2} we finally deduce
\[
\ \mathrm{dim}T\leq C^{\prime\prime\prime}\min\left\{l, \mathrm{dim}T\right\},
\]
from some $C^{\prime\prime\prime}$ depending only on the geometry of $B_{R}$. This proves that any finitely generated subspace $T$ of $V$ has dimension which is bounded by a universal constant, depending only on the rank $l$ of $E$ and on the weighted geometry of $B_{R}$. The same bound must work for the dimension of the whole $V$.
\end{proof}

In the non--weighted case there is the well--known connection, developed by P. Li and L.--F. Tam (see e.g. \cite{LT}), between $L^2$ harmonic $1$--forms, the number of non--parabolic ends, and the Morse index of the operator $-\Delta-a(x)$, where $-a(x)$ is the smallest eigenvalue of the Ricci tensor at $x$. In Theorem \ref{f-LiTam} below we shall see that an analogous relation holds in the weighted setting. This can be easily obtained with minor changes to the proofs in \cite{LT}.

Recall that an end $E$ of a weighted manifold $M_f$ with respect to a fixed compact set $D$ with smooth boundary is said to be $f$--parabolic if and only if its double is $f$--parabolic or, equivalently, if every positive $f$--superharmonic function $u$ on $E$ satisfying $\partial u/\partial \nu\geq 0$ on $\partial E$, $\nu$ being the unit outward normal to $\partial E$, is constant. Otherwise the end will be called non--$f$--parabolic. Non--$f$--parabolicity of the end $E$ can be also characterized by the existence of a positive minimal Green kernel $G_{f}$ for $\Delta_f$, satisfying Neumann boundary conditions on $\partial E$. As we said above, the following result permits to control the number of non--$f$--parabolic ends by the dimension of the space of bounded $f$--harmonic functions with finite Dirichlet weighted integral. The idea of the proof is the same as in the non--weighted case. Given two distinct $f$--parabolic ends $E_A$ and $E_B$, one can construct bounded $f$--harmonic functions $g_A$ on $M_f$ 
with finite Dirichlet weighted integral such that 
\[
\ \sup_{E_A}g_A=1\quad\quad\inf_{E_B}g_A=0,
\]
and these turn out to be linearly independent.
\begin{theorem}\label{f-LiTam}
Let $\mathcal{H}_{\mathcal{D}}^{\infty}(M_f)$ denote the space of bounded $f$--harmonic functions with finite Dirichlet weighted integral on $M_f$, and by $N(D)$ the number of non--$f$--parabolic ends of $M_f$ with respect to the relatively compact domain $D$. Then 
\[
\ N(D)\leq \mathrm{dim}\mathcal{H}_{\mathcal{D}}^{\infty}(M_f).
\]
It follows that, if $\mathcal{H}_{\mathcal{D}}^{\infty}(M_f)$ is finite dimensional, then $M_{f}$ has finitely many non--$f$--parabolic ends, whose number is bounded above by $\mathrm{dim}\mathcal{H}_{\mathcal{D}}^{\infty}(M_{f})$.
\end{theorem}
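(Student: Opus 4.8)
The plan is to transcribe the Li--Tam construction of \cite{LT} to the diffusion operator $\Delta_f$. The point to keep in mind is that $\Delta_f$ is a locally uniformly elliptic second order operator, self--adjoint with respect to the weighted measure $e^{-f}d\mathrm{vol}_M$, and differing from $\Delta$ in local coordinates only by a first order term with locally bounded coefficients. Hence the entire analytic toolkit of the unweighted theory remains available: interior elliptic estimates, the Harnack inequality, the maximum principle, solvability of the Dirichlet problem $\Delta_f u=0$ with continuous boundary data on relatively compact domains, and the existence of a positive minimal Green kernel $G_f$ with Neumann conditions on each non $f$--parabolic end. I would first record, once and for all, that these facts carry over unchanged; this is where the weighted structure enters, but only superficially.

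\emph{Construction of the candidate functions.} Fix the relatively compact domain $D$ with smooth boundary and let $E_1,\dots,E_N$ be any finite family of $N\leq N(D)$ distinct non $f$--parabolic ends with respect to $D$. Choose a smooth compact exhaustion $D\subset\Omega_1\subset\Omega_2\subset\cdots$ of $M$, with $j$ large enough that the boundary $\partial\Omega_j$ splits into components, each contained in a single end. For each fixed $i$ I would solve on $\Omega_j$ the Dirichlet problem for $\Delta_f$ with boundary value $1$ on the components of $\partial\Omega_j$ lying in $E_i$ and $0$ on all the other components, obtaining $f$--harmonic functions $g_{i,j}$ with $0\leq g_{i,j}\leq 1$ by the maximum principle. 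Interior elliptic estimates and the Harnack inequality then produce, along a subsequence, a locally uniform limit $g_i$, which is $f$--harmonic on all of $M_f$ and bounded, $0\leq g_i\leq 1$.

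\emph{Energy, asymptotics and independence.} Two properties must be extracted. For the finiteness of the weighted Dirichlet integral, testing the equation against $g_{i,j}$ and integrating by parts shows that $\int_{\Omega_j}|\nabla g_{i,j}|^2e^{-f}d\mathrm{vol}_M$ is controlled by the weighted capacity of the end $E_i$ relative to $D$, which is finite; passing to the limit gives a uniform energy bound, so $g_i\in\mathcal{H}_{\mathcal{D}}^{\infty}(M_f)$. For the non--triviality and the correct behaviour at infinity, I would compare $g_{i,j}$ with the $f$--harmonic end--barrier of $E_i$ whose energy realizes that capacity: non $f$--parabolicity of $E_i$ makes the capacity \emph{positive}, hence the barrier non--constant with infimum strictly below $1$, and the comparison forces the asymptotic boundary value of $g_i$ to equal $1$ along $E_i$ and $0$ along each $E_k$ with $k\neq i$ (such boundary values at infinity exist because bounded $f$--harmonic functions with finite weighted Dirichlet integral stabilize along every end). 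Linear independence is then immediate: a relation $\sum_i c_i g_i\equiv 0$, read off at infinity along a fixed end $E_k$, yields $c_k=0$. Since $N$ was an arbitrary finite number of non $f$--parabolic ends, we conclude $N(D)\leq\dim\mathcal{H}_{\mathcal{D}}^{\infty}(M_f)$, whence the finiteness statement follows at once.

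The genuinely delicate point I expect is the simultaneous control of the two limiting properties: producing a limit $g_i$ that is non--constant, with asymptotic value $1$ on $E_i$ and $0$ on the remaining ends, while keeping its weighted Dirichlet integral finite. Both hinge on the correct weighted formulation of $f$--parabolicity of an end, equivalently on the positivity of the weighted capacity and the existence of the Neumann Green kernel $G_f$; so the real work is to make the capacity/barrier comparison rigorous for $\Delta_f$ and to justify the existence of asymptotic boundary values. Once these are in place, the remaining steps are a routine transcription of the arguments in \cite{LT}.
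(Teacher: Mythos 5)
Your proposal is correct and follows essentially the same route as the paper: the paper's own proof is exactly the Li--Tam exhaustion construction of bounded $f$--harmonic limits $g_i$ with finite weighted Dirichlet integral, normalized so that $\sup_{E_i}g_i=1$ and $\inf_{E_k}g_i=0$ on the other non--$f$--parabolic ends, with linear independence read off from these asymptotic values. The two delicate points you isolate --- positivity of the weighted capacity of a non--$f$--parabolic end and the stabilization of finite--energy bounded $f$--harmonic functions along non--$f$--parabolic ends --- are precisely what the paper implicitly relies on when it asserts that the arguments of \cite{LT} carry over with minor changes.
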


Let $\delta_f=\delta+i_{\nabla f}$, and denote with $\Delta_H^{f}=\delta_f d+ d\delta_f$ the Hodge 
$f$--Laplacian on $M_f$. 
We have that the following $f$--Weitzenbock formula for $1$--forms holds
\[
\ \frac{1}{2}\Delta_f|\omega|^2=-\left\langle \Delta_{H}^{f}\omega, \omega\right\rangle+ |D\omega|^2+\mathrm{Ric}_{f}(\omega^{\sharp}, \omega^{\sharp}).
\]
In particular, if $\omega\in\mathcal{H}_1(M_{f})=\left\{1\mathrm{-forms\,}\omega \,|\, \Delta_{H}^{f}\omega=0\right\}$, we obtain 
\begin{equation}\label{f-Weitzen}
\frac{1}{2}\Delta_f|\omega|^2=|D\omega|^2+\mathrm{Ric}_{f}(\omega^{\sharp},\omega^{\sharp}).
\end{equation}
Thus, let $\mathrm{Ric}_{f}\geq -a(x)$ for some continuous function $a(x)$, and consider the vector
space $L^{2,f}\mathcal{H}^{1}(M_{f})=\left\{\xi\in\mathcal{H}^{1}(M_f) \,|\,\, |\xi|\in L^{2}(M_f)\right\}$. Using Kato inequality, we get that, for any $\xi\in L^{2,f}\mathcal{H}^{1}(M_f)$, the locally Lipschitz function $u=|\xi|$ satisfies
\begin{equation*}
\begin{cases}
u(\Delta_f u+ a(x)u)\geq 0\,\,\, \mathrm{weakly\,on}\, M\\
\int_{M}u^{2}e^{-f}d\mathrm{vol}_M<+\infty.
\end{cases}
\end{equation*}

Moreover, note that equation $\Delta_{H}^{f}\omega=0$ is equivalent to the equation $\Delta_{H}\omega=F(x,\omega, d\omega)$ with $F$ satisfying the structural conditions of Aronszajn--Cordes; see e.g. Appendix A in \cite{PRS_Progress}. This suffices to guarantee that every $\xi\in\mathcal{H}^{1}(M_f)$ has the unique continuation property. 

We are thus in a situation where Theorem \ref{f-finiteness} can be applied. Hence, using Proposition \ref{Baptiste}, 
we obtain the following consequence of Theorem \ref{f-LiTam}. Compare also with \cite{MW1} where some related 
results are obtained.

\begin{corollary}\label{finnonfparends}
Let $M_f$ be a complete non--compact weighted manifold satisfying
\[
\ \mathrm{Ric}_f\geq -\,a(x)
\]
for some nonnegative continuous function $a(x)$, and let $L=-\Delta_f-a(x)$. Suppose furthermore that $L$ has finite 
Morse index. Then $M_f$ has at most finitely many non--$f$--parabolic ends.
\end{corollary}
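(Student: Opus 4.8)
The plan is to assemble the weighted Li--Tam bound of Theorem \ref{f-LiTam} with the abstract finiteness result of Theorem \ref{f-finiteness}, using Devyver's characterization (Proposition \ref{Baptiste}) to manufacture the supersolution that the latter requires. By Theorem \ref{f-LiTam} the number $N(D)$ of non $f$--parabolic ends satisfies $N(D)\leq\dim\mathcal{H}_{\mathcal{D}}^{\infty}(M_f)$, so it suffices to prove that $\dim\mathcal{H}_{\mathcal{D}}^{\infty}(M_f)<+\infty$. To this end I would pass from bounded $f$--harmonic functions to $L^{2}$ $f$--harmonic $1$--forms via the exterior differential: if $g\in\mathcal{H}_{\mathcal{D}}^{\infty}(M_f)$, then $\Delta_f g=0$ and, since $d$ commutes with the Hodge $f$--Laplacian $\Delta_H^{f}$, the form $\omega=dg$ lies in $\mathcal{H}^{1}(M_f)$; moreover $|\omega|=|\nabla g|$, so the finiteness of the Dirichlet weighted integral of $g$ forces $\omega\in L^{2,f}\mathcal{H}^{1}(M_f)$. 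The linear map $g\mapsto dg$ has kernel equal to the constants, since $M$ is connected, whence
\[
\dim\mathcal{H}_{\mathcal{D}}^{\infty}(M_f)\leq\dim L^{2,f}\mathcal{H}^{1}(M_f)+1,
\]
and the problem reduces to the finiteness of $\dim L^{2,f}\mathcal{H}^{1}(M_f)$.

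For the latter I would apply Theorem \ref{f-finiteness} to the bundle $E=T^{*}M$ (of rank $l=m$) and to the space $V=L^{2,f}\mathcal{H}^{1}(M_f)$, with the parameter choice $A=0$, $p=1$, $H=1$, which satisfies the structural restrictions $p\geq A+1$ and $p>0$. Hypothesis (i) holds because every $\xi\in\mathcal{H}^{1}(M_f)$ has the unique continuation property, the equation $\Delta_H^{f}\omega=0$ being of Aronszajn--Cordes type. Hypothesis (ii) is exactly the content of the discussion preceding the statement: combining the $f$--Weitzenbock formula \eqref{f-Weitzen} with the Kato inequality $|\nabla u|^{2}\leq|D\omega|^{2}$ and the curvature bound $\mathrm{Ric}_f\geq -a(x)$, the function $u=|\xi|$ satisfies $u(\Delta_f u+a(x)u)\geq0$ weakly (this is inequality \eqref{ineq1} with $A=0$), while $u\in L^{2}(M,e^{-f}d\mathrm{vol}_M)$ by the very definition of $L^{2,f}\mathcal{H}^{1}(M_f)$.

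It then remains to supply the positive supersolution required by Theorem \ref{f-finiteness}, namely a $0<\varphi\in Lip_{loc}$ solving $\Delta_f\varphi+Ha(x)\varphi\leq0$ weakly outside a compact set. Here is where the finite Morse index hypothesis enters: applying Proposition \ref{Baptiste} to $L=-\Delta_f-a(x)$, finiteness of the Morse index yields a positive $\varphi\in W^{1,2}_{loc}$ with $L\varphi=0$ outside a compact set $K$, that is $\Delta_f\varphi+a(x)\varphi=0$ on $M\setminus K$. With the choice $H=1$ this is precisely inequality \eqref{ineq2}. Theorem \ref{f-finiteness} thus delivers $\dim L^{2,f}\mathcal{H}^{1}(M_f)<+\infty$, and combining with the displayed bound and Theorem \ref{f-LiTam} we conclude $N(D)<+\infty$.

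Since all the analytic machinery is already available, this corollary is essentially an assembly of the preceding results, and I do not expect a deep obstacle. The one point demanding care is the matching of the parameters in Theorem \ref{f-finiteness}: because we work with $p=1$, the natural exponent for $L^{2}$ forms, and the plain Kato inequality yields $A=0$, the supersolution produced by Proposition \ref{Baptiste} solves $\Delta_f\varphi+a(x)\varphi\leq0$ with exactly the factor $H=1$ demanded by \eqref{ineq2}; no spectral gap or refined Kato inequality is needed. One should also verify that the correspondence $g\mapsto dg$ genuinely lands in $\mathcal{H}^{1}(M_f)$, which amounts to the commutation $d\Delta_H^{f}=\Delta_H^{f}d$ together with $\Delta_H^{f}g=0$ for $f$--harmonic $g$, and that the bound of Theorem \ref{f-LiTam} is applied for a fixed relatively compact smooth domain $D$, the resulting finite bound being independent of the choice of $D$.
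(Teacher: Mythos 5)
Your proposal is correct and follows essentially the same route as the paper: bound the number of non $f$--parabolic ends by $\dim\mathcal{H}_{\mathcal{D}}^{\infty}(M_f)$ via Theorem \ref{f-LiTam}, reduce to $\dim L^{2,f}\mathcal{H}^{1}(M_f)+1$ through $g\mapsto dg$, and apply Theorem \ref{f-finiteness} with the plain Kato inequality ($A=0$, $p=H=1$) and the supersolution furnished by Proposition \ref{Baptiste} from the finite Morse index hypothesis. The parameter bookkeeping and the remark that no refined Kato inequality is required match the paper's own discussion.
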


\begin{proof}[Sketch of the proof of Corollary \ref{finnonfparends}]
In order to apply Theorem \ref{f-LiTam} to get the conclusion in the above corollary we have used the following fact. If $u$ is a $f$--harmonic function with finite Dirichlet weigthed integral, then its exterior differential $du$ belongs to $\mathcal{H}_1(M_f)$. Moreover $du=0$ if and only if $u\equiv const$. Hence, we have that 
\[
\ \mathrm{dim}\mathcal{H}_{\mathcal{D}}^{\infty}(M_f)\leq\mathrm{dim}\mathcal{H}_{\mathcal{D}}(M_{f})\leq \mathrm{dim} L^{2,f}\mathcal{H}^{1}(M_f)+1,
\]
where we denote by $\mathcal{H}_{\mathcal{D}}(M_f)$ the space of $f$--harmonic functions with finite Dirichlet weighted integral on $M_f$.
\end{proof}

\begin{remark}
\rm{As observed in \cite{PRS-RevMatIb}, the generality achieved in Theorem \ref{f-finiteness} permits to deal also 
with situation in which we do not have the validity of a refined Kato inequality. This is essential in our case 
since, as observed in Remark 4.2 in \cite{RV}, in general we do not have the validity of any refined Kato 
inequality for $f$--harmonic forms.}
\end{remark}

In order to deduce topological consequences from the finiteness result of the space of bounded $f$--harmonic 
functions with finite weighted Dirichlet integral on $M_f$, we need to find conditions which ensure that all ends of 
$M_f$ are non--$f$--parabolic. This can be done adapting to the weighted setting a result by 
H. D. Cao, Y. Shen, S. Zhu, \cite{CSZ}. See also \cite{BK}, where this result is proved in the more general setting 
of metric measure spaces.
\begin{lemma}\label{f-CSZ}
Let $M_f$ be a complete weighted manifold, and assume that for some $0\leq\alpha<1$, there exists a 
constant $S(\alpha)>0$ such that the weighted $L^2$--Sobolev inequality
\begin{equation}\label{f-SobL2} \left(\int_Mh^{\frac{2}{1-\alpha}}e^{-f}d\mathrm{vol}_M\right)^{1-\alpha}\leq S(\alpha)\int_M|\nabla h|^2e^{-f}d\mathrm{vol}_M
\end{equation}
holds for every smooth function compactly supported in the complement of a compact set $K$. Then every end 
$E$ of $M_f$ is either non--$f$--parabolic or it has finite $f$--volume.
\end{lemma}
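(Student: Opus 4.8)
The plan is to prove the contrapositive of the stated dichotomy: I will show that if an end $E$ is $f$--parabolic, then it must have finite $f$--volume. The starting point is the capacity characterization of $f$--parabolicity, which is the weighted adaptation (via $\Delta_f$) of Grigor'yan's potential theory already invoked in Remark \ref{rem_fvolumegrowth}. Recall that $E$ is $f$--parabolic if and only if its $f$--capacity relative to infinity vanishes, i.e.
\[
\mathrm{cap}_f(E):=\inf\left\{\int_E|\nabla\phi|^2e^{-f}d\mathrm{vol} : \phi\in \mathrm{Lip}_c(\overline{E}),\ \phi\equiv1 \text{ on } \partial E\right\}=0.
\]
Concretely, fixing $o\in M$ with $K\subset B_{R_0}(o)$ and taking $E$ to be an end with $\partial E\subset\partial B_{R_0}(o)$, I would introduce the equilibrium potentials $u_R$, $R>R_0$, defined as the $f$--harmonic functions on $E\cap B_R$ with $u_R\equiv1$ on $\partial E$ and $u_R\equiv0$ on $E\cap\partial B_R$, extended by $0$ outside $B_R$. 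By the maximum principle $0\le u_R\le1$, the family is monotone increasing in $R$, and $f$--parabolicity is equivalent to $u_R\nearrow1$ pointwise on $E$ together with $\int_E|\nabla u_R|^2e^{-f}d\mathrm{vol}\to0$ as $R\to\infty$.

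The main obstacle is that the Sobolev inequality \eqref{f-SobL2} is only available for functions compactly supported in $M\setminus K$, whereas the potentials $u_R$ equal $1$ along $\partial E$ and therefore do not vanish near $K$; extending them by zero would destroy Lipschitz regularity. To circumvent this I would fix, once and for all, a cutoff $\chi\in\mathrm{Lip}(\overline{E})$ with $\chi\equiv0$ on $E\cap B_{R_1}(o)$ and $\chi\equiv1$ on $E\setminus B_{R_1+1}(o)$, choosing $R_1$ large enough that $K\subset B_{R_1}(o)$, so that $|\nabla\chi|\le C$ is supported in the fixed annular region $E\cap\bigl(B_{R_1+1}\setminus B_{R_1}\bigr)$. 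The test functions $w_R:=\chi\,u_R$ then lie in $\mathrm{Lip}_c(\overline{E})$ and vanish on a neighborhood of $K$, hence are admissible in \eqref{f-SobL2}.

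Applying \eqref{f-SobL2} to $h=w_R$ and expanding $\nabla w_R=u_R\nabla\chi+\chi\nabla u_R$, I would bound the right--hand side by $2\int_E u_R^2|\nabla\chi|^2e^{-f}d\mathrm{vol}+2\int_E|\nabla u_R|^2e^{-f}d\mathrm{vol}$. The second term tends to $0$ by $f$--parabolicity, while the first is dominated, using $u_R\le1$, by $C^2\,\mathrm{vol}_f\bigl(E\cap(B_{R_1+1}\setminus B_{R_1})\bigr)$, a constant independent of $R$. Thus the right--hand side stays bounded by some $C_0$ uniformly in $R$, so \eqref{f-SobL2} yields $\int_E w_R^{2/(1-\alpha)}e^{-f}d\mathrm{vol}\le C_0^{1/(1-\alpha)}$ for all large $R$. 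Since $u_R\nearrow1$ we have $w_R\nearrow\chi$, and $\chi\equiv1$ on $E\setminus B_{R_1+1}(o)$; letting $R\to\infty$ and invoking monotone convergence gives $\mathrm{vol}_f\bigl(E\setminus B_{R_1+1}(o)\bigr)\le C_0^{1/(1-\alpha)}<+\infty$. As $E\cap B_{R_1+1}(o)$ is relatively compact, and hence of finite $f$--volume, I conclude $\mathrm{vol}_f(E)<+\infty$, which is exactly the desired alternative.

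I expect two points to require the most care: first, justifying the capacity characterization and the monotone convergence $u_R\nearrow1$ for $\Delta_f$ in the weighted category (which proceeds exactly as in the Laplace--Beltrami case, as already noted for the parabolicity facts in Remark \ref{rem_fvolumegrowth}); and second, the uniform control of the cutoff error term $u_R\nabla\chi$. The latter is the heart of the matter: it is precisely the $\mathrm{vol}_f$--weighted bound on the fixed transition annulus that survives in the limit, and this is what forces the conclusion to be finiteness of $\mathrm{vol}_f(E)$ rather than an outright contradiction.
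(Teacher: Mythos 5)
Your proof is correct and is precisely the weighted adaptation of the Cao--Shen--Zhu/Li--Wang capacity argument that the paper itself defers to by citation (no proof of Lemma \ref{f-CSZ} is given in the text beyond the reference to \cite{CSZ} and \cite{BK}): the equilibrium potentials $u_R$, the cutoff $\chi$ making $\chi u_R$ admissible in \eqref{f-SobL2}, the uniform bound coming from the fixed transition annulus, and the monotone convergence step are exactly the standard ingredients. The only points worth flagging are routine: the inequality is stated for smooth compactly supported functions and must be extended to Lipschitz ones by approximation, and the weighted capacity characterization of $f$--parabolicity carries over verbatim because $\Delta_f$ is the operator of the Dirichlet form $\int|\nabla\cdot|^2e^{-f}d\mathrm{vol}$.
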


\begin{remark}\label{SobL1InfVolEnds}
\rm{Suppose that $M_f$ supports a weighted $L^1$--Sobolev inequality outside a compact set $K$, namely for some 
$\alpha\in\left(1, \frac{m}{m-1}\right]$ there exists a constant $S_1(\alpha)>0$ such that
\begin{equation}\label{f-SobL1}
\left(\int_M h^{\alpha}e^{-f}d\mathrm{vol}_M\right)^{\frac{1}{\alpha}}\leq S_1(\alpha)\int_M|\nabla h|e^{-f}d\mathrm{vol}_M
\end{equation}
for every smooth function $u$ compactly supported in $M\setminus K$. Reasoning as in the non--weighted setting, see e.g. Lemma 7.15 in \cite{PRS_Progress}, one can show that every end with respect to $K$ has infinite $f$--volume and that, if $m\geq3$, \eqref{f-SobL2} holds with 
\[
\ S(\alpha)=\left(\frac{2S_{1}(\alpha)}{2-\alpha}\right)^2.
\]
As a consequence of Lemma \ref{f-CSZ}, it follows that if $M_f$ supports \eqref{f-SobL1} for some $\alpha\in\left(1, \frac{m}{m-1}\right]$ and for every smooth function $u$ compactly supported in $M\setminus K$, then every end of $M_f$ with respect to $K$ is non--$f$--parabolic.}
\end{remark} 

\section{Weighted Hessian comparison theorem}\label{Compar}

Motivated by Remark \ref{SobL1InfVolEnds}, we are interested now in proving, under suitable conditions, the validity 
of a weighted $L^1$--Sobolev inequality for an hypersurface $\Sigma$ isometrically immersed in a weighted 
manifold $M_{f}$. In the non--weighted setting, according to D. Hoffman and J. Spruck, \cite{HoSp}, minimal 
submanifolds of Cartan--Hadamard manifolds enjoy an $L^1$--Sobolev inequality. In this order of ideas, we have to 
address the issue of defining a right concept of weighted sectional curvature. 

In weighted geometry there are good concepts of Ricci and scalar curvature, namely, the Bakry--\'Emery Ricci tensor 
and the Perelman scalar curvature, defined on $M_f$ as $$P_f=R+2\Delta f-|\nabla f|^2,$$
where $R$ is the scalar curvature of $M$. On the other hand, as far as we know, there is no concept of 
sectional curvature associated to a weighted manifold and, in general, to a measure. As observed in \cite{CGY}, 
both $\mathrm{Ric}_f$ and $P_f$, can be viewed as the infinite--dimensional limit of their conformally invariant 
counterparts. Trying to carry out the same process for the full curvature tensor one easily realizes that, 
``letting the dimension go to infinity", the conformally invariant counterpart of the Riemann tensor recovers 
the Riemann tensor itself. This is not so surprising from the viewpoint of sectional curvature, since sectional 
curvature only takes into account two--dimensional subspaces, and hence the dimension plays no role in defining this concept. This informal discussion suggests that a good concept of sectional curvature in weighted geometry should be the sectional curvature itself. This assertion is supported by the following comparison theorem.

\begin{theorem}\label{HessfCompTh}
Let $M_f^{m}$ be a complete weighted $m$--dimensional manifold. Having fixed a reference point $o\in M$, let $r(x)=\mathrm{dist}_M(x, o)$ and let $D_o=M\setminus \mathrm{cut}(o)$ be the domain of the normal geodesic coordinates centered at $o$. Given a smooth even function $G$ on $\mathbb{R}$, let $h$ be the solution of the Cauchy problem
\begin{equation}\label{CauchyMod}
\begin{cases}
h^{\prime\prime}-G h=0\\
h(0)=0,\quad h^{\prime}(0)=1
\end{cases}
\end{equation}
and let $I=\left[0,r_0\right)\subseteq \left[0, +\infty\right)$ be the maximal interval where $h$ is positive. Suppose that the radial sectional curvature of $M$, that is the sectional curvature of $2$--planes containing $\nabla r$, satisfies
\begin{equation}\label{SectComp}
\mathrm{Sect}_{\mathrm{rad}}\geq - G(r(x))\quad (\mathrm{resp.}\, \leq)
\end{equation}
on $B_{r_0}(o)$ and, furthermore, assume that
\begin{equation}\label{Contrdrf}
\eta(r)=\left\langle \nabla r, \nabla f\right\rangle\geq -\theta(r)\quad (\mathrm{resp.}\, \leq)
\end{equation}
for some $\theta\in C^{0}\left(\left[0, +\infty\right)\right)$, and $\eta(s)=o(1)$ as $s\to0^{+}$. Let
\begin{equation*}
\mathrm{Hess}_f(r):=\mathrm{Hess}(r)-\frac{1}{m}\left\langle \nabla f, \nabla r\right\rangle\left\langle \cdot\,,\,\cdot\right\rangle
\end{equation*}
then
\begin{equation}\label{HessfComp}
\mathrm{Hess}_f(r)\leq\frac{h^{\prime}}{h}\left\{\left\langle \cdot\,,\,\cdot\right\rangle-dr\otimes dr\right\}+\frac{1}{m}\theta(r)\left\langle \cdot\,,\,\cdot\right\rangle\quad (\mathrm{resp.}\, \geq).
\end{equation}
\end{theorem}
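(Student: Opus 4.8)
The plan is to reduce the statement to the classical (non--weighted) Hessian comparison theorem for the distance function $r$, treating the weight as a lower--order additive correction. The starting point is the purely algebraic identity, immediate from the definition of $\mathrm{Hess}_f(r)$,
\[
\mathrm{Hess}_f(r)=\mathrm{Hess}(r)-\frac{1}{m}\eta(r)\left\langle \cdot\,,\,\cdot\right\rangle,
\]
with $\eta(r)=\left\langle \nabla r, \nabla f\right\rangle$ as in \eqref{Contrdrf}. Hence \eqref{HessfComp} will follow once I control $\mathrm{Hess}(r)$ by $\frac{h'}{h}\{\langle\cdot\,,\,\cdot\rangle-dr\otimes dr\}$ and bound the scalar correction $-\tfrac1m\eta(r)$ by $\tfrac1m\theta(r)$ via \eqref{Contrdrf}.

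For the first ingredient I would invoke the standard Hessian comparison theorem on $B_{r_0}(o)\setminus\mathrm{cut}(o)$, where $r$ is smooth; see e.g. \cite{PRS_Progress}. Under the radial lower bound ${}^{M}\mathrm{Sect}_{\mathrm{rad}}\geq -G(r)$ of \eqref{SectComp}, with $h$ solving the Cauchy problem \eqref{CauchyMod}, this gives
\[
\mathrm{Hess}(r)\leq \frac{h'}{h}\left\{\left\langle \cdot\,,\,\cdot\right\rangle-dr\otimes dr\right\}
\]
on the interval $I=[0,r_0)$ where $h>0$. The mechanism is the usual comparison of the matrix Riccati equation satisfied by $\mathrm{Hess}(r)$ along unit--speed radial geodesics with the scalar Riccati equation solved by $h'/h$; the curvature hypothesis forces the ordering of the two for all $t\in I$, with the common asymptotics $\tfrac1r$ as $r\to0^+$ supplying the initial matching. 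I will treat this as known and not reprove it.

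Combining the two ingredients finishes the argument. From \eqref{Contrdrf} one has $\eta(r)\geq-\theta(r)$, so $-\tfrac1m\eta(r)\leq\tfrac1m\theta(r)$, and therefore
\[
\mathrm{Hess}_f(r)=\mathrm{Hess}(r)-\frac{1}{m}\eta(r)\left\langle \cdot\,,\,\cdot\right\rangle\leq\frac{h'}{h}\left\{\left\langle \cdot\,,\,\cdot\right\rangle-dr\otimes dr\right\}+\frac{1}{m}\theta(r)\left\langle \cdot\,,\,\cdot\right\rangle,
\]
which is \eqref{HessfComp}. The reversed (resp.) case is entirely symmetric: the opposite bound ${}^{M}\mathrm{Sect}_{\mathrm{rad}}\leq -G(r)$ reverses the Riccati comparison to $\mathrm{Hess}(r)\geq\frac{h'}{h}\{\langle\cdot\,,\,\cdot\rangle-dr\otimes dr\}$, while $\eta(r)\leq-\theta(r)$ gives $-\tfrac1m\eta(r)\geq\tfrac1m\theta(r)$, and the two combine to the reversed inequality.

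I expect the only delicate point to be the behaviour at the base point $o$, where both $\mathrm{Hess}(r)$ and $h'/h$ carry a $\tfrac1r$ singularity. This is exactly where the hypothesis $\eta(s)=o(1)$ as $s\to0^+$ is needed: it ensures that the additive weight term $\tfrac1m\eta(r)\langle\cdot\,,\,\cdot\rangle$ stays bounded and is thus negligible against the singular part of $\mathrm{Hess}(r)$, so that the comparison extends continuously across $r=0$ and holds on all of $B_{r_0}(o)\setminus\mathrm{cut}(o)$. Away from $o$ the reduction is elementary and I anticipate no further obstruction.
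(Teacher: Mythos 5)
Your argument is correct, and it is a genuinely more economical route than the one in the paper. The key observation you exploit is that $\mathrm{Hess}_f(r)$ differs from $\mathrm{Hess}(r)$ by the scalar multiple $-\tfrac1m\eta(r)$ of the metric, so the weighted comparison is nothing more than the classical Hessian comparison theorem plus a pointwise bound on that scalar; since the correction is a multiple of $\left\langle\cdot\,,\,\cdot\right\rangle$ and both sides annihilate $\nabla r$ in the unweighted part, the full bilinear-form inequality follows at once. The paper instead re-derives the Riccati comparison from scratch: it shows that $\psi=(\lambda_{\min}+\eta/m)\circ\gamma$, where $\lambda_{\min}$ is the smallest eigenvalue of $\mathrm{Hess}_f(r)$ on $(\nabla r)^{\perp}$, satisfies $\psi'+\psi^2\geq G$ with $\psi(s)=\tfrac1s+o(1)$, and then invokes the Sturm-type comparison with $\phi=h'/h$. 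But $\lambda_{\min}(\mathrm{Hess}_f(r))+\eta/m$ is exactly $\lambda_{\min}(\mathrm{Hess}(r))$, so the paper's $\psi$ is the classical Riccati quantity and all the $\eta$--terms carried through \eqref{eqcomp2}--\eqref{eqcomp5} cancel in the end: the two proofs are mathematically the same, yours packaged as a citation of the classical theorem, the paper's as a self-contained computation. One small point: in your reduction the hypothesis $\eta(s)=o(1)$ as $s\to0^{+}$ is in fact not used at all (the classical comparison does not see $f$, and your additive step is pointwise away from $o$), so your closing paragraph overstates its role; it is needed only in the paper's formulation, to initialize the Riccati comparison for the combined quantity $\psi$ at $s=0$.
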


\begin{remark}
\rm{Note that tracing \eqref{HessfComp}, we recover corresponding estimates for $\Delta_f r$. These are consistent with comparison results for weighted manifolds with $\mathrm{Ric}_f(\nabla r, \nabla r)$ bounded from below by $-(m-1)G(r)$ and $f$ satisfying \eqref{Contrdrf} for some non--decreasing function $\theta\in C^{0}([0,+\infty))$, see Theorem 3.1 in \cite{PRRS}.
}
\end{remark}
\begin{proof}
Observe, first of all, that $\mathrm{Hess}(r)(\nabla r, X)=0$ for all $X\in T_xM$ and $x\in D_o\setminus \left\{o\right\}$. Next, since $\mathrm{Hess}_f(r)$ is symmetric, $T_x M$ has an orthonormal basis consisting of eigenvectors of $\mathrm{Hess}_f(r)$. Denoting by $\lambda_{\mathrm{max}}(x)$ and $\lambda_{\mathrm{min}}(x)$, respectively, the greatest and the smallest eigenvalues of $\mathrm{Hess}_f(r)$ in the orthogonal complement of $\nabla r(x)$, the theorem amounts to showing that on $D_o\setminus\left\{o\right\}\cap B_{r_o}(o)$ 
\begin{enumerate}
	\item[(i)] if \eqref{SectComp} and \eqref{Contrdrf} hold with $\geq$, then $\lambda_{\mathrm{max}}\leq\frac{h^{\prime}}{h}(r(x))+\frac{1}{m}\theta(r(x))$; 
	\item[(ii)] if \eqref{SectComp} and \eqref{Contrdrf} hold with $\leq$, then $\lambda_{\mathrm{min}}\geq\frac{h^{\prime}}{h}(r(x))+\frac{1}{m}\theta(r(x))$.
\end{enumerate}
Let us prove case (ii). The argument in case (i) is completely similar. Let $x\in D_o\setminus\left\{o\right\}$, and let $\gamma$ be the minimizing geodesic joining $o$ to $x$. We claim that $\psi=\left(\lambda_{\mathrm{min}}+\frac{\eta}{m}\right)\circ\gamma$ satisfies
\begin{equation}\label{RicIneq}
\begin{cases}
\psi^{\prime}+\psi^{2}\geq G\\
\psi(s)=\frac{1}{s}+o(1)\quad\mathrm{as}\quad s\to0^{+}
\end{cases}
\end{equation}
Since $\phi=\frac{h^{\prime}}{h}$ satisfies
\begin{equation}\label{Sturm}
\begin{cases}
\phi^{\prime}+\phi^2=G\quad\mathrm{on}\quad(0,r_0)\\
\phi(s)=\frac{1}{s}+o(1)\quad\mathrm{as}\quad s\to0^{+},
\end{cases}
\end{equation}
the required conclusion follows at once from Corollary 2.2 in \cite{PRS_Progress}. To prove the claim we proceed as follows. Let $\gamma$ be a minimizing geodesic joining $o$ to $\gamma(s_{0})=x\in D_{o}\setminus\left\{o\right\}$. For every unit vector $Y\in T_{x}M$ such that $Y\bot\dot{\gamma}(s_0)$, define a vector field $Y\bot\dot{\gamma}$, by parallel translation along $\gamma$. Since as noted above $\mathrm{hess}(r)(\nabla r)=\nabla_{\nabla r}\nabla r$=0, we compute, as in \cite{PRS_Progress}, 
\begin{equation}\label{eqcomp1}
\frac{d}{ds}(\mathrm{Hess}(r)(\gamma)(Y, Y))+\left\langle \mathrm{hess}(r)(\gamma)(Y), \mathrm{hess}(r)(\gamma)(Y)\right\rangle=-\mathrm{Sect}_{\gamma}(Y\wedge\dot{\gamma}).
\end{equation}
Moreover, we have that
\begin{eqnarray}
\frac{d}{ds}(\mathrm{Hess}_{f}(r)(\gamma)(Y,Y))&=&\frac{d}{ds}(\mathrm{Hess}(r)(\gamma)(Y, Y))-\frac{1}{m}\frac{d}{ds}\left\langle \nabla r\circ\gamma,\nabla f\circ\gamma\right\rangle\nonumber\\
&=&\frac{d}{ds}(\mathrm{Hess}(r)(\gamma)(Y, Y))-\frac{1}{m}\frac{d}{ds}\eta\circ\gamma\label{eqcomp2}
\end{eqnarray}
and letting
\begin{equation*}\label{eqcomp3}
\mathrm{hess}_{f}(r)(\gamma)(Y)=\mathrm{hess}(r)(\gamma)(Y)-\frac{1}{m}\left(\eta\circ\gamma\right) Y
\end{equation*}
we have that
\begin{eqnarray}
\left\langle \mathrm{hess}_f(r)(\gamma)(Y), \mathrm{hess}_{f}(r)(\gamma)(Y)\right\rangle&=&\left\langle \mathrm{hess}(r)(\gamma)(Y), \mathrm{hess}(r)(\gamma)(Y)\right\rangle\nonumber\\&&-\frac{2}{m}\mathrm{Hess}(r)(\gamma)(Y, Y)\left(\eta\circ\gamma\right)\label{eqcomp4}\\&&+\frac{1}{m^2}\left(\eta\circ\gamma\right)^2.\nonumber
\end{eqnarray}
Hence, by \eqref{eqcomp1}, \eqref{eqcomp2}, \eqref{eqcomp4}, and the lower bound in \eqref{SectComp}, we get that along $\gamma$
\begin{eqnarray}
\frac{d}{ds}(\mathrm{Hess}_f(r)(Y, Y))+ |\mathrm{hess}_f(r)(Y)|^2&\geq& G(r)-\frac{1}{m}\frac{d}{ds}\left(\eta\circ\gamma\right)\nonumber\\&&-\frac{2}{m}\mathrm{Hess}(r)(Y, Y)\left(\eta\circ\gamma\right)\label{eqcomp5}\\&&+\frac{1}{m^2}\left(\eta\circ\gamma\right)^2.\nonumber
\end{eqnarray}
Note that for any unit vector field $X\bot\nabla r$
\[
\ \mathrm{Hess}_{f}(r)(\gamma)(X, X)\geq \lambda_{\mathrm{min}.}
\]
Thus, if $Y$ is choosen so that, at $s_0$
\[
\ \mathrm{Hess}_{f}(r)(\gamma)(Y, Y)=\lambda_{\mathrm{min}}(\gamma(s_0)),
\]
then the function $\mathrm{Hess}_{f}(r)(\gamma)(Y, Y)-\lambda_{\mathrm{min}}\circ\gamma$ attains its minimum at $s=s_0$ and, if at this point $\lambda_{\mathrm{min}}$ is differentiable, then its derivative vanishes:
\begin{equation*}
\left.\frac{d}{ds}\right|_{s_0}\mathrm{Hess}_{f}(r)(\gamma)(Y, Y)-\left.\frac{d}{ds}\right|_{s_0}\lambda_{\mathrm{min}}\circ\gamma=0.
\end{equation*}
Whence, using \eqref{eqcomp5}, we obtain that, at $s_0$,
\begin{eqnarray*}
\frac{d}{ds}(\lambda_{\mathrm{min}}\circ\gamma)+(\lambda_{\mathrm{min}}\circ\gamma)^2&\geq& G(r)-\frac{1}{m}\frac{d}{ds}\left(\eta\circ\gamma\right)\\&&-\frac{2}{m}\mathrm{Hess}(r)(Y, Y)\left(\eta\circ\gamma\right)+\frac{1}{m^2}\left(\eta\circ\gamma\right)^2\\&=&G(r)-\frac{1}{m}\frac{d}{ds}\left(\eta\circ\gamma\right)\\&&-\frac{2}{m}\mathrm{Hess}_f(r)(Y, Y)\left(\eta\circ\gamma\right)-\frac{1}{m^2}\left(\eta\circ\gamma\right)^2\\
&=&G(r)-\frac{d}{ds}\frac{\eta\circ\gamma}{m}-2(\lambda_{\mathrm{min}}\circ\gamma)\frac{\eta\circ\gamma}{m}-\frac{\left(\eta\circ\gamma\right)^2}{m^2}.
\end{eqnarray*}
Letting now $\psi=\left(\lambda_{\mathrm{min}}+\frac{\eta}{m}\right)\circ\gamma$ we get the desired differential inequality \eqref{RicIneq}. The asymptotic behaviour 
\[
\ \psi(s)=\frac{1}{s}+o(1)\quad\mathrm{as}\quad s\to0^+
\]
follows from our assumptions on $\eta$ and the fact that
\[
\ \mathrm{Hess}(r)=\frac{1}{r}(\left\langle \cdot\,,\,\cdot\right\rangle-dr\otimes dr)+o(1)\quad\mathrm{as}\quad r\to0^{+}.
\]
\end{proof}

\section{A Sobolev inequality in the weighted setting}\label{SobWeight}
In this section we prove a general weighted $L^1$--Sobolev inequality for submanifolds $\Sigma^m$ of a weighted manifold $M_{f}^{m+1}$, satisfying some restrictions on $f$ and on the sectional curvature of $M$. The proof is inspired by the papers of J. H Michael, and L. M. Simon, \cite{MiSi}, and of D. Hoffman and J. Spruck, \cite{HoSp}. Recall that with $\overline{(\cdot)}$ we refer to quantities in the ambient space.
\begin{theorem}\label{ThWeightSobL1}
Let $\Sigma^m\to M_f^{m+1}$ be an isometric immersion. Assume that $\overline{\mathrm{Sect}}\leq 0$ and suppose that there exists a positive constant $c_{m}$ such that
\begin{equation}\label{smallweightgeom}
\limsup_{\rho\to0^{+}}\frac{\mathrm{vol}_{f}(S_{\rho}(\xi))}{\rho^{m}}\geq c_{m},
\end{equation} 
for almost all $\xi\in M_f$, where we are using the notation
\[
\ S_{\rho}(\xi)=\left\{x\in \Sigma \,|\, {}^{M}\mathrm{dist}(x, \xi)\leq \rho\right\}.
\]
Let $h$ be a non--negative compactly supported $C^{1}$ function on $\Sigma$. Then
\begin{equation}\label{WeightSobL1}
\left[\int_{\Sigma}h^{\frac{m}{m-1}}e^{-f}d\mathrm{vol}_{\Sigma}\right]^{\frac{m-1}{m}}\leq C\left[\int_{\Sigma}|\nabla h|+h\left(\left|H_{f}\right|+|\overline{\nabla }f|\right)e^{-f}d\mathrm{vol}_{\Sigma}\right].
\end{equation}
\end{theorem}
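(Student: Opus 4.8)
The plan is to adapt the monotonicity method of Michael--Simon \cite{MiSi} and Hoffman--Spruck \cite{HoSp} to the weighted measure $e^{-f}d\mathrm{vol}_{\Sigma}$, arranging the first--variation bookkeeping so that the density $e^{-f}$ produces exactly the terms $|H_{f}|$ and $|\overline{\nabla}f|$ appearing in \eqref{WeightSobL1}. First I would fix $\xi\in M$, set $r(x)={}^{M}\mathrm{dist}(x,\xi)$, and test the first variation of $\Sigma$ with the radial field $X=\gamma(r)\overline{\nabla}r$, where $\gamma\geq0$ is a smooth cutoff supported in $[0,\rho]$. Writing $\phi=h\,e^{-f}$ and combining $\int_{\Sigma}\mathrm{div}_{\Sigma}(\phi X)\,d\mathrm{vol}_{\Sigma}=-\int_{\Sigma}\phi\langle X,\mathbf H\rangle\,d\mathrm{vol}_{\Sigma}$ with $\nabla^{\Sigma}\phi=e^{-f}(\nabla^{\Sigma}h-h(\overline{\nabla}f)^{\top})$ and $\mathbf H_{f}=\mathbf H+(\overline{\nabla}f)^{\bot}$, the normal mean curvature and the normal part of $\overline{\nabla}f$ recombine into $\mathbf H_{f}$, yielding the identity
\begin{equation*}
\int_{\Sigma}h\,e^{-f}\,\mathrm{div}_{\Sigma}X\,d\mathrm{vol}_{\Sigma}=-\int_{\Sigma}e^{-f}\langle\nabla^{\Sigma}h,X\rangle-\int_{\Sigma}h\,e^{-f}\langle X,\mathbf H_{f}\rangle+\int_{\Sigma}h\,e^{-f}\langle X,\overline{\nabla}f\rangle .
\end{equation*}
Since $|\overline{\nabla}r|=1$ and $|X|=\gamma(r)\leq\|\gamma\|_{\infty}$, and $\mathbf H_{f}=H_{f}\nu$, the three terms on the right are controlled by $\int_{\Sigma}(|\nabla h|+h|H_{f}|+h|\overline{\nabla}f|)e^{-f}d\mathrm{vol}_{\Sigma}$, which is precisely the right--hand side of \eqref{WeightSobL1}.

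Next I would exploit $\overline{\mathrm{Sect}}\leq0$ to bound the left--hand side from below. By the Hessian comparison in the flat model ($G\equiv0$, so $h(t)=t$ and $h'/h=1/r$), which is the metric content of Theorem \ref{HessfCompTh}, one has $\mathrm{Hess}(r)\geq\frac{1}{r}(\langle\cdot\,,\,\cdot\rangle-dr\otimes dr)$, hence $\mathrm{div}_{\Sigma}\overline{\nabla}r=\sum_{i}\mathrm{Hess}(r)(e_{i},e_{i})\geq\frac{m-|\nabla^{\Sigma}r|^{2}}{r}\geq\frac{m-1}{r}$ on $T\Sigma$. Using $\mathrm{div}_{\Sigma}X=\gamma(r)\mathrm{div}_{\Sigma}\overline{\nabla}r+\gamma'(r)|\nabla^{\Sigma}r|^{2}$ with $\gamma'\leq0$ and $|\nabla^{\Sigma}r|\leq1$, then letting $\gamma\uparrow\mathbf 1_{[0,\rho]}$ and applying the co--area formula, the displayed identity becomes an almost--monotonicity statement for the weighted mass ratio
\[
\rho\longmapsto\rho^{-m}D(\rho),\qquad D(\rho):=\int_{S_{\rho}(\xi)}h\,e^{-f}\,d\mathrm{vol}_{\Sigma},
\]
with an error governed by $\int_{S_{\rho}(\xi)}(|\nabla h|+h|H_{f}|+h|\overline{\nabla}f|)e^{-f}d\mathrm{vol}_{\Sigma}$.

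Finally I would integrate this differential inequality in $\rho$. As $\rho\to+\infty$ the compact support of $h$ forces $\rho^{-m}D(\rho)\to0$, while as $\rho\to0^{+}$ the continuity of $h$ together with hypothesis \eqref{smallweightgeom} gives $\limsup_{\rho\to0^{+}}\rho^{-m}D(\rho)\geq c_{m}\,h(\xi)$ for almost every $\xi\in\Sigma$. This produces a mean--value bound of the shape $h(\xi)\leq C\int_{0}^{\infty}\sigma^{-m}\big(\int_{S_{\sigma}(\xi)}(|\nabla h|+h|H_{f}|+h|\overline{\nabla}f|)e^{-f}\,d\mathrm{vol}_{\Sigma}\big)\,d\sigma$. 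The passage from this pointwise/mean--value estimate to the global inequality \eqref{WeightSobL1}, with its purely dimensional constant $C$, is the Michael--Simon functional--covering argument: one applies the estimate to a suitable power of $h$, covers the superlevel sets $\{h>t\}$ by balls on which the weighted mass ratio is large, invokes a Vitali--type selection, and sums the local contributions. I expect this last step to be the main obstacle: making the covering and iteration run verbatim with the weighted measure and with constants uniform in $\xi$, and simultaneously handling the non--smoothness of $r$ across $\mathrm{cut}(\xi)$ (dealt with by the usual support--function/barrier device, the comparison holding in the barrier sense). By contrast, the first--variation bookkeeping that isolates $H_{f}$ and $\overline{\nabla}f$, and the comparison estimate feeding the monotonicity, are routine once the weight is inserted correctly.
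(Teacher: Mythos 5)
Your proposal follows essentially the same route as the paper's proof: the weighted first--variation identity with the radial field (recombining the normal parts of $\mathbf H$ and $\overline{\nabla}f$ into $\mathbf H_{f}$ and leaving an $r|\overline{\nabla}f|$ error), the Hessian comparison under $\overline{\mathrm{Sect}}\leq0$ giving almost--monotonicity of $\rho^{-m}\int_{S_{\rho}(\xi)}h\,e^{-f}d\mathrm{vol}_{\Sigma}$, and then the Michael--Simon good--radius selection, covering, and layer--cake integration over the superlevel sets. The only cosmetic differences are your cutoff $\gamma(r)\overline{\nabla}r$ in place of the paper's $r\lambda(\rho-r)\overline{\nabla}r$ and your compressed description of the final covering step (your intermediate ``mean--value bound'' $h(\xi)\leq C\int_{0}^{\infty}\sigma^{-m}(\cdots)\,d\sigma$ is not literally what the argument produces --- the paper instead selects, by contradiction, a single good radius $\rho<\rho_{0}$ --- but the covering and summation you describe are exactly the paper's sets $A_{i}$, $F_{i}$ construction).
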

\begin{remark}
\rm{Note that for every isometric immersion $\Sigma^m\to M^{m+1}$ we have that
\[
\ \limsup_{\rho\to0^{+}}\frac{\mathrm{vol}(S_{\rho}(\xi))}{\rho^m}\geq \omega_{m}
\]
for almost all $\xi\in M$, with $\omega_{m}$ the volume of the unit ball in $\mathbb{R}^{m}$. Hence condition \eqref{smallweightgeom} is satisfied if we assume that $f<f^{*}<+\infty$.}
\end{remark}

\begin{remark}
\rm{Theorem \ref{ThWeightSobL1} has a companion weighted isoperimetric inequality. In this regard, we mention that the isoperimetric problem in Riemannian manifolds with density (and in particular in the Gaussian space) is a recent and very active field of research; see e.g. \cite{Mo-notices}, \cite{RCBM}, \cite{MoPr}, \cite{Mil}.}
\end{remark}
Let $\Sigma^m\to M_f^{m+1}$ be an isometric immersion as in Theorem \ref{ThWeightSobL1}, let $X=r\overline{\nabla}r$, $r$ distance function on $M$, and $\left\{E_1, \ldots, E_{m}\right\}$ be a local orthonormal frame on $\Sigma$. Since
\[
\ ^{\Sigma}\mathrm{div} X=\sum_{i=1}^{m}\left\langle \overline{\nabla}_{E_i}(r\overline{\nabla}r), E_{i}\right\rangle=r\sum_{i=1}^{m}\overline{\mathrm{Hess}}(r)(E_{i}, E_{i})+\sum_{i=1}^{m}\left\langle \overline{\nabla}r, E_{i}\right\rangle^2
\]
we obtain, by the classical Hessian comparison theorem, that
\begin{eqnarray}
{}^{\Sigma}\mathrm{div}X-\left\langle \overline{\nabla}f, X\right\rangle&=&r\sum_{i=1}^{m}\overline{\mathrm{Hess}}(r)(E_{i}, E_{i})+\sum_{i=1}^{m}\left\langle \overline{\nabla}r, E_{i}\right\rangle^2\label{EqSob1}\\
&&-r\left\langle \overline{\nabla}f,\overline{\nabla}r\right\rangle\nonumber\\
&\geq&m-\sum_{i=1}^{m}\left\langle \overline{\nabla}r, E_{i}\right\rangle^2+\sum_{i=1}^{m}\left\langle \overline{\nabla}r, E_{i}\right\rangle^2-r\left\langle \overline{\nabla}f,\overline{\nabla}r\right\rangle\nonumber\\
&\geq&m-r|\overline{\nabla}f|.\nonumber
\end{eqnarray}
Let $\lambda$ be a non--decreasing $C^{1}$ function on $\mathbb{R}$ with $\lambda(t)=0$ for $t\leq0$. Let $0\leq h\in C_c^{1}(\Sigma)$. For $\xi\in\Sigma$, let $r(x)$ be the distance function from the point $\xi$ on $M$. Then we define the following quantities
\begin{eqnarray*}
\phi_{\xi}(\rho)&=&\int_{\Sigma}\lambda(\rho-r(x))h(x)e^{-f}d\mathrm{vol}_{\Sigma};\\
\psi_{\xi}(\rho)&=&\int_{\Sigma}\lambda(\rho-r(x))\left(|\nabla h(x)|+h(x)|H_f(x)|\right)e^{-f}d\mathrm{vol}_{\Sigma};\\
\mu_{\xi}(\rho)&=&\int_{\Sigma}\lambda(\rho-r(x))\left(|\overline{\nabla}f|(x)\,h(x)\right)e^{-f}d\mathrm{vol}_{\Sigma};\\
\overline{\phi}_{\xi}(\rho)&=&\int_{S_{\rho}(\xi)}h(x)e^{-f}d\mathrm{vol}_{\Sigma};\\
\overline{\psi}_{\xi}(\rho)&=&\int_{S_{\rho}(\xi)}\left(|\nabla h(x)|+h(x)|H_f(x)|\right)e^{-f}d\mathrm{vol}_{\Sigma};\\
\overline{\mu}_{\xi}(\rho)&=&\int_{S_{\rho}(\xi)}\left(|\overline{\nabla}f|(x)h(x)\right)e^{-f}d\mathrm{vol}_{\Sigma};
\end{eqnarray*}
We now prove two lemmas which generalize Lemmas 4.1 and 4.2 in \cite{HoSp}. The first one relates the growth of $\phi_{\xi}(\rho)$ to $\psi_{\xi}(\rho)$ and $\mu_{\xi}(\rho)$.
\begin{lemma}
Let $\Sigma^m\to M_{f}^{m+1}$ be an isometric immersion. Assume that  $\overline{\mathrm{Sect}}\leq0$. Then
\begin{equation}\label{EqLemma1}
-\frac{d}{d\rho}\left(\rho^{-m}\phi_{\xi}(\rho)\right)\leq\rho^{-m}\left[\psi_{\xi}(\rho)+\mu_{\xi}(\rho)\right].
\end{equation}
\end{lemma}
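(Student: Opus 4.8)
The plan is to reduce the claimed inequality \eqref{EqLemma1} to a differential inequality in $\rho$ and then to establish the latter by a weighted divergence argument adapted from \cite{MiSi} and \cite{HoSp}. First I would differentiate: since $\frac{d}{d\rho}\lambda(\rho-r)=\lambda'(\rho-r)$, one has $\phi_\xi'(\rho)=\int_\Sigma\lambda'(\rho-r)h\,e^{-f}d\mathrm{vol}_\Sigma$, and hence
\[
-\frac{d}{d\rho}\left(\rho^{-m}\phi_\xi(\rho)\right)=\rho^{-m-1}\bigl(m\phi_\xi(\rho)-\rho\phi_\xi'(\rho)\bigr).
\]
Thus \eqref{EqLemma1} is equivalent to $m\phi_\xi(\rho)-\rho\phi_\xi'(\rho)\le\rho\bigl(\psi_\xi(\rho)+\mu_\xi(\rho)\bigr)$, which is what I would prove.

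Second, I would exploit the field $X=r\overline{\nabla}r$ together with the test function $g=\lambda(\rho-r)h\ge0$, which is compactly supported on $\Sigma$. The key algebraic observation is that, decomposing $X=X^{T}+X^{\perp}$ along $\Sigma$ and using the Gauss formula, the combination appearing on the left of \eqref{EqSob1} satisfies
\[
{}^{\Sigma}\mathrm{div}X-\langle\overline{\nabla}f,X\rangle={}^{\Sigma}\mathrm{div}X^{T}-\langle\nabla f,X^{T}\rangle-\langle X^{\perp},\mathbf{H}_f\rangle,
\]
where $\nabla f=(\overline{\nabla}f)^{T}$; that is, it equals the weighted tangential divergence of $X^{T}$ minus the $f$--mean curvature term $\langle X^{\perp},\mathbf{H}_f\rangle$. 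Multiplying by $g\,e^{-f}$, integrating over $\Sigma$, and integrating by parts with respect to the weighted measure (the boundary term vanishes by compact support) then gives
\[
\int_\Sigma g\bigl({}^{\Sigma}\mathrm{div}X-\langle\overline{\nabla}f,X\rangle\bigr)e^{-f}d\mathrm{vol}_\Sigma=-\int_\Sigma\langle\nabla g,X\rangle e^{-f}d\mathrm{vol}_\Sigma-\int_\Sigma g\langle X^{\perp},\mathbf{H}_f\rangle e^{-f}d\mathrm{vol}_\Sigma.
\]

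Third, I would bound both sides. On the left, \eqref{EqSob1} yields $g\bigl({}^{\Sigma}\mathrm{div}X-\langle\overline{\nabla}f,X\rangle\bigr)\ge g\,(m-r|\overline{\nabla}f|)$; since $\lambda(\rho-r)=0$ for $r\ge\rho$, the integrand is supported where $r<\rho$, so the left side is $\ge m\phi_\xi(\rho)-\rho\mu_\xi(\rho)$. On the right, using $\nabla g=\lambda(\rho-r)\nabla h-h\lambda'(\rho-r)(\overline{\nabla}r)^{T}$, together with $|X|=r$, $|(\overline{\nabla}r)^{T}|\le1$, $\lambda'\ge0$, $h\ge0$ and again $r<\rho$ on the support, I would estimate the gradient term by $\rho\int_\Sigma\lambda|\nabla h|e^{-f}d\mathrm{vol}_\Sigma+\rho\phi_\xi'(\rho)$ and the curvature term, via $|\langle X^{\perp},\mathbf{H}_f\rangle|\le r|H_f|$, by $\rho\int_\Sigma\lambda h|H_f|e^{-f}d\mathrm{vol}_\Sigma$. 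Summing and recognizing $\psi_\xi(\rho)=\int_\Sigma\lambda(|\nabla h|+h|H_f|)e^{-f}d\mathrm{vol}_\Sigma$, the right side is $\le\rho\phi_\xi'(\rho)+\rho\psi_\xi(\rho)$. Combining with the lower bound gives $m\phi_\xi-\rho\mu_\xi\le\rho\phi_\xi'+\rho\psi_\xi$, which rearranges to the required differential inequality.

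The main obstacle I anticipate is the second step: correctly recognizing that the specific combination ${}^{\Sigma}\mathrm{div}X-\langle\overline{\nabla}f,X\rangle$ --- rather than either term alone --- is the natural object that reassembles into a weighted divergence plus an honest $\mathbf{H}_f$ term, so that weighted integration by parts leaves no extra terms and the $f$--mean curvature (not the ordinary mean curvature) appears. Once this identity is in place the remaining estimates are routine, the only care being to track the factor $r\le\rho$ on the support and the sign of $\lambda'$, so that the $\lambda'$--contribution is absorbed exactly into $\rho\phi_\xi'(\rho)$.
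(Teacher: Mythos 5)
Your proposal is correct and follows essentially the same route as the paper: the identity ${}^{\Sigma}\mathrm{div}X-\langle\overline{\nabla}f,X\rangle={}^{\Sigma}\mathrm{div}_f(X^{T})-\langle X^{\perp},\mathbf{H}_f\rangle$ is exactly the computation in the paper's proof, and the subsequent weighted integration by parts, the bounds $|X^{T}|\leq r\leq\rho$ on the support of $\lambda(\rho-r)$, and the use of \eqref{EqSob1} to get $m\phi_{\xi}-\rho\mu_{\xi}\leq\rho\phi_{\xi}'+\rho\psi_{\xi}$ all coincide with the paper's argument. The only (immaterial) difference is that you state the equivalence with the differentiated form of \eqref{EqLemma1} up front rather than at the end.
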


\begin{proof}
Let $X$ be the radial vector field centered at $\xi$ and let $(\cdot)^{T}$ denote the projection on the tangent bundle of $\Sigma$. Since
\begin{eqnarray*} ^{\Sigma}\mathrm{div}_{f}(\lambda(\rho-r)hX^{T})&=&\lambda(\rho-r)h\,^{\Sigma}\mathrm{div}_{f}(X^{T})-\lambda^{\prime}(\rho-r)h\left\langle X^T, \nabla r\right\rangle\\&&+\lambda(\rho-r)\left\langle \nabla h, X^{T}\right\rangle,
\end{eqnarray*}
and
\begin{eqnarray*}
^{\Sigma}\mathrm{div}_{f}(X^T)&=&\sum_{i=1}^{m}\left\langle \overline{\nabla}_{E_{i}}X^{T}, E_{i}\right\rangle-\left\langle \nabla f, X^{T}\right\rangle\\
&=&\sum_{i=1}^{m}\left\langle \overline{\nabla}_{E_i}X, E_{i}\right\rangle-\left\langle X, \nu\right\rangle\sum_{i=1}^{m}\left\langle \overline{\nabla}_{E_i}\nu, E_{i}\right\rangle\\
&&-\left\langle \overline{\nabla}f, X\right\rangle+\left\langle \overline{\nabla}f, \nu\right\rangle\left\langle X, \nu\right\rangle\\
&=&^{\Sigma}\mathrm{div}X-\left\langle \overline{\nabla}f, X\right\rangle+\left\langle X, \nu\right\rangle\left(H+\left\langle \overline{\nabla}f, \nu\right\rangle\right)\\
&=&^{\Sigma}\mathrm{div}X-\left\langle \overline{\nabla}f, X\right\rangle+\left\langle X, \nu\right\rangle H_{f},
\end{eqnarray*}
we obtain that
\begin{eqnarray}
^{\Sigma}\mathrm{div}_{f}\left(\lambda(\rho-r)hX^{T}\right)&=&\lambda(\rho-r)h\left(^{\Sigma}\mathrm{div}X-\left\langle \overline{\nabla}f, X\right\rangle\right)\nonumber\\
&&+\lambda(\rho-r)h\left\langle X, \nu\right\rangle H_{f}-\lambda^{\prime}(\rho-r)h\left\langle X^{T},\nabla r\right\rangle\label{Eq1ProofLem1}\\
&&+\lambda(\rho-r)\left\langle \nabla h, X^{T}\right\rangle.\nonumber
\end{eqnarray}
Since $|\nabla r|=\left|(\overline{\nabla} r)^{T}\right|\leq|\overline{\nabla}r|=1$ and $\lambda(\rho-r)=\lambda^{\prime}(\rho-r)=0$ for $r\geq\rho$, integrating \eqref{Eq1ProofLem1} over $\Sigma$ with respect to the weighted volume measure and using the $f$--divergence theorem, we get that
\begin{eqnarray*}
\int_{\Sigma}\lambda(\rho-r)h\left(^{\Sigma}\mathrm{div}X-\left\langle X, \overline{\nabla}f\right\rangle\right)e^{-f}d\mathrm{vol}_{\Sigma}&=&\int_{\Sigma}\lambda^{\prime}(\rho-r)h\left\langle X^T, \nabla r\right\rangle e^{-f}d\mathrm{vol}_{\Sigma}\\
&&-\int_{\Sigma}\lambda(\rho-r)h\,H_{f}\,r\left\langle \overline{\nabla}r, \nu\right\rangle e^{-f}d\mathrm{vol}_{\Sigma}\\
&&-\int_{\Sigma}\lambda(\rho-r)r\left\langle \nabla r, \nabla h\right\rangle e^{-f}d\mathrm{vol}_{\Sigma}\\
&\leq&\int_{\Sigma}r\lambda^{\prime}(\rho-r)|h|e^{-f}d\mathrm{vol}_{\Sigma}\\
&&+\int_{\Sigma}r\,\lambda(\rho-r)|h|\,|H_{f}|e^{-f}d\mathrm{vol}_{\Sigma}\\
&&+\int_{\Sigma}r\lambda(\rho-r)|\nabla h|e^{-f}d\mathrm{vol}_{\Sigma}\\
&\leq&\rho\phi_{\xi}^{\prime}(\rho)+\rho\psi_{\xi}(\rho).
\end{eqnarray*}
Hence, by \eqref{EqSob1} we have that
\begin{eqnarray*}
\rho\phi_{\xi}^{\prime}(\rho)+\rho\psi_{\xi}(\rho)&\geq&\int_{\Sigma}(m-r|\overline{\nabla}f|)\lambda(\rho-r)he^{-f}d\mathrm{vol}_{\Sigma}\\
&=&m\phi_{\xi}(\rho)-\int_{\Sigma}\lambda(\rho-r)r|\overline{\nabla}f|he^{-f}d\mathrm{vol}_{\Sigma}\\
&\geq&m\phi_{\xi}(\rho)-\rho\int_{\Sigma}\lambda(\rho-r)|\overline{\nabla}f|he^{-f}d\mathrm{vol}_{\Sigma},
\end{eqnarray*}
that is,
\begin{equation}\label{ODELem1}
m\phi_{\xi}(\rho)-\rho\mu_{\xi}(\rho)\leq\rho\phi_{\xi}^{\prime}(\rho)+\rho\psi_{\xi}(\rho),
\end{equation}
proving \eqref{EqLemma1}.
\end{proof}

\begin{lemma}\label{Lemma2}
Let $\xi\in \Sigma$ be such that $h(\xi)\geq1$. Let $\alpha, t$ satisfy $0<\alpha<1\leq t$, and suppose that there exists a constant $c_m$ such that \eqref{smallweightgeom} holds. Set
\[
\ \rho_{0}=\frac{1}{1-\alpha}\left[c_m^{-1}\int_{\Sigma}he^{-f}d\mathrm{vol}_{\Sigma}\right]^{\frac{1}{m}}.
\]
Then there exists $\rho$, $0<\rho<\rho_0$, such that
\[
\ \overline{\phi}_{\xi}(t\rho)\leq\alpha^{-1}t^{m-1}\rho_{0}\left[\overline{\psi}_{\xi}(\rho)+\overline{\mu}_{\xi}(\rho)\right].
\]
\end{lemma}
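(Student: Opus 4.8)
The plan is to argue by contradiction, converting the differential inequality \eqref{EqLemma1} of the previous lemma into a lower bound for the normalized weighted mass $\rho^{-m}\overline{\phi}_{\xi}(\rho)$ at the scale $\rho_0$, and then to play this lower bound against the upper bound that is forced by the very definition of $\rho_0$. Suppose the conclusion fails, so that for \emph{every} $\rho\in(0,\rho_0)$ one has the strict reverse inequality
\[
\overline{\psi}_{\xi}(\rho)+\overline{\mu}_{\xi}(\rho)<\frac{\alpha}{t^{m-1}\rho_0}\,\overline{\phi}_{\xi}(\rho).
\]

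First I would record the small--scale behaviour. Since $h$ is continuous with $h(\xi)\geq1$ and the sets $S_{\rho}(\xi)$ shrink to $\{\xi\}$ as $\rho\to0^{+}$, for every $\varepsilon>0$ we have $h\geq1-\varepsilon$ on $S_{\rho}(\xi)$ once $\rho$ is small, whence $\overline{\phi}_{\xi}(\rho)\geq(1-\varepsilon)\,\mathrm{vol}_{f}(S_{\rho}(\xi))$. Combined with the density hypothesis \eqref{smallweightgeom} this yields
\[
\limsup_{\rho\to0^{+}}\rho^{-m}\overline{\phi}_{\xi}(\rho)\geq c_{m}.
\]
Next I would upgrade \eqref{EqLemma1}, which is proved for the mollified quantities $\phi_{\xi},\psi_{\xi},\mu_{\xi}$, to the sharp functions. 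Choosing a sequence of admissible profiles $\lambda_{j}\nearrow\mathbf{1}_{(0,\infty)}$ with $\lambda_{j}\leq1$ and using monotone convergence, $\phi_{\xi}\to\overline{\phi}_{\xi}$, $\psi_{\xi}\to\overline{\psi}_{\xi}$ and $\mu_{\xi}\to\overline{\mu}_{\xi}$ pointwise in $\rho$ off the (at most countable) jump set of $\overline{\phi}_{\xi}$, so the distributional inequality passes to the limit: the bounded monotone function $g(\rho):=\rho^{-m}\overline{\phi}_{\xi}(\rho)$ satisfies $-\,dg\leq\rho^{-m}(\overline{\psi}_{\xi}+\overline{\mu}_{\xi})\,d\rho$ as measures on $(0,\rho_0)$.

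Feeding in the contradiction hypothesis gives $-\,dg\leq\frac{\alpha}{t^{m-1}\rho_0}\,g\,d\rho$. Since the jumps of $g$ are all upward (they come only from the non--decreasing $\overline{\phi}_{\xi}$, while $\rho^{-m}$ is continuous), a Gronwall/logarithmic integration from a small $\sigma$ up to $\rho_0$ gives $g(\rho_0)\geq g(\sigma)\,e^{-\alpha/t^{m-1}}$; letting $\sigma\to0^{+}$ along the $\limsup$ sequence yields $g(\rho_0)\geq c_{m}\,e^{-\alpha/t^{m-1}}$. On the other hand, by the definition of $\rho_0$ we have $\rho_0^{m}=(1-\alpha)^{-m}c_{m}^{-1}\int_{\Sigma}h\,e^{-f}d\mathrm{vol}_{\Sigma}$, so
\[
\overline{\phi}_{\xi}(\rho_0)\leq\int_{\Sigma}h\,e^{-f}d\mathrm{vol}_{\Sigma}=(1-\alpha)^{m}c_{m}\rho_0^{m},\qquad\text{i.e.}\qquad g(\rho_0)\leq(1-\alpha)^{m}c_{m}.
\]
These two estimates for $g(\rho_0)$ are incompatible, because $(1-\alpha)^{m}<e^{-\alpha/t^{m-1}}$: indeed $\log\frac{1}{1-\alpha}>\alpha\geq\frac{\alpha}{t^{m-1}}$ for $0<\alpha<1$, $m\geq1$ and $t\geq1$, so $m\log\frac{1}{1-\alpha}>\frac{\alpha}{t^{m-1}}$. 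This contradiction proves the existence of the desired radius $\rho\in(0,\rho_0)$.

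The main obstacle is the middle step: passing rigorously from the mollified inequality \eqref{EqLemma1} to a measure inequality for the genuinely discontinuous monotone function $g$, and then integrating through its upward jumps. This is exactly the point where the mollifier $\lambda$ and the comparison of nearby scales do their work, and it is also where the harmless slack factor $t^{m-1}$ in the statement leaves the numerology with room to spare.
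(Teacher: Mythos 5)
Your argument is correct for the lemma as literally stated, but it follows a genuinely different route from the paper's. Both proofs argue by contradiction and play the same two endpoint facts against each other --- the small--scale density bound \eqref{smallweightgeom}, which forces $\limsup_{\sigma\to0^{+}}\sigma^{-m}\overline{\phi}_{\xi}(\sigma)\geq c_{m}$ since $h$ is continuous with $h(\xi)\geq1$, and the definition of $\rho_{0}$, which caps $\rho_{0}^{-m}\overline{\phi}_{\xi}(\rho_{0})$ by $(1-\alpha)^{m}c_{m}$ --- but the mechanism connecting the two scales differs. The paper integrates \eqref{EqLemma1} over $(\sigma,\rho_{0})$ (using a $\sigma-\varepsilon$ trick with the profile $\lambda$ to replace $\phi_{\xi}$ by $\overline{\phi}_{\xi}$, which plays the role of your monotone approximation $\lambda_{j}\nearrow\mathbf{1}_{(0,\infty)}$), then integrates the contradiction hypothesis against $\rho^{-m}d\rho$, changes variables $s=t\rho$, splits the resulting integral at $\rho_{0}$, and absorbs one piece into $\alpha\sup_{\sigma}\sigma^{-m}\overline{\phi}_{\xi}(\sigma)$ while estimating the tail by $\alpha\rho_{0}^{-m}(m-1)^{-1}\int_{\Sigma}he^{-f}d\mathrm{vol}_{\Sigma}$; you instead run a multiplicative Gronwall estimate on $g(\rho)=\rho^{-m}\overline{\phi}_{\xi}(\rho)$, which avoids the tail term and makes the final numerical contradiction, $(1-\alpha)^{m}<e^{-\alpha/t^{m-1}}$, fully explicit where the paper leaves it implicit; your handling of the upward jumps of $g$ is also sound. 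One caveat: the paper's contradiction hypothesis reads $\overline{\psi}_{\xi}(\rho)+\overline{\mu}_{\xi}(\rho)<\alpha t^{-(m-1)}\rho_{0}^{-1}\overline{\phi}_{\xi}(t\rho)$, so its proof actually establishes the \emph{stronger} conclusion with $\overline{\phi}_{\xi}(t\rho)$ on the left--hand side, and it is this version --- with the factor $t^{m-1}$ exactly consumed by the substitution $s=t\rho$ --- that is used to define the sets $A_{i}$ in the covering argument for Theorem \ref{ThWeightSobL1}. Your Gronwall argument, in which $t^{m-1}$ is indeed pure slack, proves the statement as printed but does not directly yield the dilated version, since the hypothesis would then couple $-dg$ at $\rho$ with $g$ at the point $t\rho$; if you intend to feed the lemma into the proof of the Sobolev inequality you would need either to adapt your estimate or to revert to the paper's change--of--variables device.
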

\begin{proof}
Integrating \eqref{EqLemma1} on $(\sigma, \rho_{0})$, $\sigma\in(0, \rho_{0})$, we have that
\[
\ \sigma^{-m}\phi_{\xi}(\sigma)\leq\rho_{0}^{-m}\phi_{\xi}(\rho_{0})+\int_{0}^{\rho_{0}}\rho^{-m}\psi_{\xi}(\rho)d\rho+\int_{0}^{\rho_{0}}\rho^{-m}\mu_{\xi}(\rho)d\rho.
\]
Take $\varepsilon\in(0,\sigma)$ and choose $\lambda$ such that $\lambda(t)=1$ for $t\geq\epsilon$. Then
\[
\ \sigma^{-m}\overline{\phi}_{\xi}(\sigma-\epsilon)\leq\rho_{0}^{-m}\overline{\phi}_{\xi}(\rho_{0})+\int_{0}^{\rho_{0}}\rho^{-m}\overline{\psi}_{\xi}(\rho)d\rho+\int_{0}^{\rho_{0}}\rho^{-m}\overline{\mu}_{\xi}(\rho)d\rho.
\]
Hence, since $\sigma, \varepsilon$ are arbitrary,
\begin{equation*}\label{Eq1Lem2}
\sup_{\sigma\in(0,\rho_{0})}\sigma^{-m}\overline{\phi}_{\xi}(\sigma)\leq\rho_{0}^{-m}\overline{\phi}_{\xi}(\rho_{0})+\int_{0}^{\rho_{0}}\rho^{-m}\overline{\psi}_{\xi}(\rho)d\rho+\int_{0}^{\rho_{0}}\rho^{-m}\overline{\mu}_{\xi}(\rho)d\rho.
\end{equation*}
By contradiction, assume that for all $\rho\in(0, \rho_{0})$,
\[
\ \overline{\psi}_{\xi}(\rho)+\overline{\mu}_{\xi}(\rho)<\alpha t^{-(m-1)}\rho_{0}^{-1}\overline{\phi}_{\xi}(t\rho).
\]
Then
\begin{eqnarray*}
\int_{0}^{\rho_{0}}\rho^{-m}\overline{\psi}_{\xi}(\rho)d\rho&+&\int_{0}^{\rho_{0}}\rho^{-m}\overline{\mu}_{\xi}(\rho)d\rho\\
&<&\alpha\rho_{0}^{-1}\int_{0}^{\rho_{0}}t^{-(m-1)}\overline{\phi}_{\xi}(t\rho)\rho^{-m}d\rho\\
&=&\alpha\rho_{0}^{-1}\int_{0}^{t\rho_{0}}s^{-m}\overline{\phi}_{\xi}(s)ds\\
&\leq&\alpha\rho_{0}^{-1}\left[\int_{0}^{\rho_{0}}s^{-m}\overline{\phi}_{\xi}(s)ds+\int_{\rho_{0}}^{+\infty}s^{-m}\overline{\phi}_{\xi}(s)ds\right]\\
&\leq&\alpha\sup_{\sigma\in(0, \rho_{0})}\sigma^{-m}\overline{\phi}_{\xi}(\sigma)+\alpha\rho_{0}^{-m}(m-1)^{-1}\int_{\Sigma}he^{-f}d\mathrm{vol}_{\Sigma}.
\end{eqnarray*}
Thus we get that
\[
\ (1-\alpha)\sup_{\sigma\in(0,\rho_{0})}\sigma^{-m}\overline{\phi}_{\xi}(\sigma)<\rho_{0}^{-m}\int_{\Sigma}he^{-f}d\mathrm{vol}_{\Sigma}\left[1+\alpha(m-1)^{-1}\right]. 
\]
Using \eqref{smallweightgeom}, this gives a contradiction.
\end{proof}

\begin{proof}[Proof of Theorem \ref{ThWeightSobL1}]
We follow the argument in \cite{MiSi}, \cite{HoSp}.\\ Let $A=\left\{\xi\in \Sigma\,|\,h(\xi)\geq1\right\}$. Set $\rho_{i}=\beta^{i}\rho_{0}$, where $\frac{2}{t}<\beta<1$, $t>2$. Define
\[
\ A_{i}=\left\{\xi\in A \,|\,\overline{\phi}_{\xi}(t\rho)\leq\alpha^{-1}t^{m-1}\rho_{0}\left[\overline{\psi}_{\xi}(\rho)+\overline{\mu}_{\xi}(\rho)\right]\, \mathrm{for\, some}\, \rho\in\left(\rho_{i+1}, \rho_{i}\right)\right\}.
\]
It follows from Lemma \ref{Lemma2} that $A=\bigcup_{i=0}^{\infty}A_{i}$. Next, define inductively a sequence $F_{0}, F_{1}, \ldots$ of subsets of $A$ as follows:
\begin{enumerate}
	\item $F_{0}=\emptyset$;
	\item Let $k\geq1$ and assume $F_{0}, F_{1},\ldots, F_{k-1}$ have been defined. Let $B_{k}=A_{k}\setminus\bigcup_{i=0}^{k-1}\bigcup_{\xi\in F_{i}}S_{\beta t\rho_{i}}(\xi)$.
\end{enumerate}
If $B_{k}=\emptyset$, then put $F_{k}=\emptyset$. If $B_{k}\neq\emptyset$, define $F_{k}$ to be a finite subset of $B_{k}$ such that $B_{k}\subset\bigcup_{\xi\in F_{k}}S_{\beta t \rho_{k}}(\xi)$ and the sets $S_{\rho_{k}}(\xi)$ are pairwise disjoint. Then one checks that the following properties hold:
\begin{enumerate}
	\item[(a)]$F_{i}\subset A_{i}$;
	\item[(b)]$A\subset\bigcup_{i=1}^{\infty}\bigcup_{\xi\in F_{i}}S_{\beta t\rho_{i}}(\xi)$;
	\item[(c)] For all $i$, $\left\{S_{\rho_{i}}(\xi)\right\}_{\xi\in F_{i}}$ is a collection of pairwise disjoint sets.
\end{enumerate}
Let $\xi\in F_{i}$. Then, by property (a) we have that, for some $\rho\in(\beta\rho_{i}, \rho_{i})$,
\[
\ \overline{\phi}_{\xi}(t\rho)\leq \alpha^{-1}t^{m-1}\rho_{0}\left[\overline{\psi}_{\xi}(\rho)+\overline{\mu}_{\xi}(\rho)\right].
\]
Thus, since $\theta\leq0$, $\overline{\mu}_{\xi}(\rho)$ is non--decreasing and hence
\begin{eqnarray*}
\overline{\phi}_{\xi}(\beta t\rho_{i})&\leq&\overline{\phi}_{\xi}(t\rho)\leq\alpha^{-1}t^{m-1}\rho_{0}\left[\overline{\psi}_{\xi}(\rho)+\overline{\mu}_{\xi}(\rho)\right]\\
&\leq&\alpha^{-1}t^{m-1}\rho_{0}\left[\overline{\psi}_{\xi}(\rho_{i})+\overline{\mu}_{\xi}(\rho_{i})\right]. 
\end{eqnarray*}
Summing over all $\xi\in F_{i}$ and $i$ and using properties (b) and (c) defining $\Sigma_s=\left\{\xi\in\Sigma\,|\,h(\xi)\geq s\right\}$, we get that
\begin{eqnarray}
\mathrm{vol}_{f}(\Sigma_{1})&=&\sum_{i=1}^{\infty}\sum_{\xi\in F_{i}}\mathrm{vol}_{f}\left(S_{\beta t\rho_{i}}(\xi)\cap\Sigma\right)\leq \sum_{i=1}^{\infty}\sum_{\xi\in F_{i}}\overline{\phi}_{\xi}(\beta t\rho_{i})\nonumber\\
&\leq&\sum_{i=1}^{\infty}\sum_{\xi\in F_{i}}\left[\alpha^{-1}t^{m-1}\rho_{0}\left(\overline{\psi}_{\xi}(\rho_{i})+\overline{\mu}_{\xi}(\rho_{i}))\right)\right]\nonumber\\
&\leq&\alpha^{-1}t^{m-1}\rho_{0}\left[\int_{\Sigma}\left(|\nabla h|+h|H_{f}|\right)e^{-f}d\mathrm{vol}_{\Sigma}+\int_{\Sigma}|\overline{\nabla} f|he^{-f}d\mathrm{vol}_{\Sigma}\right]\nonumber.
\end{eqnarray}
Now let $s,\varepsilon>0$ be arbitrary and let $\lambda\in C^{1}(\mathbb{R})$ be non--decreasing and such that $\lambda(t)=0$ for $t\leq -\varepsilon$ and $\lambda(t)=1$ for $t\geq0$. Since we have also that
\[
\ \Sigma_{s}=\left\{\xi\in\Sigma\,|\, \lambda(h(x)-s)\geq1\right\},
\]
replacing $h$ by $\lambda(h-s)$ in the last computation, one obtains
\begin{eqnarray}
\mathrm{vol}_{f}(\Sigma_{s})&\leq&\frac{\alpha^{-1}}{1-\alpha}t^{m-1}\left[c_{m}^{-1}\int_{\Sigma}\lambda(h-s)e^{-f}d\mathrm{vol}_{\Sigma}\right]^{\frac{1}{m}}\label{Eq2ProofTh}\\
&&\times\left[\int_{\Sigma}\lambda^{\prime}(h-s)|\nabla h|+\lambda(h-s)\left[|H_{f}|+|\overline{\nabla}f|\right]e^{-f}d\mathrm{vol}_{\Sigma}\right].\nonumber
\end{eqnarray}
Multiplying both sides of \eqref{Eq2ProofTh} by $s^{\frac{1}{m-1}}$, using the fact that $\lambda(h-s)=0$ for $s\geq h+\varepsilon$, and letting $c=\alpha^{-1}(1-\alpha)^{-1}t^{m-1}c_{m}^{-\frac{1}{m}}$, we obtain
\begin{eqnarray*}
s^{\frac{1}{m-1}}\mathrm{vol}_{f}(\Sigma_s)&\leq& c\left[\int_{\Sigma}\left(h+\varepsilon\right)^{\frac{m}{m-1}}e^{-f}d\mathrm{vol}_{\Sigma}\right]^{\frac{1}{m}}\\
&&\times\left[\int_{\Sigma}\lambda^{\prime}(h-s)|\nabla h|+\lambda(h-s)\left(|H_{f}|+|\overline{\nabla}f|\right)e^{-f}d\mathrm{vol}_{\Sigma}\right]. 
\end{eqnarray*}
Finally, we integrate over $(0,+\infty)$ with respect to $s$ and let $\varepsilon\to 0$. The desired inequality \eqref{WeightSobL1} follows noting that
\begin{eqnarray*}
\int_{0}^{+\infty}s^{\frac{1}{m-1}}\mathrm{vol}_{f}(\Sigma_s)dt&=&\int_{0}^{+\infty}s^{\frac{1}{m-1}}\left(\int_{\Sigma_{s}}e^{-f}d\mathrm{vol}_{\Sigma}\right)ds\\
&=&\frac{m-1}{m}\int_{\Sigma}\left[\int_{0}^{h}\frac{m}{m-1}s^{\frac{1}{m-1}}ds\right]e^{-f}d\mathrm{vol}_{\Sigma}\\
&=&\frac{m-1}{m}\int_{\Sigma}h^{\frac{m}{m-1}}e^{-f}d\mathrm{vol}_{\Sigma},\\
\int_{0}^{+\infty}\lambda(h-s)ds&\leq& h+\varepsilon,\\
\int_{0}^{+\infty}\lambda^{\prime}(h-s)ds&\leq&1.
\end{eqnarray*}
\end{proof}

\section{Topological results}\label{TopoRes}

By Gauss equation it is not difficult to see that, given an $f$--minimal hypersurface
$x:\Sigma^m\rightarrow M_f^{m+1}$, the Bakry--\'Emery Ricci tensor of $\Sigma$ satisfies
\begin{equation}\label{Ricfrelation}
\mathrm{Ric}_f(X,X)=\overline{\mathrm{Ric}}_f(X,X)-\overline{\mathrm{Sect}}(X,\nu)|X|^2-\langle A^2 X, X\rangle,  
\end{equation}
for any $X \in T\Sigma$. Assume now that $\overline{\mathrm{Sect}}\leq0$ and $\overline{\mathrm{Ric}}_f\geq k$. Then
\begin{equation}\label{boundricf}
\ \mathrm{Ric}_{f}\geq k-|\mathbf{A}|^2, 
\end{equation}
and, if $Ind_{f}(\Sigma)<+\infty$ and $k\geq 0$, we obtain that there exists a solution $\varphi>0$ of the differential inequality
\[
\ \Delta_f \varphi+a(x)\varphi\leq 0,
\]
weakly outside a compact set, where $a(x)=|\mathbf{A}|^2-k$. Hence the assumptions in Corollary \ref{finnonfparends} are met and we can conclude that $\Sigma$ has at most finitely many non--$f$--parabolic ends. Applying Theorem \ref{ThWeightSobL1}, we can now get the following
\begin{theorem}\label{mainThTopo}
Let $\Sigma^m$ be a complete $f$--minimal hypersurface isometrically immersed with $Ind_f(\Sigma)<+\infty$ in a 
complete weighted manifold $M_{f}^{m+1}$ with 
$\overline{\mathrm{Sect}}\leq0$ and $\overline{\mathrm{Ric}}_{f}\geq k\geq0$. Suppose furthermore 
that $f\leq f^{*}<+\infty$ and $|\overline{\nabla}f|\in L^{m}(\Sigma_f)$. 
Then $\Sigma$ has finitely many ends.
\end{theorem}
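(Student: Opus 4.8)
The plan is to upgrade the weighted Sobolev inequality of Theorem \ref{ThWeightSobL1} into a genuine weighted $L^1$--Sobolev inequality of the form \eqref{f-SobL1} valid outside a compact set, and then to combine Remark \ref{SobL1InfVolEnds} with the finiteness of non--$f$--parabolic ends established in the discussion preceding the statement.

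First I would use that $\Sigma$ is $f$--minimal, so $H_f\equiv0$ and the $|H_f|$ term in \eqref{WeightSobL1} disappears. Since $f\leq f^*<+\infty$ guarantees \eqref{smallweightgeom}, Theorem \ref{ThWeightSobL1} gives, for every $0\leq h\in C^1_c(\Sigma)$,
\[
\left[\int_{\Sigma}h^{\frac{m}{m-1}}e^{-f}d\mathrm{vol}_{\Sigma}\right]^{\frac{m-1}{m}}\leq C\int_{\Sigma}|\nabla h|\,e^{-f}d\mathrm{vol}_{\Sigma}+C\int_{\Sigma}h\,|\overline{\nabla}f|\,e^{-f}d\mathrm{vol}_{\Sigma}.
\]
I would then estimate the last term by H\"older's inequality with conjugate exponents $\frac{m}{m-1}$ and $m$, writing $e^{-f}=e^{-\frac{m-1}{m}f}e^{-\frac{1}{m}f}$, to get
\[
\int_{\Sigma}h\,|\overline{\nabla}f|\,e^{-f}d\mathrm{vol}_{\Sigma}\leq\left(\int_{\Sigma}h^{\frac{m}{m-1}}e^{-f}d\mathrm{vol}_{\Sigma}\right)^{\frac{m-1}{m}}\left(\int_{\mathrm{supp}\,h}|\overline{\nabla}f|^{m}e^{-f}d\mathrm{vol}_{\Sigma}\right)^{\frac{1}{m}}.
\]

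The crucial step is the absorption. Because $|\overline{\nabla}f|\in L^m(\Sigma, e^{-f}d\mathrm{vol}_\Sigma)$, the tail $\int_{\Sigma\setminus K}|\overline{\nabla}f|^m e^{-f}d\mathrm{vol}_\Sigma$ tends to $0$ as $K$ exhausts $\Sigma$; so I would fix a compact set $K$ with $C\big(\int_{\Sigma\setminus K}|\overline{\nabla}f|^m e^{-f}d\mathrm{vol}_\Sigma\big)^{1/m}\leq\frac12$. For $h$ supported in $\Sigma\setminus K$ the H\"older term is then at most $\frac12\big(\int_\Sigma h^{\frac{m}{m-1}}e^{-f}\big)^{\frac{m-1}{m}}$ and can be absorbed on the left, yielding
\[
\left[\int_{\Sigma}h^{\frac{m}{m-1}}e^{-f}d\mathrm{vol}_{\Sigma}\right]^{\frac{m-1}{m}}\leq 2C\int_{\Sigma}|\nabla h|\,e^{-f}d\mathrm{vol}_{\Sigma}
\]
for all $h\in C^1_c(\Sigma\setminus K)$. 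This is precisely \eqref{f-SobL1} with $\alpha=\frac{m}{m-1}$, so by Remark \ref{SobL1InfVolEnds} every end of $\Sigma$ with respect to $K$ is non--$f$--parabolic.

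To conclude, I would recall that the discussion before the statement already shows, via \eqref{boundricf} and the finite index hypothesis, that $\mathrm{Ric}_f\geq-(|\mathbf{A}|^2-k)$ and that $-\Delta_f-(|\mathbf{A}|^2-k)$ has finite Morse index, so Corollary \ref{finnonfparends} bounds the number of non--$f$--parabolic ends of $\Sigma$. Enlarging $K$ if necessary to contain the reference domain of the Corollary, and using that non--$f$--parabolicity is an intrinsic property of each end, every end of $\Sigma$ is non--$f$--parabolic while only finitely many such ends exist; hence $\Sigma$ has finitely many ends. I expect the absorption step to be the only real obstacle, together with the bookkeeping needed to match the two compact reference sets so that the two finiteness conclusions fit together.
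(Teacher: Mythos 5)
Your proposal is correct and follows essentially the same route as the paper's own proof: kill the $|H_f|$ term by $f$--minimality, absorb the $h|\overline{\nabla}f|$ term via H\"older and the smallness of $\|\overline{\nabla}f\|_{L^m(\Sigma\setminus K,\,e^{-f}d\mathrm{vol}_\Sigma)}$ to obtain the weighted $L^1$--Sobolev inequality outside a compact set, and then combine Remark \ref{SobL1InfVolEnds} with Corollary \ref{finnonfparends}. Your extra care about the absorption constant and about reconciling the two compact reference sets only makes explicit what the paper leaves implicit.
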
 
\begin{proof}
By Theorem \ref{ThWeightSobL1} and using the $f$--minimality, we have that for every $0\leq h\in C_{c}^{\infty}(\Sigma)$
\begin{equation*}
\left[\int_{\Sigma}h^{\frac{m}{m-1}}e^{-f}d\mathrm{vol}_{\Sigma}\right]^{\frac{m-1}{m}}\leq C\left[\int_{\Sigma}|\nabla h|+h|\overline{\nabla}f|e^{-f}d\mathrm{vol}_{\Sigma}\right].
\end{equation*}
Since we are assuming that $|\overline{\nabla}f|\in L^{m}(\Sigma_f)$, for a suitable compact $K$ we can suppose that
\[
\ \left\|\overline{\nabla}f\right\|_{L^{m}(\Sigma\setminus K,\, e^{-f}d\mathrm{vol}_{\Sigma})}< C^{-1}.
\]
Then, applying the H\"older inequality, the term involving $\theta$ can be absorbed in the left--hand side, showing that the $L^1$--Sobolev inequality
\begin{equation*}
\left[\int_{\Sigma}h^{\frac{m}{m-1}}e^{-f}d\mathrm{vol}_{\Sigma}\right]^{\frac{m-1}{m}}\leq D\left[\int_{\Sigma}|\nabla h|e^{-f}d\mathrm{vol}_{\Sigma}\right]
\end{equation*}
holds for every smooth non-negative function compactly supported in $\Sigma\setminus K$ and some constant $D>0$. By Remark \ref{SobL1InfVolEnds} we hence conclude the proof.
\end{proof}

In the discussion just above Theorem \ref{mainThTopo} we needed the hypothesis on $Ind_f(\Sigma)$, jointly with $\overline{\mathrm{Ric}}_f\geq k\geq 0$ in order to guarantee the finiteness of the Morse index of the operator
$-\Delta_f-(|\mathbf{A}|^2-k)$. Note that, on the other hand, in case $k\geq0$ we have even that $\mathrm{Ric}_{f}\geq-|\mathbf{A}|^2$.
To apply Corollary \ref{finnonfparends} it thus suffices to guarantee the finiteness of the Morse index of the operator $L_\mathbf{A}=-\Delta_f-|\mathbf{A}|^2$. In particular, adapting ideas in \cite{LiYauCMP}, we are going to show that this can be done assuming the finiteness of weighted total curvature.

Slightly adapting the proof in \cite{LiYauCMP}, it is easy to obtain the following weighted version of Theorem 2 in \cite{LiYauCMP}.
\begin{lemma}\label{LemFI}
Let $\Sigma^m$, $m\geq 3$, be a complete non--compact Riemannian manifold enjoing the $L^2$--weighted Sobolev inequality
\begin{equation}\label{L2Sobo} \left(\int_{\Sigma}h^{\frac{2m}{m-2}}e^{-f}d\mathrm{vol}_{\Sigma}\right)^{\frac{m-2}{m}}\leq C(m)\left(\int_\Sigma|\nabla h|^2e^{-f}d\mathrm{vol}_{\Sigma}\right)\quad\forall\,h\in C_{c}^{\infty}(\Sigma).
\end{equation}
Let $D\subseteq\Sigma$ be a bounded domain. Suppose $q(x)$ is a positive function defined on $D$ and let $\mu_{k}$ be the $k^{th}$ eigenvalue for
\begin{equation*}
\begin{cases}
\Delta_f\psi(x)=-\mu q(x)\psi(x)&\mathrm{on}\,\,D\\
\left.\psi\right|_{\partial D}\equiv0&
\end{cases}
\end{equation*}
Then
\[
\ \mu_{k}^{\frac{m}{2}}\int_{D}q^{\frac{m}{2}}e^{-f}d\mathrm{vol}_{\Sigma}\geq k \tilde{C}(m).
\]
\end{lemma}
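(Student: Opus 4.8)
The plan is to recognize the stated inequality as a weighted Cwikel--Lieb--Rozenblum bound on the number of bound states and to reproduce the heat--kernel argument of \cite{LiYauCMP} with $d\mathrm{vol}_{\Sigma}$ replaced by $e^{-f}d\mathrm{vol}_{\Sigma}$ and $\Delta$ replaced by $\Delta_f$. First I would normalize the eigenfunctions $\psi_1,\dots,\psi_k$ attached to $\mu_1\le\cdots\le\mu_k$ so that $\int_D q\,\psi_i\psi_j\,e^{-f}d\mathrm{vol}_{\Sigma}=\delta_{ij}$, which after integrating by parts gives $\int_D|\nabla\psi_i|^2e^{-f}d\mathrm{vol}_{\Sigma}=\mu_i$. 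The whole statement then reduces to the counting estimate
\[
N(\Lambda):=\#\{i:\mu_i\le\Lambda\}\le C(m)\,\Lambda^{\frac m2}\int_D q^{\frac m2}e^{-f}d\mathrm{vol}_{\Sigma},
\]
since once this is known one sets $\Lambda=\mu_k$, observes that $k\le N(\mu_k)$ (at least $k$ eigenvalues do not exceed $\mu_k$), and rearranges with $\tilde C(m)=C(m)^{-1}$. The point to exploit is that $N(\Lambda)$ equals the number of non-positive eigenvalues of the weighted Schr\"odinger operator $-\Delta_f-\Lambda q$ on $D$ with Dirichlet conditions, because $\mu_i\le\Lambda$ holds precisely when the Rayleigh quotient $\big(\int_D|\nabla\psi|^2e^{-f}\big)\big/\big(\int_D q\psi^2e^{-f}\big)$ reaches $\Lambda$.

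The single geometric input needed from the hypothesis is an on--diagonal bound for the Dirichlet $f$--heat kernel. I would derive from the weighted $L^2$--Sobolev inequality \eqref{L2Sobo} the associated weighted Nash inequality and, through the standard Davies--Varopoulos ultracontractivity argument (which uses only the semigroup property together with Nash, and is insensitive to the replacement of $d\mathrm{vol}_{\Sigma}$ by $e^{-f}d\mathrm{vol}_{\Sigma}$), the estimate $p_f(t,x,y)\le C(m)\,t^{-m/2}$ for the kernel $p_f$ of $e^{t\Delta_f}$ taken with respect to the weighted measure. This is the only place where the Sobolev constant $C(m)$ is used.

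With this in hand the counting estimate follows Li--Yau almost verbatim. Writing $V=\Lambda q\ge0$ and denoting by $\nu_i$ the eigenvalues of $-\Delta_f-V$, for every $t>0$ one has $N(\Lambda)=\#\{i:\nu_i\le0\}\le\sum_{\nu_i\le0}e^{-t\nu_i}\le\mathrm{Tr}\,e^{-t(-\Delta_f-V)}$, and by the Golden--Thompson inequality this last trace is bounded by $\mathrm{Tr}\big(e^{t\Delta_f}e^{tV}\big)=\int_D p_f(t,x,x)\,e^{tV(x)}e^{-f}d\mathrm{vol}_{\Sigma}\le C(m)\,t^{-m/2}\int_D e^{tV}e^{-f}d\mathrm{vol}_{\Sigma}$. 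The main obstacle, and the only step that is not pure bookkeeping, is the passage from this exponential bound to the power $\int_D V^{m/2}e^{-f}d\mathrm{vol}_{\Sigma}$: I expect to carry it out exactly as in \cite{LiYauCMP}, by optimizing (or integrating) over the free parameter $t$, and it is here that both the exponent $m/2$ and the dimensional constant $\tilde C(m)$ are produced. Everything else---the orthonormalization, the identification of $N(\Lambda)$ with the bound--state count, and the ultracontractivity estimate---transfers to the weighted setting with no change beyond inserting the density $e^{-f}$ into each integral, which is why the adaptation is routine once the heat--kernel bound is available.
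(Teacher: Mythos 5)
Your reduction steps are all sound: the normalization of the $\psi_i$, the inequality $k\le N(\mu_k)$, the identification of $N(\Lambda)$ with the number of non--positive Dirichlet eigenvalues of $-\Delta_f-\Lambda q$, and the ultracontractive bound $p_f(t,x,y)\le C(m)t^{-m/2}$ extracted from \eqref{L2Sobo} via Nash's argument all transfer to the weighted setting exactly as you say. The gap is the final step, the one you yourself flag as ``the main obstacle'': Golden--Thompson gives $N(\Lambda)\le C(m)\,t^{-m/2}\int_D e^{t\Lambda q}e^{-f}d\mathrm{vol}_\Sigma$, and no optimization (nor integration) in $t$ of this particular bound produces $C\Lambda^{m/2}\int_D q^{m/2}e^{-f}d\mathrm{vol}_\Sigma$. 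Pointwise $e^{tV}\ge (tV)^{m/2}/\Gamma(\tfrac m2+1)$, so $t^{-m/2}\int_D e^{tV}$ always dominates $c_m\int_D V^{m/2}$ \emph{plus} a term $t^{-m/2}\mathrm{vol}_f(\{V=0\})$; and for a spike potential ($V=M$ on a set of weighted measure $M^{-m/2}$ inside a domain of weighted measure $1$, so that $\int_D V^{m/2}e^{-f}=1$) one checks that $\inf_{t>0}t^{-m/2}\int_D e^{tV}e^{-f}\sim (M/\log M)^{m/2}\to+\infty$. So the naive Golden--Thompson route provably cannot yield a CLR--type (zeroth moment) bound; it only gives Lieb--Thirring bounds for positive moments. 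Your appeal to \cite{LiYauCMP} for this step is a misattribution: Li and Yau never form the quantity $\int e^{tV}$.

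The argument the paper intends to transplant from \cite{LiYauCMP} works directly with the eigenvalue problem $\Delta_f\psi=-\mu q\psi$, with no detour through $-\Delta_f-\Lambda q$. One introduces the heat kernel $H(x,y,t)=\sum_i e^{-\mu_i t}\psi_i(x)\psi_i(y)$ of the degenerate equation $q\,\partial_t u=\Delta_f u$, which is symmetric with respect to the measure $q\,e^{-f}d\mathrm{vol}_\Sigma$ and satisfies $\int_D H(x,y,t)\,q\,e^{-f}\le 1$; the Sobolev inequality \eqref{L2Sobo}, fed into Li--Yau's Nash/Moser iteration with $e^{-f}$ inserted in every integral, yields the trace bound $\sum_i e^{-\mu_i t}\le C(m)\,t^{-m/2}\int_D q^{m/2}e^{-f}d\mathrm{vol}_\Sigma$, and the lemma follows from $k\,e^{-\mu_k t}\le\sum_i e^{-\mu_i t}$ with $t=m/(2\mu_k)$. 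If you prefer to keep your Birman--Schwinger reduction, you must replace Golden--Thompson by a genuine CLR mechanism (Lieb's integral formula over $t$ for the Birman--Schwinger operator, or the Levin--Solomyak/Rozenblum derivation of CLR from ultracontractivity); either way this step is substantially more than bookkeeping and is currently missing.
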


Using the same idea as in \cite{LiYauCMP}, we can prove the following
\begin{proposition}\label{PropFI}
Let $\Sigma^m\to M^{m+1}_{f}$, $m\geq3$, be a complete isometrically immersed hypersurface enjoying the $L^2$--weighted Sobolev inequality \eqref{L2Sobo}. Set $L_\mathbf{A}=-\Delta_f-|\mathbf{A}|^2$. Then
\[
\ Ind^{L_{\mathbf{A}}}(\Sigma)\leq \tilde{C}(m)\int_{\sigma}|\mathbf{A}|^{m}e^{-f}d\mathrm{vol}_{\Sigma}.
\]
\end{proposition}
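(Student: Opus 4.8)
The plan is to reduce the estimate on $\mathrm{Ind}^{L_{\mathbf{A}}}(\Sigma)$ to a single application of Lemma \ref{LemFI} with the weight $q=|\mathbf{A}|^2$, exactly mirroring the non--weighted argument of \cite{LiYauCMP}. First I would recall that, by definition, $\mathrm{Ind}^{L_{\mathbf{A}}}(\Sigma)=\sup_{\Omega\subset\subset\Sigma}\mathrm{ind}^{L_{\mathbf{A}}}(\Omega)$, so it suffices to bound $\mathrm{ind}^{L_{\mathbf{A}}}(\Omega)$ for an arbitrary relatively compact $\Omega$ by $\tilde C(m)\int_{\Omega}|\mathbf{A}|^m e^{-f}d\mathrm{vol}_\Sigma$ and then pass to the supremum over $\Omega\subset\subset\Sigma$. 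On such an $\Omega$, the index $\mathrm{ind}^{L_{\mathbf{A}}}(\Omega)$ is the maximal dimension of a subspace $W\subset C_c^\infty(\Omega)$ on which the quadratic form
\[
Q(\psi)=\int_\Omega\left(|\nabla\psi|^2-|\mathbf{A}|^2\psi^2\right)e^{-f}d\mathrm{vol}_\Sigma
\]
is negative definite.

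Second, I would match this index with the spectrum of the weighted eigenvalue problem in Lemma \ref{LemFI}. Taking $q=|\mathbf{A}|^2$, the eigenvalues $\mu_1\le\mu_2\le\cdots$ of $\Delta_f\psi=-\mu|\mathbf{A}|^2\psi$ on $\Omega$ with $\psi|_{\partial\Omega}=0$ admit the Courant--Fischer characterization
\[
\mu_k=\min_{\dim V=k}\ \max_{0\neq\psi\in V}\ \frac{\int_\Omega|\nabla\psi|^2e^{-f}d\mathrm{vol}_\Sigma}{\int_\Omega|\mathbf{A}|^2\psi^2e^{-f}d\mathrm{vol}_\Sigma}.
\]
Since $Q(\psi)<0$ is equivalent to the Rayleigh quotient of $\psi$ above being $<1$ (and forces $\int_\Omega|\mathbf{A}|^2\psi^2e^{-f}>0$), a standard min--max comparison shows that $\mathrm{ind}^{L_{\mathbf{A}}}(\Omega)$ equals the number of eigenvalues $\mu_k$ that are strictly smaller than $1$. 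In particular, writing $N=\mathrm{ind}^{L_{\mathbf{A}}}(\Omega)$, one has $\mu_N<1$.

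Third, I would insert $k=N$ into Lemma \ref{LemFI}, whose hypothesis \eqref{L2Sobo} is granted here, to obtain $\mu_N^{m/2}\int_\Omega|\mathbf{A}|^m e^{-f}d\mathrm{vol}_\Sigma\ge N\,\tilde C(m)$. As $\mu_N<1$ gives $\mu_N^{m/2}<1$, this yields $N\le \tilde C(m)^{-1}\int_\Omega|\mathbf{A}|^m e^{-f}d\mathrm{vol}_\Sigma\le \tilde C(m)^{-1}\int_\Sigma|\mathbf{A}|^m e^{-f}d\mathrm{vol}_\Sigma$, and taking the supremum over $\Omega\subset\subset\Sigma$ proves the claim after relabelling the constant $\tilde C(m)^{-1}$ as $\tilde C(m)$.

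The hard part will be the degeneracy of the weight $q=|\mathbf{A}|^2$, because Lemma \ref{LemFI} requires $q$ to be \emph{positive} on the domain whereas $|\mathbf{A}|^2$ may vanish on $\Sigma$. I would dispose of this by restricting to the open set $U=\{|\mathbf{A}|^2>0\}$: any $\psi$ with $Q(\psi)<0$ satisfies $\int_\Omega|\mathbf{A}|^2\psi^2e^{-f}>0$ and hence cannot be supported off $U$, so the negative subspace $W$ may be taken inside $C_c^\infty(\Omega\cap U)$ without altering the index, and the weighted eigenvalue problem becomes non--degenerate there. Alternatively one replaces $q$ by $q_\varepsilon=|\mathbf{A}|^2+\varepsilon$, runs the argument, and lets $\varepsilon\to0^+$, using that the number of $\mu_k(q_\varepsilon)<1$ is non--increasing in $\varepsilon$ and stabilizes to the index. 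The degenerate case $|\mathbf{A}|\equiv0$ is trivial, since then $\Sigma$ is totally geodesic, $L_{\mathbf{A}}=-\Delta_f\ge0$ has index zero, and both sides vanish.
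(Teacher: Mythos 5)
Your proposal follows essentially the same route as the paper: reduce to a bounded domain, identify $\mathrm{ind}^{L_{\mathbf{A}}}(\Omega)$ with the number of eigenvalues of the weighted problem $\Delta_f\psi=-\mu|\mathbf{A}|^2\psi$ not exceeding $1$, and apply Lemma \ref{LemFI} with $q=|\mathbf{A}|^2$ (the constant being relabelled). Your extra care about the possible vanishing of $|\mathbf{A}|^2$ (restriction to $\{|\mathbf{A}|>0\}$ or the $\varepsilon$--regularization) is a sensible addition that the paper leaves implicit, but it does not change the argument.
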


\begin{proof}
Up to taking an exhaustion of $\Sigma$ by compact domains $\left\{\Omega_i\right\}_{i=1}^{\infty}$, it suffices to show that 
\[
\ Ind^{L_{\mathbf{A}}}(\Omega)\leq \tilde{C}(m)\int_{\Omega}|\mathbf{A}|^{m}e^{-f}d\mathrm{vol}_{\Sigma}
\]
for any given domain $\Omega\subseteq\Sigma$. On the other hand,  consider the eigenvalue problem
\begin{equation}\label{feigen}
\begin{cases}
\Delta_f\psi=-\mu|\mathbf{A}|^2\psi\quad\mathrm{on}\quad\Omega\\
\left.\psi\right|_{\partial\Omega}\equiv0.
\end{cases}
\end{equation}
It is not difficult to prove that
\begin{equation}\label{indLA}
Ind^{L_\mathbf{A}}(\Omega)=\sharp\left\{\mu_{k}\leq 1\,\,|\,\, \mu_{k}\mathrm{\,\,is\,\,an\,\,eigenvalue\,\,of\,\,} \eqref{feigen}\right\}.
\end{equation}
Indeed this follows from the identity
\[
\ \frac{\int\left(|\nabla \psi|^2-|\mathbf{A}|^2\psi^2\right)e^{-f}d\mathrm{vol}_{\Sigma}}{\int \psi^2e^{-f}d\mathrm{vol}_{\Sigma}}=\frac{\int |\mathbf{A}|^2\psi^2e^{-f}d\mathrm{vol}_{\Sigma}}{\int \psi^2 e^{-f}d\mathrm{vol}_{\Sigma}}\left[\frac{\int |\nabla \psi|^2e^{-f}d\mathrm{vol}_{\Sigma}}{\int |\mathbf{A}|^2\psi^2e^{-f}d\mathrm{vol}_{\Sigma}}-1\right],
\]
observing that $$\frac{\int |\nabla \psi|^2e^{-f}d\mathrm{vol}_{\Sigma}}{\int |\mathbf{A}|^2\psi^2e^{-f}d\mathrm{vol}_{\Sigma}}$$ is the quadratic form associated to the operator $-\frac{\Delta_{f}}{|\mathbf{A}|^2}$. 
Hence, if $\mu_k$ is the greatest eigenvalue of \eqref{feigen} less then or equal to $1$, it follows by Lemma \ref{LemFI} that
\[
\ Ind^{L_{\mathbf{A}}}=k\leq \tilde{C}(m)\mu_{k}^{\frac{m}{2}}\int_{\Omega}|\mathbf{A}|^{m}e^{-f}d\mathrm{vol}_{\Sigma}\leq \tilde{C}(m)\int_{\Omega}|\mathbf{A}|^me^{-f}d\mathrm{vol}_{\Sigma}.
\]
\end{proof}
As a consequence of Proposition \ref{PropFI} we can now state the announced corollary of Theorem \ref{mainThTopo}.
\begin{corollary}\label{mainCoroTopo}
Let $\Sigma^m$ be a complete $f$--minimal hypersurface isometrically immersed in a complete weighted manifold $M_{f}^{m+1}$ with $\overline{\mathrm{Sect}}\leq0$ and $\overline{\mathrm{Ric}}_{f}\geq k\geq0$. Assume that $|\mathbf{A}|\in L^{m}(\Sigma_f)$. Suppose furthermore that $f\leq f^{*}<+\infty$ and $|\overline{\nabla}f|\in L^{m}(\Sigma_f)$. Then $\Sigma$ has finitely many ends.
\end{corollary}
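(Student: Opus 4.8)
The plan is to reduce the statement to Theorem \ref{mainThTopo} by trading the hypothesis $Ind_f(\Sigma)<+\infty$ for the finiteness of the weighted total curvature, the bridge being Proposition \ref{PropFI}. The driving observation is the one recorded just above the statement: since $\overline{\mathrm{Sect}}\leq0$ and $k\geq0$, the Gauss-type relation \eqref{boundricf} gives $\mathrm{Ric}_f\geq k-|\mathbf{A}|^2\geq-|\mathbf{A}|^2$, so the operator relevant to the weighted Li--Tam machinery is precisely $L_\mathbf{A}=-\Delta_f-|\mathbf{A}|^2$, i.e. the choice $a(x)=|\mathbf{A}|^2\geq0$.

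First I would reproduce, exactly as in the proof of Theorem \ref{mainThTopo}, the derivation of the weighted $L^1$--Sobolev inequality outside a compact set. The hypotheses $\overline{\mathrm{Sect}}\leq0$ and $f\leq f^*<+\infty$ make Theorem \ref{ThWeightSobL1} applicable (the bound on $f$ guaranteeing \eqref{smallweightgeom}); $f$--minimality annihilates the $|H_f|$ term; and $|\overline{\nabla}f|\in L^m$ together with H\"older's inequality allows one to absorb the gradient-of-weight term into the left-hand side over $\Sigma\setminus K$ for a sufficiently large compact $K$. By Remark \ref{SobL1InfVolEnds} this yields two things at once: every end of $\Sigma$ with respect to $K$ is non--$f$--parabolic, and, for $m\geq3$, the weighted $L^2$--Sobolev inequality \eqref{f-SobL2} with $\alpha=2/m$ holds outside $K$, which is exactly the inequality \eqref{L2Sobo} required by Proposition \ref{PropFI}.

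Next I would feed this Sobolev inequality into Proposition \ref{PropFI} to control the Morse index of $L_\mathbf{A}$. The assumption $|\mathbf{A}|\in L^m(\Sigma,e^{-f}d\mathrm{vol}_\Sigma)$ then gives $Ind^{L_\mathbf{A}}(\Sigma)\leq\tilde{C}(m)\int_\Sigma|\mathbf{A}|^m e^{-f}d\mathrm{vol}_\Sigma<+\infty$, so $L_\mathbf{A}$ has finite Morse index. Since $\mathrm{Ric}_f\geq-|\mathbf{A}|^2$, Corollary \ref{finnonfparends} applies with $a(x)=|\mathbf{A}|^2$ and shows that $\Sigma$ has at most finitely many non--$f$--parabolic ends. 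Combining this with the non--$f$--parabolicity of \emph{all} ends obtained in the previous step, $\Sigma$ has finitely many ends, which is the assertion.

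The main obstacle I anticipate is the bookkeeping between the global and the outside-a-compact-set versions of the Sobolev inequality. Proposition \ref{PropFI} (through Lemma \ref{LemFI}) is phrased for \eqref{L2Sobo} valid for all $h\in C_c^\infty(\Sigma)$, whereas the argument above only produces it on $\Sigma\setminus K$. I would resolve this by running the eigenvalue-counting argument of Proposition \ref{PropFI} on Dirichlet problems over domains $\Omega\subset\subset\Sigma\setminus K$, where the exterior Sobolev inequality does apply, thereby bounding $Ind^{L_\mathbf{A}}(\Sigma\setminus K)$ by $\tilde{C}(m)\int_{\Sigma\setminus K}|\mathbf{A}|^m e^{-f}d\mathrm{vol}_\Sigma<+\infty$; finiteness of the index outside $K$ then forces $\lambda_1^{L_\mathbf{A}}(\Sigma\setminus\Omega')\geq0$ for some larger compact $\Omega'$, whence the equivalence $\mathrm{(iii)}\Rightarrow\mathrm{(i)}$ in Proposition \ref{Baptiste} delivers the finiteness of the \emph{global} Morse index of $L_\mathbf{A}$ needed to invoke Corollary \ref{finnonfparends}. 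Alternatively one may patch the exterior inequality with a local Sobolev inequality near $K$ to recover \eqref{L2Sobo} globally at the cost of a larger constant. Either route is routine but should be stated with care, since the finite-index conclusion is precisely what triggers the Li--Tam step.
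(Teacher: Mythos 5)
Your proposal is correct and follows essentially the same route as the paper, which derives the exterior weighted $L^1$--Sobolev inequality as in Theorem \ref{mainThTopo}, invokes Proposition \ref{PropFI} with $|\mathbf{A}|\in L^{m}(\Sigma, e^{-f}d\mathrm{vol}_{\Sigma})$ to get finiteness of the Morse index of $L_{\mathbf{A}}=-\Delta_f-|\mathbf{A}|^2$, and then applies Corollary \ref{finnonfparends} together with the non--$f$--parabolicity of all ends. Your explicit handling of the mismatch between the exterior Sobolev inequality and the global one required by Proposition \ref{PropFI} (via Proposition \ref{Baptiste}) is a point the paper leaves implicit, and is resolved correctly.
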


\bigskip

\begin{acknowledgement*}
\rm{Part of this work was done while we were visiting the Institut Henri Poincar\'e, Paris. We would like to thank the institute for the warm hospitality.\\
Moreover we are deeply grateful to Stefano Pigola for useful conversations during the preparation of the manuscript. We would also like to thank Marcio Batista and Heudson Mirandola and the anonymous referee for useful comments.}
\end{acknowledgement*}

\bibliographystyle{amsplain}
\bibliography{bib_NotesFMin}
 
\end{document}